\newtheorem{theorem}{Theorem}[section]
\newtheorem{lemma}{Lemma}[section]
\newtheorem{corollary}{Corollary}[section]
\numberwithin{equation}{section}
\newcommand{\bj}{{\bf j}}
\newcommand{\bk}{{\bf k}}
\newcommand{\bh}{{\bf h}}
\newcommand{\bm}{{\bf m}}
\newcommand{\bn}{{\bf n}}
\newcommand{\br}{{\bf r}}
\newcommand{\bs}{{\bf s}}
\newcommand{\bt}{{\bf t}}
\newcommand{\bx}{{\bf x}}
\newcommand{\bX}{{\bf X}}
\newcommand{\by}{{\bf y}}
\newcommand{\bY}{{\bf Y}}
\newcommand{\bz}{{\bf z}}
\newcommand{\bM}{{\bf M}}
\newcommand{\rd}{{\rm d}}
\def\II{{\mathbb I}}
\def\ZZ{{\mathbb Z}}
\def\NN{{\mathbb N}}
\def\RR{{\mathbb R}}
\def\Pp{{\mathcal P}}
\def\Ff{{\mathcal F}}
\def\TT{{\mathbb T}}
\def\IId{{\mathbb I}^d}
\def\NNd{{\mathbb N}^d}
\def\RRd{{\mathbb R}^d}
\def\TTd{{\mathbb T}^d}
\def\ZZd{{\mathbb Z}^d}
\def\ZZdp{{\mathbb Z}^d_+}
\def\ZZdpe{{\mathbb Z}^d_+(e)}
\def\ZZdpu{{\mathbb Z}^d_+(u)}
\def\ZZdu{{\mathbb Z}^d(u)}
\def\Pp{{\mathcal P}}
\def\Ff{{\mathcal F}}
\def\Wrp{W^r_p}
\def\Urp{U^r_p}
\def\supp{\operatorname{supp}}
\title{\sffamily B-spline quasi-interpolation sampling representation 
and  sampling recovery in Sobolev spaces of mixed smoothness 
\author{ 
Dinh D\~ung \\[5mm]
Information Technology Institute, Vietnam National University \\
144 Xuan Thuy, Cau Giay, Hanoi, Vietnam\\
{\ttfamily dinhzung@gmail.com}\\[4mm]}}
\date{\ttfamily November 27, 2016 --  Version 0.9}
\begin{document}
\maketitle

\begin{abstract}
We proved direct and inverse theorems on B-spline quasi-interpolation sampling representation with a Littlewood-Paley-type norm equivalence in Sobolev spaces $W^r_p$ of mixed smoothness $r$, established estimates of the approximation error of recovery in $L_q$-norm of functions from the unit ball $U^r_p$ in the spaces  $W^r_p$ by linear sampling algorithms based on this representation, the asymptotic optimality of these sampling algorithms in terms of Smolyak sampling width $r^s_n(U^r_p, L_q)$ and sampling width $r_n(U^r_p, L_q)$.

\medskip
\noindent
{\bf Keywords and Phrases} Sampling width $\cdot$ Linear sampling algorithms  
$\cdot$ Smolyak grids $\cdot$  Sobolev spaces of mixed smoothness  $\cdot$ 
B-spline quasi-interpolation sampling representations $\cdot$ Littlewood-Paley-type theorem.

\medskip
\noindent
{\bf Mathematics Subject Classifications (2000)} \ 41A15  $\cdot$  41A05  $\cdot$
  41A25  $\cdot$  41A58 $\cdot$  41A63.
  
\end{abstract}

\section{Introduction}

The purpose of the present paper is to study B-spline quasi-interpolation sampling representations with Littlewood-Paley-type norm equivalence in Sobolev spaces of a mixed smoothness,  linear sampling algorithms on Smolyak grids based on them for recovery of functions from these spaces, and the optimality of these algorithms. Let us first briefly describe some main results. The historical comments will be given after.

We are interested in quasi-interpolation sampling representation and sampling recovery of functions on $\RRd$ which are $1$-periodic at each variable. It is convenient to consider them as functions defined in the $d$-torus $\TTd = [0,1]^d$ which is defined as the cross product of
$d$ copies of the interval $[0,1]$ with the identification of the end points. To avoid confusion, we use the notation $\IId$ to denote the standard unit $d$-cube $[0,1]^d$. 

For $0 < p \le \infty$ and $r > 0$, denote by $\Wrp$ the Sobolev-type space of functions on $\TTd$ having uniform mixed smoothness $r$. 
If $1 < p < \infty$ and $\max(\frac{1}{p},\frac{1}{2}) < r < \ell - 1$, then we prove that
every function $f \in \Wrp$ can be represented as B-spline series 
\begin{equation} \label{introduction[B-splineRepresentation]}
f \ = \sum_{\bk \in {\ZZ}^d_+} \ q_\bk(f) = 
\sum_{\bk \in {\ZZ}^d_+} \sum_{\bs \in I(\bk)} c_{\bk,\bs}(f)N_{\bk,\bs}, 
\end{equation}  
converging in the norm of $\Wrp$, where the coefficient functionals $c_{\bk,\bs}(f)$ are explicitly constructed as
linear combinations of at most $m_0$ of function
values of $f$ for some $m_0 \in \NN$ which is independent of $\bk,\bs$ and $f$, $N_{\bk,\bs}$ are the tensor product of 
integer translates of dyadic scaled periodic B-splines of even order $\ell$ (see Subsection \ref{B-spline Q-I} for details), and 
\begin{equation}  \label{I(bk)}
I(\bk) := \{\bs \in \ZZdp: s_j = 0,1,..., 2^{k_j} - 1, \ j \in [d]\}.
\end{equation}
Moreover, for this representation there holds the norm equivalence
\begin{equation} \label{introduction[normeqWrp]}
\biggl\| \biggl(\sum_{\bk \in \ZZdp} 
\Big|2^{r|\bk|_1} q_\bk(f)\Big|^2 \biggl)^{1/2}\biggl\|_p
\ \asymp \
\|f\|_{\Wrp},
\quad  \forall f \in \Wrp. 
\end{equation}

Let $\bX_n = \{\bx^j\}_{j=1}^n$ be a set of $n$ points 
in $\TTd$, $\Phi_n = \{\varphi_j\}_{j =1}^n$ a family of  
$n$ functions on ${\II}^d$. If  $f$ is a function on $\TTd$, for approximately recovering $f$ from the sampled values $f(\bx^1),..., f(\bx^n)$, we  define the linear sampling algorithm $L_n(X_n,\Phi_n,\cdot)$  by 
\begin{equation} \label{def[L_n]}
L_n(\bX_n,\Phi_n,f) 
:= \ \sum_{j=1}^n f(\bx^j)\varphi_j.
\end{equation}

Based on the B-spline quasi-interpolation representation \eqref{introduction[B-splineRepresentation]}, we construct linear sampling algorithms on  Smolyak grids induced by partial sums of the series in \eqref{introduction[B-splineRepresentation]} as follows.
For $m \in \NN$, the well known periodic Smolyak grid of points $G^d(m) \subset \TTd$ is defined as
\begin{equation}  \nonumber
G^d(m)
:= \
\{\by = 2^{-\bk} \bs: \bk \in \NNd, \ |\bk|_1 = m, \ \bs \in I(\bk)\}.
\end{equation}
Here and in what follows, we use the notations: 
$\ZZ_+:= \{s \in \ZZ: s \ge 0 \}$; $\bx \by := (x_1y_1,..., x_dy_d)$; 
$2^\bx := (2^{x_1},...,2^{x_d})$;
$|\bx|_1 := \sum_{i=1}^d |x_i|$ for $\bx, \by \in {\RR}^d$;
$[n]$ denotes the set of all natural numbers from $1$ to $n$; 
$x_i$ denotes the $i$th coordinate of $\bx \in \RR^d$, i.e., $\bx := (x_1,..., x_d)$. 
For $m \in {\ZZ}_+$, we define the operator $R_m$ by 
\begin{equation*}
R_m(f) 
:= \ 
\sum_{|\bk|_1 \le m} q_\bk(f)
\ = \
\sum_{|\bk|_1 \le m} \ \sum_{\bs \in I(\bk)} c_{\bk,\bs}(f)\, N_{\bk,\bs}.
\end{equation*} 
For functions $f$ on $\TTd$, $R_m$ defines the linear sampling algorithm 
on the Smolyak grid $G^d(m)$ 
\begin{equation*} 
R_m(f) 
\ = \ 
L_n(\bY_n,\Phi_n,f) 
\ = \ 
\sum_{\by \in G^d(m)} f(\by) \psi_{\by}, 
\end{equation*} 
where $n := \ |G^d(m)|$, $\bY_n:= \{\by \in G^d(m)\}$, $\Phi_n:= \{\varphi_{\by}\}_{\by \in G^d(m)}$  and for 
$\by =  2^{-\bk} \bs$,  $\varphi_\by$ are explicitly constructed as linear combinations of at most 
at most $m_0$ B-splines $N_{\bk,\bj}$ for some $m_0 \in \NN$ which is independent of $\bk,\bs,m$ and $f$.  

Let $1 < p,q < \infty$ and $\max(\frac{1}{p},\frac{1}{2}) < r < \ell$.
Then by using the B-spline quasi-interpolation representation \eqref{introduction[B-splineRepresentation]} with norm equivalence \eqref{introduction[normeqWrp]} we prove that
\begin{equation} \label{Intr[|f -  R_m(f)|_p<]}
\big\|f -  R_m(f) \big\|_q
\ \asymp \
\|f\|_{\Wrp} \times
\begin{cases}
2^{-rm} m^{(d-1)/2}, \ & p \ge q, \\
2^{-(r - 1/p + 1/q)m} \, \ & p < q,
\end{cases}
\quad  \forall f \in \Wrp. 
\end{equation}

Let us introduce the Smolyak sampling width $r^s_n(F)_q$ characterizing  optimality of sampling recovery  on Smolyak grids $G^d(m)$ with respect to the function class $F$ by
\begin{equation} \label{def:r^s_n}
r^s_n(F,L_q)
\ := \ \inf_{|G^d(m)| \le n, \, \Phi_m} \  \sup_{f \in F} \, \|f - S_m(\Phi_m,f)\|_q,
\end{equation}
where for a family $\Phi_m = \{\varphi_\by\}_{\by \in G^d(m)}$ of functions we define the linear sampling algorithm
$S_m(\Phi_m,\cdot)$ on Smolyak grids $G^d(m)$ by
\begin{equation*}
S_m(\Phi_m,f)
\ = \
\sum_{\by \in G^d(m)} f(\by) \varphi_\by.
\end{equation*}
The upper index $s$ indicates that we restrict to Smolyak grids here.
 Let $1 < p,q < \infty$ and $r > \max(\frac{1}{p},\frac{1}{2})$.
Denote by $\Urp$ the unit ball in the space $\Wrp$. Then we prove the asymptotic order
\begin{equation} \nonumber
r_n^s(\Urp,L_q) 
\ \asymp \
\begin{cases}
\biggl(\frac{(\log n)^{d-1}}{n}\biggl)^{r} (\log n)^{(d-1)/2}, \ & p \ge q, \\[1.5ex]
\biggl(\frac{(\log n)^{d-1}}{n}\biggl)^{(r-1/p+1/q)},  \, \ & p < q.
\end{cases}
\end{equation}
The upper bound follows from \eqref{Intr[|f -  R_m(f)|_p<]}, while the lower bound is established by construction of 
test functions which is based in the inverse theorem of sampling representation 
(see Theorem \ref{InverseThm} below). Moreover, the linear sampling algorithms $R_m(\cdot)$ on the Smolyak grid $G^d(m)$  for which 
$n := \ |G^d(m)|$, are asymptotically optimal for $r_n^s(\Urp,L_q) $.  

To study  optimality of linear sampling algorithms 
of the form \eqref{def[L_n]} for 
recovering $f \in F$ from $n$ of their values, we can use also the sampling width
\begin{equation} \nonumber 
r_n(F,L_q) 
\ := \ \inf_{\bX_n, \Phi_n} \  \sup_{f \in \Urp} \, \|f - L_n(\bX_n,\Phi_n,f)\|_q.
\end{equation}
For $1 < p < q \le 2$ or $2\le p < q < \infty$ and $r > \max(\frac{1}{p},\frac{1}{2})$, 
as a consequence of \eqref{Intr[|f -  R_m(f)|_p<]} and a result on linear width proven 
in  \cite{Ga87} we obtain the asymptotic order
\begin{equation}  \label{introduction[r_n,p<q]}
r_n(\Urp,L_q) 
\ \asymp \
\biggl(\frac{(\log n)^{d-1}}{n}\biggl)^{r-1/p + 1/q}.
\end{equation}

The sparse grids $G^d(m)$ for sampling recovery and numerical integration were first considered by Smolyak 
\cite{Sm63}. In \cite{Te85}--\cite{Te93b} and  
\cite{Di90}--\cite{Di92} Smolyak's construction was developed for studying the trigonometric sampling recovery and sampling width for periodic Sobolev classes  and Nikol'skii classes  having mixed smoothness. Recently, 
the sampling recovery  for Sobolev and Besov classes  having mixed smoothness has been investigated in  
\cite{ByDuUl14, ByUl15, Di11,Di13,DU14,SU07,SU11,U08}. 
In particular, for non-periodic functions of mixed smoothness linear sampling algorithms on Smolyak grids have been recently studied in  \cite{Tr10} $(d=2)$, \cite{Di11, Di13,DU14,SU11} using B-spline quasi-interpolation sampling representation. For $1 \le p,q \le \infty$, $0 < \theta \le \infty$ and $r > 1/p$, the asymptotic order of the Smolyak sampling width $r^s_n(U^r_{p,\theta},L_q)$ has been established in \cite{Di11,DU14} where $U^r_{p,\theta}$ is the Besov class of uniform mixed smoothness $r$.
The first asymptotic order of sampling width 
$r_n(U^r_{p,\infty},L_q)$ for 
$ 1 < p < q \le 2, \ r > 1/p$,
among classes of mixed smoothness was obtained in \cite{Di90,Di91}. 
For Sobolev classes of mixed smoothness, the first asymptotic order of sampling width
$r_n(U^r_2,L_\infty)$ was obtained in \cite{Te93}. 

It is remarkable to notice that so far the asymptotic orders of the sampling widths $r_n(U^r_{p,\theta},L_q)$ and $r_n(U^r_p,L_q)$ are known only in some cases with the condition $p < q$, for which the sampling algorithms $R_m$ on the Smolyak grid $G^d(m)$ are asymptotically optimal. Even the asymptotic order of the "simplest" sampling widths $r_n(U^r_2,L_2)$ is still an outstanding open problem.

In numerical applications for approximation problems involving a large number of variables, Smolyak grids was first considered in
\cite{Ze91}. For non-periodic functions of mixed smoothness of integer order, linear sampling algorithms on Smolyak grids have been investigated in \cite{BG04} employing hierarchical Lagrangian polynomials multilevel basis. There is a very large number of papers on Smolyak grids and their modifications in various problems of approximations, sampling recovery and integration with applications in data mining, mathematical finance, learning theory, numerical solving of PDE and stochastic PDE, etc. to mention all of them. The reader can see the surveys in \cite{BG04, GN09, GeGr08} and the references therein.

Quasi-interpolation based on scaled B-splines with integer knots and constructed from function values at dyadic lattices, possesses good local and approximation properties for smooth functions, see \cite[p. 63--65]{BHR93}, 
\cite[p. 100-107]{C92}. It can be an efficient tool in some high-dimensional approximation problems, especially in applications ones. Thus, one of the important bases for sparse grid high-dimensional approximations having various applications, are the Faber functions (hat functions) which are piecewise linear B-splines of second order  
\cite{BG04,GeGr08, GHo10, GH13a, GH13b, GaHe09, BGGK13}.  The representation by Faber basis can be obtained by the B-spline quasi-interpolation (see, e. g., \cite{Di11}). 

A central role in sampling recovery of functions having a mixed smoothness, or more generally an anisotropic smoothness play sampling representations which are based on dyadic scaled B-splines with integer knots or trigonometric kernels and constructed from function values at dyadic lattices. These representations are in the form of a B-spline or trigonometric polynomial series provided with discrete equivalent norm for functions in H\"older-Nikol'skii- and Besov-types spaces of a mixed smoothness. By employing them we can construct sampling algorithms for recovery on Smolyak-type grids of functions from the corresponding spaces which in some cases give the asymptotically optimal rate of the approximation error. While the quasi-interpolation  and trigonometric sampling representation theorems are already established for  H\"older-Nikol'skii- and Besov-types spaces of a mixed smoothness 
\cite{Di91,Di01,Di11,Di13}, they are almost not formulated for Sobolev-type spaces of a mixed smoothness. Only a few particular cases are known for a small uniform mixed smoothness \cite{ByUl16,Tr10}.  The relations 
\eqref{introduction[B-splineRepresentation]} and \eqref{introduction[normeqWrp]}  present a Littlewood-Paley-type theorem on B-spline quasi-interpolation sampling representation for periodic Sobolev-type spaces of arbitrary uniform mixed smoothness.  Independently from the present paper, a trigonometric counterpart of this theorem as well sampling recovery based on it have been investigated in \cite{ByUl15}. 

Finally, we refer the reader to the survey \cite{DTU16} for various aspects, recent development and bibliography on sampling recovery of functions having mixed smoothness and related problems.
 
The paper is organized as follows. In Section  \ref{B-spline quasi-interpolation}, we present a B-spline quasi-interpolation sampling representation for continuous functions on $\TTd$, and prove an explicit formula for the coefficient functionals in this representation. In Section \ref{Littlewood-Paley-type theorems}, we prove direct and inverse Littlewood-Paley-type theorem for Sobolev spaces $\Wrp$.  In Section \ref{Sampling recovery}, for 
Sobolev classes $\Urp$, we investigate the sampling recovery in $L_q$-norm by linear sampling algorithms induced by partial sums of  B-spline quasi-interpolation sampling representation, optimality of sampling recovery on Smolyak grids and the asymptotic order of $r^s_n(\Urp,L_q)$ and $r_n(\Urp,L_q)$.

\section{B-spline quasi-interpolation sampling representations} 
 \label{B-spline quasi-interpolation}

\subsection{B-spline quasi-interpolations and sampling representations} 
\label{B-spline Q-I}

In order to construct B-spline quasi-interpolation sampling representations for continuous functions on $\TTd$, we preliminarily introduce quasi-interpolation operators for functions on $\RRd$. For a given natural number $\ell,$ denote by  $M_\ell$ the cardinal B-spline of order $\ell$ with support $[0,\ell]$ and 
knots at the points $0, 1,...,\ell$. 
We fixed an even number $\ell \in \NN$ and take the cardinal B-spline $M= M_{\ell}$ of order $\ell$.
Let $\Lambda = \{\lambda(s)\}_{|j| \le \mu}$ be a given finite even sequence, i.e., 
$\lambda(-j) = \lambda(j)$ for some $\mu \ge \frac{\ell}{2} - 1$. 
We define the linear operator $Q$ for functions $f$ on $\RR$ by  
\begin{equation} \label{def:Q}
Q(f,x):= \ \sum_{s \in \ZZ} \Lambda (f,s)M(x-s), 
\end{equation} 
where
\begin{equation} \label{def:Lambda}
\Lambda (f,s):= \ \sum_{|j| \le \mu} \lambda (j) f(s-j + \ell/2).
\end{equation}
The operator $Q$ is local and bounded in $C(\RR)$  (see \cite[p. 100--109]{C92}).
An operator $Q$ of the form \eqref{def:Q}--\eqref{def:Lambda} is called a 
{\it quasi-interpolation operator in} $C(\RR)$ if  it reproduces 
$\Pp_{\ell-1}$, i.e., $Q(f) = f$ for every $f \in \Pp_{\ell-1}$, where $\Pp_{\ell-1}$ denotes the set of $d$-variate polynomials of degree at most $\ell-1$ in each variable.

If $Q$ is a quasi-interpolation operator of the form 
\eqref{def:Q}--\eqref{def:Lambda}, for $h > 0$ and a function $f$ on $\RR$, 
we define the operator $Q(\cdot;h)$ by
$
Q(f;h) 
:= \ 
\sigma_h \circ Q \circ \sigma_{1/h}(f),
$
where $\sigma_h(f,x) = \ f(x/h)$.
Let $Q$ be a quasi-interpolation operator of the form \eqref{def:Q}--\eqref{def:Lambda} in $C({\RR}).$ 
If $k \in \ZZ_+ $, we introduce the operator $Q_k$  by  
\begin{equation*}
Q_k(f,x) := \ Q(f,x;h^{(k)}), \  x \in \RR, \quad h^{(k)}:= \ \ell^{-1}2^{-k}. 
\end{equation*}
We define the integer translated dilation $M_{k,s}$ of $M$ by   
\begin{equation*}
M_{k,s}(x):= \ M(\ell2^k x - s), \ k \in {\ZZ}_+, \ s \in \ZZ. 
\end{equation*}
Then we have for $k \in {\ZZ}_+$,
\begin{equation} \nonumber
Q_k(f)(x)  \ = \ 
\sum_{s \in \ZZ} a_{k,s}(f)M_{k,s}(x), \ \forall x \in \RR, 
\end{equation}
where the coefficient functional $a_{k,s}$ is defined by
\begin{equation} \label{def[a_{k,s}(f)]}
a_{k,s}(f):= \ \Lambda(f,s;h^{(k)}) 
= \   
\sum_{|j| \le \mu} \lambda (j) f(h^{(k)}(s-j+r)).
\end{equation}
Notice that $Q_k(f)$ can be written in the form:
\begin{equation} \label{[L_{k,s}-representation]}
Q_k(f)(x)  \ = \ 
\sum_{s \in \ZZ} f(h^{(k)}(s+r))L_k(x-s), \ \forall x \in \RR, 
\end{equation}
where the function $L_k$ is defined by
\begin{equation} \label{L_{k,s}}
L_k:= \  
= \   
\sum_{|j| \le \mu} \lambda (j) M_{k,j}.
\end{equation}
From \eqref{[L_{k,s}-representation]} and \eqref{L_{k,s}} we get for a function $f$ on $\RR$,
\begin{equation} \label{[|Q_bk(f)|<]}
\|Q_k(f)\|_{C(\RR)} 
\ \le \ 
\|L_\Lambda\|_{C(\RR)} \|f\|_{C(\RR)}
\ \le \ 
\|\Lambda \|\|f\|_{C(\RR)},    
\end{equation}
where
\begin{equation} \label{[L]}
L_\Lambda(x):= \  
= \   
\sum_{s \in \ZZ} \sum_{|j| \le \mu} \lambda (j) M(x-j-s),
\quad
\|\Lambda \|= \ \sum_{|j| \le \mu} |\lambda (j)|.
\end{equation} 
 
For $\bk \in \ZZdp$, let the mixed operator $Q_\bk$ be defined by
\begin{equation} \label{def:Mixed[Q_bk]} 
Q_\bk:= \prod_{i=1}^d  Q_{k_i},
\end{equation}
where the univariate operator
$Q_{k_i}$ is applied to the univariate function $f$ by considering $f$ as a 
function of  variable $x_i$ with the other variables held fixed.
We define the $d$-variable B-spline $M_{k,s}$ by
\begin{equation} \label{def:Mixed[M_{k,s}]}
M_{\bk,\bs}(\bx):=  \ \prod_{i=1}^d M_{k_i, s_i}( x_i),  
\ \bk \in {\ZZ}^d_+, \ \bs \in {\ZZ}^d,
\end{equation}
where ${\ZZ}^d_+:= \{\bs \in {\ZZ}^d: s_i \ge 0, \ i \in [d] \}$.
Then we have
\begin{equation*} 
Q_\bk(f,\bx)  \ = \ 
\sum_{s \in \ZZ^d} a_{\bk,\bs}(f)M_{\bk,\bs}(\bx), \quad \forall \bx \in \RRd, 
\end{equation*}
where $M_{\bk,\bs}$ is the mixed B-spline  defined in \eqref{def:Mixed[M_{k,s}]}, 
and
\begin{equation} \label{def:Mixed[a_{k,s}(f)]}
a_{\bk,\bs}(f) 
\ = \   
\biggl(\prod_{j=1}^d a_{k_j,s_j}\biggl)(f),
\end{equation}
and the univariate coefficient functional
$a_{k_i,s_i}$ is applied to the univariate function $f$ by considering $f$ as a 
function of  variable $x_i$ with the other variables held fixed. 

Since $M(\ell\,2^k x)=0$ for every $k \in \ZZ_+$ and $x \notin (0,1)$, we can extend  the univariate B-spline  
$M(\ell\,2^k\cdot)$ to an $1$-periodic function on the whole $\RR$. Denote this periodic extension by $N_k$ and define 
\begin{equation*}
N_{k,s}(x):= \ N_k(x - h^{(k)}s), \ k \in {\ZZ}_+, \ s \in I(k), 
\end{equation*}
where $I(k) := \{0,1,..., \ell2^k - 1\}$.
We define the $d$-variable B-spline $N_{\bk,\bs}$ by
\begin{equation} \nonumber
N_{\bk,\bs}(\bx):=  \ \prod_{i=1}^d N_{k_i, s_i}( x_i),  \ \bk \in {\ZZ}^d_+, \ \bs \in I(\bk),
\end{equation}
where $ I(\bk):=\prod_{i=1}^d I(k_i)$.
Then we have for functions $f$ on $\TTd$,
\begin{equation} \label{def[periodicQI]}
Q_\bk(f,\bx)  \ = \ 
\sum_{\bs \in I(\bk)} a_{\bk,\bs}(f) N_{\bk,\bs}(x), \quad \forall x \in \TTd. 
\end{equation}
Since the function $L_\Lambda$ defined in \eqref{[L]} is $1$-periodic, from \eqref{[|Q_bk(f)|<]} it follows that for a function $f$ on $\TT$,
\begin{equation} \label{[|Q_k(f)|<(T)]}
\|Q_\bk(f)\|_{C(\TT)} 
\ \le \ 
\|L_\Lambda\|_{C(\TT)} \|f\|_{C(\TT)}
\ \le \ 
\|\Lambda \|\|f\|_{C(\TT)},    
\end{equation}

For $\bk \in \ZZdp$, we write  $\bk  \to  \infty$ if $k_i  \to  \infty$ for $i \in [d]$). In a way similar to the proof of \cite[Lemma 2.2]{Di13} one can show that  for every $f \in C(\TTd)$,
\begin{equation} \label{ConvergenceMixedQ_bk(f)}
\|f - Q_\bk(f)\|_{C(\TTd)} \to 0 , \ \bk  \to  \infty.
\end{equation}

For convenience we define the univariate operator $Q_{-1}$ by putting $Q_{-1}(f)=0$ for all $f$ on $\II$. Let  the operators $q_\bk$ be defined 
in the manner of the definition \eqref{def:Mixed[Q_bk]} by
\begin{equation} \label{eq:Def[Q_bk]}
q_\bk := \ \prod_{i=1}^d \biggl(Q_{k_i}- Q_{k_i-1}\biggl), \ \bk \in {\ZZ}^d_+. 
\end{equation}

From the equation $Q_\bk = \sum_{\bk' \le \bk}q_{\bk'}$
and \eqref{ConvergenceMixedQ_bk(f)} it is easy to see that 
a continuous function  $f$ has the decomposition
$
f \ =  \ \sum_{\bk \in {\ZZ}^d_+} q_\bk(f)
$
with the convergence in the norm of $C(\TTd)$.  
From  the refinement equation for the B-spline $M$, in the univariate case, we can represent the component functions $q_\bk(f)$ as 
 \begin{equation} \label{eq:RepresentationMixedQ_bk(f)}
q_\bk(f) 
= \ \sum_{\bs \in I(\bk)}c_{\bk,\bs}(f) N_{\bk,\bs},
\end{equation}
where $c_{\bk,\bs}$ are certain coefficient functionals of 
$f$. In the multivariate case, the representation  \eqref{eq:RepresentationMixedQ_bk(f)} holds true 
with the $c_{k,s}$ which are defined in the manner of the definition  
\eqref{def:Mixed[a_{k,s}(f)]} by
\begin{equation} \nonumber
c_{\bk,\bs}(f) 
\ = \   
\biggl(\prod_{j=1}^d c_{k_j,s_j}\biggl)(f).
\end{equation}
See \cite{Di11} for details. Thus, we have proven the following periodic B-spline quasi-interpolation representation for continuous functions on $\TTd$.
\begin{lemma} \label{lemma[representation]}
Every continuous function $f$ on $\TTd$ is represented as B-spline series 
\begin{equation} \label{eq:B-splineRepresentation}
f \ = \sum_{\bk \in {\ZZ}^d_+} \ q_\bk(f) = 
\sum_{\bk \in {\ZZ}^d_+} \sum_{\bs \in I(\bk)} c_{\bk,\bs}(f)N_{\bk,\bs}, 
\end{equation}  
converging in the norm of $C(\TTd)$, where the coefficient functionals $c_{\bk,\bs}(f)$ are explicitly constructed as
linear combinations of at most $m_0$ of function
values of $f$ for some $m_0 \in \NN$ which is independent of $\bk,\bs$ and $f$.
\end{lemma}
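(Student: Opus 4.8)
The plan is to assemble Lemma~\ref{lemma[representation]} from the ingredients that have already been laid out, so the ``proof'' is really a matter of chaining the displayed facts together carefully and tracking the finiteness of the function-value combinations. First I would recall the univariate convergence: each $Q_k$ is local and bounded in $C(\RR)$ with $\|Q_k(f)\|_{C(\RR)} \le \|\Lambda\|\,\|f\|_{C(\RR)}$ by \eqref{[|Q_k(f)|<(T)]}, and, since $Q$ reproduces $\Pp_{\ell-1}$, the periodized operators $Q_\bk$ satisfy $\|f - Q_\bk(f)\|_{C(\TTd)} \to 0$ as $\bk \to \infty$, which is exactly \eqref{ConvergenceMixedQ_bk(f)}. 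Next I would invoke the algebraic identity $Q_\bk = \sum_{\bk' \le \bk} q_{\bk'}$, which follows directly from the tensor-product definition \eqref{eq:Def[Q_bk]} of $q_\bk$ together with the univariate telescoping $Q_{k_i} = \sum_{k_i'=0}^{k_i}(Q_{k_i'} - Q_{k_i'-1})$ and the convention $Q_{-1} = 0$. Combining this with \eqref{ConvergenceMixedQ_bk(f)} gives the series $f = \sum_{\bk \in \ZZdp} q_\bk(f)$ with convergence in $C(\TTd)$, because the partial sums over $\{\bk' \le \bk\}$ are precisely $Q_\bk(f)$ and exhaust $\ZZdp$ as $\bk \to \infty$.

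The second half is to rewrite each $q_\bk(f)$ as a finite B-spline sum over $I(\bk)$ and to check that the coefficient functionals use only finitely many, uniformly bounded in number, point values of $f$. Here I would use the refinement equation for $M$: in the univariate case, applying it inside \eqref{[L_{k,s}-representation]}--\eqref{L_{k,s}} expresses $Q_k - Q_{k-1}$ in terms of the finer B-splines $N_{k,s}$, yielding \eqref{eq:RepresentationMixedQ_bk(f)} with coefficient functionals $c_{k,s}$ that, by \eqref{def[a_{k,s}(f)]}, are fixed linear combinations of a bounded number of translated samples $f(h^{(k)}(s-j+\ell/2))$ (the bound depending only on $\mu$, $\ell$ and the refinement mask, not on $k$ or $s$). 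Tensorizing via \eqref{def:Mixed[a_{k,s}(f)]}-style products gives the multivariate $c_{\bk,\bs}(f) = \bigl(\prod_{j=1}^d c_{k_j,s_j}\bigr)(f)$, and the product of $d$ univariate functionals each involving at most $m_1$ point values involves at most $m_1^d =: m_0$ point values of $f$, with $m_0$ independent of $\bk, \bs, f$. The periodization is harmless: $N_k$ was defined precisely as the $1$-periodic extension of $M(\ell 2^k\cdot)$, which vanishes outside $(0,1)$, so the periodic sum over $\bs \in I(\bk)$ reproduces the restriction of the $\RRd$-series to $\TTd$, and \eqref{def[periodicQI]} records this.

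I do not expect a genuine obstacle here, since every analytic estimate needed (boundedness of $Q_k$, approximation $Q_\bk(f)\to f$, the refinement-equation rewriting) is either stated in the excerpt or attributed to \cite{C92,Di11,Di13}; the one point demanding care is \emph{bookkeeping}, namely verifying that the refinement-equation expansion of $Q_{k_i} - Q_{k_i-1}$ keeps the support of the resulting $N_{k_i,s_i}$ inside $I(k_i)$ and keeps the number of sampled values of $f$ per coefficient bounded uniformly in the level. This is where I would be most careful to state the uniform bound $m_0$ cleanly, and I would simply cite \cite[Section~2]{Di11} for the explicit form of the $c_{k,s}$ rather than rederive the refinement coefficients. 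With those pieces in place the lemma follows by combining \eqref{eq:RepresentationMixedQ_bk(f)} with the $C(\TTd)$-convergent expansion $f = \sum_\bk q_\bk(f)$, giving exactly \eqref{eq:B-splineRepresentation}. \endproof
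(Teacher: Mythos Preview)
Your proposal is correct and follows essentially the same route as the paper: the paper's argument (given in the paragraphs immediately preceding the lemma) uses exactly the telescoping identity $Q_\bk = \sum_{\bk' \le \bk} q_{\bk'}$ together with \eqref{ConvergenceMixedQ_bk(f)} for the $C(\TTd)$-convergence, then invokes the refinement equation to obtain \eqref{eq:RepresentationMixedQ_bk(f)} and cites \cite{Di11} for the explicit coefficient functionals and the uniform bound on the number of sampled values. Your write-up is somewhat more detailed on the bookkeeping for $m_0$ (the product structure giving $m_0 = m_1^d$), but the ingredients and their assembly are the same.
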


\subsection{A formula for the coefficients in B-spline quasi-interpolation representations} 
\label{Coefficient}

In this subsection, we find an explicit formula for the coefficients 
$c_{\bk,\bs}(f)$ in the representation \eqref{eq:B-splineRepresentation} related to the $\ell$th difference operator $\Delta_\bh^\ell$, which plays an important role in the proof of a direct theorem of sampling representation of functions in the Sobolev space $\Wrp$. 

For univariate functions $f$ on $\TT$ the $\ell$th difference operator $\Delta_h^\ell$ is defined by 
\begin{equation} \label{def[Delta_h^ell]}
\Delta_h^\ell(f,x) := \
\sum_{j =0}^\ell (-1)^{\ell - j} \binom{\ell}{j} f(x + jh).
\end{equation}
If $u$ is any subset of $[d]$, for multivariate functions on $\TTd$
the mixed $(\ell,u)$th difference operator $\Delta_\bh^{\ell,u}$ is defined by 
\begin{equation*}
\Delta_\bh^{\ell,u} := \
\prod_{i \in u} \Delta_{h_i}^\ell, \quad \Delta_\bh^{\ell,\varnothing} := \ I,
\end{equation*}
where the univariate operator
$\Delta_{h_i}^\ell$ is applied to the univariate function $f$ by considering $f$ as a 
function of  variable $x_i$ with the other variables held fixed, and $I(f):= f$ for functions $f$ on $\TTd$. 
We also use the abbreviation $\Delta_\bh^\ell:= \Delta_\bh^{\ell,[d]}$.

If $\bh \in \RRd$, we define the shift operator $T_\bh$ for functions $f$ on $\TTd$ by
$
T_\bh(f) := \
f(\cdot + \bh).
$
Recall that a $d$-variate Laurent polynomial is call a function $P$ of the form
\begin{equation} \label{def[P]}
P(\bz) = \
\sum_{\bs \in A}  c_\bs \bz^\bs,
\end{equation}
where $A$ is a finite subset in $\ZZd$ and $\bz^\bs:= \prod_{j=1}^d z_j^{s_j}$.
A $d$-variate Laurent polynomial $P$ as \eqref{def[P]} generates the operator $T_\bh^{[P]}$ by 
\begin{equation} \label{def[T_h^{[P]}]}
T_\bh^{[P]}(f) = \
\sum_{\bs \in A}  c_\bs T_{\bs\bh}(f).
\end{equation}
Sometimes we also write $T_\bh^{[P]}=T_\bh^{[P(\bz)]}$.
Notice that any operation over polynomials generates a corresponding operation over operators $T_\bh^{[P]}$. Thus, in particular, we have 
\begin{equation*}
T_\bh^{[a_1P_1 + a_2P_2]}(f) \ = \
a_1 T_\bh^{[P_1]}(f) + a_2 T_\bh^{[P_2]}(f), \quad T_\bh^{[P_1.P_2]}(f) \ = \ T_\bh^{[P_1]}\circ T_\bh^{[P_2]}(f).
\end{equation*}
By definitions we have 
\begin{equation*}
\Delta_\bh^\ell
= \
T_\bh^{[D_\ell]}, \  D_\ell:= \prod_{j=1}^d (z_j - 1)^\ell, \quad
\Delta_\bh^{\ell,u}
= \
T_\bh^{[D_{\ell,u}]}, \  D_{\ell,u}:= \prod_{j \in u}(z_j - 1)^\ell.
\end{equation*}
We say that a $d$-variate polynomial is a {\em tensor product polynomial} if it is of the form
$
P(\bz) = \
\prod_{j=1}^d P_j(z_j),
$
where $P_j(z_j)$ are univariate polynomial in variable $z_j$. 

\begin{lemma} \label{lemma[factor]}
Let $P$ be a tensor product Laurent polynomial, $\bh \in \RRd$ with $h_j \not=0$, and $\ell \in \NN$. Assume that $T_\bh^{[P]}(g)=0$ for every polynomial $g \in \Pp_{\ell-1}$ Then $P$ has a factor $D_\ell$ and consequently,
\begin{equation*}
T_\bh^{[P]}
\ = \
T_\bh^{[P^*]} \circ \Delta_\bh^\ell, \quad  P(\bz)= D_\ell(\bz)P^*(\bz),
\end{equation*}
where $P^*$ is a tensor product Laurent polynomial.
\end{lemma}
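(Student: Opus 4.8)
The plan is to reduce the statement to a single variable and there to a Taylor expansion at $z=1$. Write $P(\bz)=\prod_{j=1}^d P_j(z_j)$ with each $P_j$ a univariate Laurent polynomial. By the ring--homomorphism property of $P\mapsto T_\bh^{[P]}$ recorded just before the lemma, $T_\bh^{[P]}$ is the composition of the pairwise commuting operators $T_{h_j}^{[P_j]}$, each acting only in the variable $x_j$. Testing on tensor product polynomials $g(\bx)=\prod_{j=1}^d g_j(x_j)$ with each $g_j$ a polynomial of degree $\le\ell-1$ in $x_j$ (such products lie in $\Pp_{\ell-1}$ and span it) gives $T_\bh^{[P]}(g)=\prod_{j=1}^d\big(T_{h_j}^{[P_j]}g_j\big)(x_j)$, a product of functions of separate variables; its identical vanishing for all such $g$ should force each factor $P_j$ to annihilate the space of polynomials of degree $\le\ell-1$ in one variable (this is the delicate point, discussed below). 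Granting that, it suffices to prove the univariate case: if $Q(z)=\sum_{s}c_sz^s$ is a univariate Laurent polynomial, $h\neq 0$, and $T_h^{[Q]}$ annihilates $1,x,\dots,x^{\ell-1}$, then $(z-1)^\ell$ divides $Q$ in the ring of Laurent polynomials, with Laurent polynomial quotient.

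For the univariate core I would expand, by the binomial theorem,
\[
T_h^{[Q]}(x^m)(x)\ =\ \sum_s c_s (x+sh)^m\ =\ \sum_{i=0}^m\binom{m}{i}h^i\Big(\sum_s c_s s^i\Big)x^{m-i}.
\]
Running $m$ through $0,1,\dots,\ell-1$ and using $h\neq 0$, the vanishing of these polynomials is equivalent to the moment conditions $\sum_s c_s s^i=0$ for $i=0,\dots,\ell-1$. Since the Euler operator satisfies $(z\,\rd/\rd z)^i z^s=s^i z^s$, we have $\sum_s c_s s^i=\bigl[(z\,\rd/\rd z)^iQ\bigr](1)$, and because the operators $(z\,\rd/\rd z)^i$ are related to $\rd^i/\rd z^i$ at the regular point $z=1$ by an invertible triangular change of basis, these conditions are equivalent to $Q^{(i)}(1)=0$ for $i=0,\dots,\ell-1$, i.e.\ $Q$ vanishes to order at least $\ell$ at $z=1$. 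Hence $Q(z)=(z-1)^\ell Q^\ast(z)$ with $Q^\ast$ again a Laurent polynomial (clearing the negative powers of $z$ if necessary).

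Putting the factors back together, set $P_j^\ast(z_j):=P_j(z_j)/(z_j-1)^\ell$, a Laurent polynomial by the previous step, and $P^\ast(\bz):=\prod_{j=1}^d P_j^\ast(z_j)$, a tensor product Laurent polynomial with $P=D_\ell P^\ast$. Using once more $T_\bh^{[P_1P_2]}=T_\bh^{[P_1]}\circ T_\bh^{[P_2]}$ together with commutativity of these operators and $T_\bh^{[D_\ell]}=\Delta_\bh^\ell$, we get $T_\bh^{[P]}=T_\bh^{[D_\ell]}\circ T_\bh^{[P^\ast]}=T_\bh^{[P^\ast]}\circ\Delta_\bh^\ell$, which is the assertion.

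The computational part is routine. The step I expect to require the most care is the reduction from $\RRd$ to one variable: a product of functions of disjoint variables vanishes identically as soon as one factor does, so a priori the multivariate hypothesis only yields, for each test function, that \emph{some} univariate factor annihilates the degree-$(\ell-1)$ polynomials — to obtain this for \emph{every} $P_j$ one must genuinely use that $P$ is a tensor product and work in the setting of the intended application, where $P$ is the product of the univariate coefficient--functional polynomials of the quasi--interpolation operator, each of which, through $Q_{k}-Q_{k-1}$, already annihilates $\Pp_{\ell-1}$ in its own variable. This per--variable annihilation is the hypothesis I would actually carry through; once it is in place, the Laurent--polynomial bookkeeping and the composition identities finish the argument.
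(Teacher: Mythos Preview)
Your univariate argument is correct and takes a genuinely different route from the paper. The paper proceeds by induction on $\ell$: for $\ell=1$ it evaluates $T_h^{[P]}$ on the constant $1$ to get $P(1)=0$ and invokes B\'ezout; for the inductive step it factors $P=(z-1)^{\ell-1}P_1$, applies $T_h^{[P]}$ to a monic polynomial of degree $\ell-1$, and uses $\Delta_h^{\ell-1}(x^{\ell-1})=\text{const}\neq 0$ to force another factor $(z-1)$ into $P_1$. Your approach via the moment conditions $\sum_s c_s s^i=0$ and the equivalence with $Q^{(i)}(1)=0$ is cleaner and avoids the induction altogether; both reach the same conclusion that $(z-1)^\ell$ divides $Q$.

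On the multivariate reduction you are right to flag a genuine problem, and in fact the issue is sharper than you indicate: the lemma as stated is \emph{false} for $d\ge 2$. Take $\ell\ge 2$ and $P(\bz)=(z_1-1)(z_2-1)^\ell$. Then $T_\bh^{[P]}=\Delta_{h_1}^1\circ\Delta_{h_2}^\ell$ annihilates every $g\in\Pp_{\ell-1}$ (since $\Delta_{h_2}^\ell$ already kills anything of degree $\le\ell-1$ in $x_2$), yet $D_\ell(\bz)=(z_1-1)^\ell(z_2-1)^\ell$ does not divide $P$. The paper's one-line ``by the tensor product argument it is enough to prove the lemma for the case $d=1$'' glosses over exactly this point. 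Your proposed fix---replacing the joint hypothesis by the per-variable hypothesis that each $T_{h_j}^{[P_j]}$ annihilates univariate polynomials of degree $\le\ell-1$---is the correct repair, and it is precisely how the lemma is actually used downstream: Lemma~2.2 applies the $d=1$ case only, and Theorem~2.1 then assembles the multivariate formula as a tensor product of the univariate pieces, never invoking the multivariate form of the present lemma.
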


\begin{proof} By the tensor product argument it is enough to prove the lemma for the case $d=1$. We prove this case by induction on $l$. Let $P(z)=\sum_{s = -m}^n  c_s z^s$ for some $m,n \in \ZZ_+$. Consider first the case $l=1$. Assume that $T_h^{[P]}(g)=0$ for every constant functions $g$. Then 
replacing by $g_0=1$ in \eqref{def[T_h^{[P]}]} we get
$
T_h^{[P]}(g_0)
\ = \ 
\sum_{s = -m}^n  c_s \ = \ 0.
$
By B\'ezout's theorem $P$ has a factor $(z-1)$. This proves the lemma for $l=1$. Assume it is true for $l-1$ and $T_h^{[P]}(g)=0$ for every polynomial $g$ of degree at most $l-1$. By the induction assumption we have 
\begin{equation} \label{[induction]}
T_h^{[P]}
\ = \
T_h^{[P_1]} \circ \Delta_h^{l-1}, \quad  P(z):= (z-1)^{l-1} P_1(z).
\end{equation}
We take a proper polynomial $g_l$ of degree $l-1$ (with the nonzero eldest coefficient). Hence 
$\psi_l = \Delta_h^{l-1}(g_l) = a$ where $a$ is a nonzero constant. Similarly to the case $l=1$, from the equations
$ 0 \ = \ T_h^{[P]} (g_l) \ = \ T_h^{[P_1]} (\psi_l)$ we conclude that $P_1$ has a factor $(z-1)$. Hence, by 
\eqref{[induction]} we can see that $P$ has a factor $(z-1)^l$. The lemma is proved.
\end{proof}

Let us return to the definition of quasi-interpolation operator $Q$ of the form 
\eqref{def:Q} induced by the sequence $\Lambda$  as in \eqref{def:Lambda} which can be uniquely characterized by the univariate symmetric Laurent polynomial
\begin{equation} \label{P_Lambda}
P_\Lambda(z)
:= \ 
z^{\ell/2} \sum_{|s|\le \mu}\lambda (s) z^s.
\end{equation}
Let the $d$-variate symmetric tensor product Laurent polynomial $P_\Lambda$ be given by 
\begin{equation} \nonumber
P_\Lambda(\bz)
:= \ 
 \prod_{j=1}^d z_j^{\ell/2}\sum_{|s_j|\le \mu}\lambda (s_j) z_j^{s_j}.
\end{equation}
For the periodic quasi-interpolation operator 
\[
q_\bk(f)  \ = \ 
\sum_{\bs \in I(\bk)} a_{\bk,\bs}(f) N_{\bk,\bs} 
\]
given as in \eqref{def[periodicQI]}, from \eqref{def[a_{k,s}(f)]} we get
\begin{equation} \label{eq[a_{k,s}(f)]}
a_{\bk,\bs}(f)
\ = \ 
T_{\bh^{(\bk)}}^{[P_\Lambda]}(f)\big(\bs\bh^{(\bk)}\big),
\end{equation}
where $\bh^{(\bk)}:= \big(h_1^{(k_1)},...,h_d^{(k_d)}\big)$.

Let us first find an explicit formula for the univariate operator $Q_k(f)$.
We have for $k > 0$,
\begin{equation*}
\begin{aligned}
Q_k(f)
\ & = \
\sum_{s \in I(k)} T_{h^{(k)}}^{[P_\Lambda]}(f)(sh^{(k)})) N_{k,s} \\[1.5ex]
\ & = \
\sum_{s \in I(k-1)} T_{h^{(k)}}^{[P_\Lambda]}(f)(2sh^{(k)}))  N_{k,2s} 
\  +  \
\sum_{s \in I(k-1)} T_{h^{(k)}}^{[P_\Lambda]}(f)((2s+1)h^{(k)})) 
 N_{k,2s+1}. 
\end{aligned}
\end{equation*}
From \eqref{eq[a_{k,s}(f)]} and the refinement equation for $M$, we deduce that 
\begin{equation*}
\begin{aligned}
Q_{k-1}(f)
\ & = \
\sum_{s \in I(k-1)} T_{h^{(k-1)}}^{[P_\Lambda]}(f)(sh^{(k-1)})) 
\biggl[2^{-\ell+1}\sum_{j =0}^{\ell}\binom{\ell}{j} N_{k,2s+j}\biggl]\\[1.5ex]
\ & = \
2^{-\ell+1}\sum_{j =0}^r\binom{\ell}{2j}\sum_{s \in I(k-1)} T_{h^{(k-1)}}^{[P_\Lambda]}(f)(sh^{(k-1)})) 
 N_{k,2s+2j} \\[1.5ex]
\ & +  
2^{-\ell+1}\sum_{j =0}^{r-1}\binom{\ell}{2j+1}\sum_{s \in I(k-1)} T_{h^{(k-1)}}^{[P_\Lambda]}(f)(sh^{(k-1)})) 
 N_{k,2s+2j+1} \\[1.5ex]
\ & =: \
Q_{k-1}^{\operatorname{even}}(f) + Q_{k-1}^{\operatorname{odd}}(f).
\end{aligned}
\end{equation*}
By the identities $h^{(k-1)}= 2 h^{(k)}$, $N_{k,\ell2^k +m} = N_{k,m}$ and $f(h^{(k)})(\ell2^k + m)= f(h^{(k)}m)$ for 
$k \in \ZZ_+$ and $m \in \ZZ$, we have
\begin{equation*}
\begin{aligned}
Q_{k-1}^{\operatorname{even}}(f)
\ & = \
2^{-\ell+1}\sum_{j =0}^r\binom{\ell}{2j}\sum_{s \in j+I(k-1)} T_{h^{(k)}}^{[P_\Lambda]}(f)(2(s-j)h^{(k)})) 
 N_{k,2s} \\[1.5ex]
\ &= \  
2^{-\ell+1}\sum_{j =0}^r\binom{\ell}{2j}\sum_{s \in I(k-1)} T_{h^{(k)}}^{[P_\Lambda]}(f)(2(s-j)h^{(k)})) 
 N_{k,2s} \\[1.5ex]
\ &= \  
\sum_{s \in I(k-1)} T_{h^{(k)}}^{[P_{\operatorname{even}}']}(f)(2sh^{(k)}) N_{k,2s}, \\
\end{aligned}
\end{equation*}
where
\begin{equation} \label{P'_even}
\ P_{\operatorname{even}}'(z) 
:= \
2^{-\ell+1}P_\Lambda (z^2)\sum_{j =0}^r\binom{\ell}{2j}z^{-2j}
\end{equation}
In a similar way we obtain
\begin{equation*}
Q_{k-1}^{\operatorname{odd}}(f)
\  = \
\sum_{s \in I(k-1)} T_{h^{(k)}}^{[P_{\operatorname{odd}}']}(f)((2s+1)h^{(k)}) N_{k,2s+1}, 
\end{equation*}
where
\begin{equation} \label{P'_odd}
\ P_{\operatorname{odd}}'(z) 
:= \
2^{-\ell+1}P_\Lambda (z^2)\sum_{j =0}^{r-1}\binom{\ell}{2j+1}z^{-2j-1}.
\end{equation}
We define
\begin{equation} \label{P_even,P_odd}
\ P_{\operatorname{even}}  
:= \
P_\Lambda - P_{\operatorname{even}}', \quad
\ P_{\operatorname{odd}}  
:= \
P_\Lambda - P_{\operatorname{odd}}'
\end{equation}
Then from the definition $q_k(f) = Q_k(f) - Q_{k-1}(f)$ we receive the following representation for $q_k(f)$,
\begin{equation} \label{eq[q_0(f)]}
q_0(f) 
 \ = \ 
\sum_{s \in I(0)} T_{h^{(0)}}^{[P_\Lambda]}(f)(sh^{(0)})) N_{0,s}, 
 \end{equation}
 and for $k>0$,
\begin{equation} \label{eq[Q_bk(f)]}
q_k(f) 
 \ = \ 
q_k^{\operatorname{even}}(f) + q_k^{\operatorname{odd}}(f)
 \end{equation}
 with
\begin{equation} \nonumber
\begin{aligned}
q_k^{\operatorname{even}}(f)
&= \
\sum_{s \in I(k-1)} T_{h^{(k)}}^{[P_{\operatorname{even}}]}(f)(2sh^{(k)}) N_{k,2s},\\[1.5ex]
q_k^{\operatorname{odd}}(f)
&= \ 
\sum_{s \in I(k-1)} T_{h^{(k)}}^{[P_{\operatorname{odd}}]}(f)((2s+1)h^{(k)}) N_{k,2s+1}.
\end{aligned}
\end{equation}

From the definitions of $Q_k$ and $q_k$ it follows that 
\[T_{h^{(k)}}^{[P_{\operatorname{even}}]}(g)(2sh^{(k)}) = 0 \quad \text{and} \quad  
T_{h^{(k)}}^{[P_{\operatorname{odd}}]}(g)((2s+1)h^{(k)}) = 0 \quad \text{for every} \quad g \in \Pp_r.
\] 
Hence, by 
Lemma \ref{lemma[factor]} we prove the following lemma for the univariate operators $q_k$.

\begin{lemma} \label{lemma[Q_bk^even&odd]}
We have
\begin{equation} \label{eq[Q_bk^even&odd]}
\begin{aligned}
P_{\operatorname{even}}(z)
&= \
D_{\ell}(z) P_{\operatorname{even}}^*(z)\,\\[1.5ex]
P_{\operatorname{odd}}(z)
&= \ 
D_{\ell}(z) P_{\operatorname{odd}}^*(z). 
\end{aligned}
\end{equation}
where $P_{\operatorname{even}}^*$, $P_{\operatorname{odd}}^*$ are a symmetric Laurent polynomial.
Therefore, in the representation \eqref{eq[q_0(f)]}--\eqref{eq[Q_bk(f)]} of $Q_k(f)$, we have  for $k>0$,
\begin{equation} \nonumber
\begin{aligned}
q_k^{\operatorname{even}}(f)
&= \
\sum_{s \in I(k-1)} T_{h^{(k)}}^{[P_{\operatorname{even}}^*]}\circ 
\Delta_{h^{(k)}}^{\ell} (f)(2sh^{(k)}) N_{k,2s},\\[1.5ex]
q_k^{\operatorname{odd}}(f)
&= \ 
\sum_{s \in I(k-1)} T_{h^{(k)}}^{[P_{\operatorname{odd}}^*]}\circ 
\Delta_{h^{(k)}}^{\ell} (f)((2s+1)h^{(k)}) N_{k,2s+1}.
\end{aligned}
\end{equation}
Equivalently, in the representation \eqref{eq:RepresentationMixedQ_bk(f)} of $Q_k(f)$, we have for $s \in I(0)$
\begin{equation} \nonumber
c_{0,s}(f) 
 \ = \ 
T_{h^{(0)}}^{[P_\Lambda]}(f)(sh^{(0)}),
 \end{equation}
 and for $k>0$ and $s \in I(k)$,
 \begin{equation} \nonumber
c_{k,s}(f) 
 \ = \
\begin{cases}
T_{h^{(k)}}^{[P_{\operatorname{even}}^*]}\circ \Delta_{h^{(k)}}^{\ell} (f)(sh^{(k)}),  & s \ \text{even} \\[1.5ex]
T_{h^{(k)}}^{[P_{\operatorname{odd}}^*]}\circ \Delta_{h^{(k)}}^{\ell} (f)(sh^{(k)}), & s \ \text{odd}.
\end{cases} 
\end{equation}
\end{lemma}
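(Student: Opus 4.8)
The plan is to combine the order-$\ell$ polynomial reproduction of the quasi-interpolation operator $Q$ with Lemma~\ref{lemma[factor]}, and then to obtain the symmetry of the quotient polynomials by a direct manipulation of $P_{\operatorname{even}}$ and $P_{\operatorname{odd}}$. First I would observe that, since $Q$ reproduces $\Pp_{\ell-1}$, so do its dilates $Q_k$ and $Q_{k-1}$, hence $q_k(g)=Q_k(g)-Q_{k-1}(g)=0$ for every polynomial $g\in\Pp_{\ell-1}$. Passing to the cardinal B-spline expansion on $\RR$ and writing $q_k=q_k^{\operatorname{even}}+q_k^{\operatorname{odd}}$, with the two summands supported on the disjoint families $\{M_{k,2s}\}_{s\in\ZZ}$ and $\{M_{k,2s+1}\}_{s\in\ZZ}$, the linear independence of $\{M_{k,s}\}_{s\in\ZZ}$ forces every coefficient of $q_k(g)$ to vanish; in particular
\[
T_{h^{(k)}}^{[P_{\operatorname{even}}]}(g)\big(2sh^{(k)}\big)=0, \qquad T_{h^{(k)}}^{[P_{\operatorname{odd}}]}(g)\big((2s+1)h^{(k)}\big)=0
\]
for all $s\in\ZZ$ and all $g\in\Pp_{\ell-1}$. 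Since $g\mapsto T_{h^{(k)}}^{[P_{\operatorname{even}}]}(g)$ maps polynomials to polynomials and the image vanishes on the entire arithmetic progression $\{2sh^{(k)}\}_{s\in\ZZ}$, it vanishes identically, so $T_{h^{(k)}}^{[P_{\operatorname{even}}]}(g)=0$, and likewise $T_{h^{(k)}}^{[P_{\operatorname{odd}}]}(g)=0$, for all $g\in\Pp_{\ell-1}$. Lemma~\ref{lemma[factor]} (with $d=1$) then applies to each of $P_{\operatorname{even}}$ and $P_{\operatorname{odd}}$, yielding the factorizations $P_{\operatorname{even}}=D_\ell P_{\operatorname{even}}^*$, $P_{\operatorname{odd}}=D_\ell P_{\operatorname{odd}}^*$ with Laurent-polynomial quotients, together with $T_{h^{(k)}}^{[P_{\operatorname{even}}]}=T_{h^{(k)}}^{[P_{\operatorname{even}}^*]}\circ\Delta_{h^{(k)}}^{\ell}$ and $T_{h^{(k)}}^{[P_{\operatorname{odd}}]}=T_{h^{(k)}}^{[P_{\operatorname{odd}}^*]}\circ\Delta_{h^{(k)}}^{\ell}$.

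It remains to verify that $P_{\operatorname{even}}^*$ and $P_{\operatorname{odd}}^*$ are symmetric, which I would deduce from the symmetry of $P_{\operatorname{even}}$ and $P_{\operatorname{odd}}$ themselves. Rewriting the binomial sums in \eqref{P'_even}--\eqref{P'_odd} via $\sum_{j=0}^{r}\binom{\ell}{2j}z^{2j}=\tfrac12\big[(1+z)^\ell+(1-z)^\ell\big]$ and $\sum_{j=0}^{r-1}\binom{\ell}{2j+1}z^{2j+1}=\tfrac12\big[(1+z)^\ell-(1-z)^\ell\big]$ (recall $r=\ell/2$), and using the symmetry $P_\Lambda(1/z)=z^{-\ell}P_\Lambda(z)$ coming from the evenness of $\Lambda$, one checks from \eqref{P_even,P_odd} that $P_{\operatorname{even}}(1/z)=z^{-\ell}P_{\operatorname{even}}(z)$ and $P_{\operatorname{odd}}(1/z)=z^{-\ell}P_{\operatorname{odd}}(z)$; this is precisely where the hypothesis that $\ell$ is even enters, since it makes $(1-z)^\ell=(z-1)^\ell$ and hence $D_\ell(1/z)=z^{-\ell}D_\ell(z)$. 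Dividing these relations by the one for $D_\ell$ gives $P_{\operatorname{even}}^*(1/z)=P_{\operatorname{even}}^*(z)$ and $P_{\operatorname{odd}}^*(1/z)=P_{\operatorname{odd}}^*(z)$, so both quotients are symmetric Laurent polynomials.

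Finally, substituting the two factorizations into the representation \eqref{eq[Q_bk(f)]} of $q_k(f)$ for $k>0$ and using $T_{h^{(k)}}^{[D_\ell]}=\Delta_{h^{(k)}}^{\ell}$ gives the asserted formulas for $q_k^{\operatorname{even}}(f)$ and $q_k^{\operatorname{odd}}(f)$; matching them term by term with $q_k(f)=\sum_{s\in I(k)}c_{k,s}(f)N_{k,s}$ (again by linear independence of the $N_{k,s}$) produces the stated formula for $c_{k,s}(f)$ when $k>0$, while the case $k=0$ is immediate from \eqref{eq[q_0(f)]} since $q_0=Q_0$. The step I expect to be the crux is the first one: to extract the \emph{full} factor $D_\ell$ one needs the even and odd coefficient functionals to annihilate all of $\Pp_{\ell-1}$, i.e.\ the full order-$\ell$ reproduction of $Q$; annihilating polynomials of smaller degree would only produce a smaller power of $(z-1)$, which — even combined with the symmetry of $P_{\operatorname{even}}$ and $P_{\operatorname{odd}}$ — does not suffice. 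Everything after the application of Lemma~\ref{lemma[factor]} is algebraic bookkeeping plus the symmetry computation, which is routine once $\ell$ is taken even.
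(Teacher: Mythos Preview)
Your proof is correct and follows the same approach as the paper: use the polynomial reproduction of $Q_k$ and $Q_{k-1}$ to conclude that the even/odd coefficient functionals annihilate $\Pp_{\ell-1}$, then invoke Lemma~\ref{lemma[factor]}. Your argument is in fact more complete than the paper's, which neither spells out the passage from vanishing on the arithmetic progression $\{sh^{(k)}\}$ to identical vanishing of $T_{h^{(k)}}^{[P_{\operatorname{even}}]}(g)$ and $T_{h^{(k)}}^{[P_{\operatorname{odd}}]}(g)$, nor verifies the asserted symmetry of $P_{\operatorname{even}}^*$ and $P_{\operatorname{odd}}^*$.
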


\begin{proof} Consider the representation \eqref{eq:RepresentationMixedQ_bk(f)}  for $Q_k(f)$ and $d=1$. If $g$ is arbitrary polynomial of degree at most $\ell-1$, then since $Q_k$ reproduces $g$ we have $Q_k(g) = 0$ and consequently, 
$c_{k,s}(g)=0$ for $k > 0$. The equations \eqref{eq[q_0(f)]}--\eqref{eq[Q_bk(f)]} give an explicit formula for the coefficient $c_{k,s}(g)$ as $T_{h^{(k)}}^{[P_{\operatorname{even}}]}(g)(2sh^{(k)})$ and $T_{h^{(k)}}^{[P_{\operatorname{odd}}]}(g)((2s+1)h^{(k)})$. Hence, by Lemma \ref{lemma[factor]} we get 
\eqref{eq[Q_bk^even&odd]}.
\end{proof}

 Put  $\ZZ_+:= \{s \in \ZZ: s \ge 0 \}$ and  
$\ZZdp(u):= \{\bs \in \ZZdp: s_i = 0 , \ i \notin u\}$  for a set $u \subset [d]$. 
\begin{theorem} \label{theorem[c_{k,s}]}
In the representation \eqref{eq:RepresentationMixedQ_bk(f)} of $Q_\bk(f)$, we have for every 
$\bk \in \ZZdp(u)$ and $\bs \in I(\bk)$,
\begin{equation} \label{eq[c_{k,s}(f)](d>1)}
c_{\bk,\bs}(f) 
 \ = \ 
T_{\bh^{(\bk)}}^{[P_{\bk,\bs}]}(f)(\bs\bh^{(\bk)}), 
\end{equation}
where
\begin{equation} \label{eq[P_{k,s}]}
P_{\bk,\bs} (\bz)
 \ = \
\prod_{j \not\in u} P_\Lambda(z_j)\,\prod_{j \in u} P_{k_j,s_j}^*(z_j) \prod_{j \in u} D_{\ell}(z_j), 
\end{equation}
\begin{equation} \label{eq[P_{k,s}^*]}
P_{k_j,s_j}^*(z_j) 
 \ = \
\begin{cases}
P_{\operatorname{even}}^*(z_j),  & s \ \text{even}, \\[1.5ex]
P_{\operatorname{odd}}^*(z_j), & s \ \text{odd}.
\end{cases} 
\end{equation}
\end{theorem}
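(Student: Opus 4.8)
The plan is to reduce the multivariate statement to the univariate results already obtained in Lemma \ref{lemma[Q_bk^even&odd]} by exploiting the tensor product structure of all the objects involved. Recall that $Q_\bk = \prod_{i=1}^d Q_{k_i}$ and that in the difference decomposition \eqref{eq:Def[Q_bk]} we have $q_\bk = \prod_{i=1}^d (Q_{k_i} - Q_{k_i-1})$, so the component functional $c_{\bk,\bs}$ factors as $c_{\bk,\bs} = \bigl(\prod_{j=1}^d c_{k_j,s_j}\bigr)$ in the sense that each univariate factor acts on the corresponding variable with the others held fixed. Since $\bk \in \ZZdp(u)$ means $k_j = 0$ for $j \notin u$, for those indices the univariate factor is simply $Q_0 - Q_{-1} = Q_0$, whose coefficient functional is $c_{0,s_j}(f) = T_{h^{(0)}}^{[P_\Lambda]}(f)(s_jh^{(0)})$ by \eqref{eq[q_0(f)]}. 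For the indices $j \in u$ we have $k_j > 0$, and Lemma \ref{lemma[Q_bk^even&odd]} supplies $c_{k_j,s_j}(f) = T_{h^{(k_j)}}^{[P^*_{k_j,s_j}]} \circ \Delta_{h^{(k_j)}}^{\ell}(f)(s_jh^{(k_j)})$, where $P^*_{k_j,s_j}$ is $P^*_{\operatorname{even}}$ or $P^*_{\operatorname{odd}}$ according to the parity of $s_j$, exactly as in \eqref{eq[P_{k,s}^*]}.

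The main step is then to observe that the composition of univariate shift operators of the form $T_{h^{(k_j)}}^{[\cdot]}$ in distinct variables is precisely the multivariate operator $T_{\bh^{(\bk)}}^{[\cdot]}$ whose symbol is the tensor product of the univariate symbols; this is the content of the remark preceding Lemma \ref{lemma[factor]} that any operation on polynomials carries over to the operators $T_\bh^{[P]}$, together with the identity $T_\bh^{[P_1 P_2]} = T_\bh^{[P_1]} \circ T_\bh^{[P_2]}$ (here used across different variables, which is immediate since operators in different variables commute). Composing $\prod_{j \notin u} T^{[P_\Lambda]}$ in the variables $x_j$, $j \notin u$, with $\prod_{j \in u} T^{[P^*_{k_j,s_j}]} \circ \Delta^{\ell}_{h^{(k_j)}} = \prod_{j \in u} T^{[P^*_{k_j,s_j} D_\ell]}$ in the variables $x_j$, $j \in u$, yields the operator $T_{\bh^{(\bk)}}^{[P_{\bk,\bs}]}$ with
\[
P_{\bk,\bs}(\bz) = \prod_{j \notin u} P_\Lambda(z_j) \prod_{j \in u} P^*_{k_j,s_j}(z_j) \prod_{j \in u} D_\ell(z_j),
\]
which is \eqref{eq[P_{k,s}]}, and evaluation at the lattice point $\bs\bh^{(\bk)}$ gives \eqref{eq[c_{k,s}(f)](d>1)}.

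I do not expect a serious obstacle here: everything is a bookkeeping exercise in the tensor product formalism, once one is careful that the univariate factors act on the correct variables and that the dilation parameters $h^{(k_j)}$ differ from coordinate to coordinate, so that the combined shift operator is $T_{\bh^{(\bk)}}^{[\cdot]}$ with the anisotropic vector $\bh^{(\bk)} = (h_1^{(k_1)},\dots,h_d^{(k_d)})$ rather than a single scale. The only mild subtlety worth spelling out is that for $j \notin u$ one uses the plain representation \eqref{eq[q_0(f)]} for $q_0$ (no $D_\ell$ factor appears, reflecting that $Q_0$ itself, not a difference, is applied in those variables), so the factor $D_\ell(z_j)$ is present in $P_{\bk,\bs}$ exactly for $j \in u$, consistent with \eqref{eq[P_{k,s}]}.
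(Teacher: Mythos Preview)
Your proposal is correct and follows essentially the same route as the paper: the paper's proof likewise factors $c_{\bk,\bs}$ as the tensor product of the univariate functionals, invokes Lemma~\ref{lemma[Q_bk^even&odd]} coordinatewise (taking $P_\Lambda$ when $k_j=0$ and $P^*_{\operatorname{even}/\operatorname{odd}}\,D_\ell$ when $k_j>0$), and then multiplies the symbols to obtain $P_{\bk,\bs}$ and the operator $T_{\bh^{(\bk)}}^{[P_{\bk,\bs}]}$. Your write-up is just a more explicit rendering of the same tensor-product bookkeeping; the one tacit assumption you make---that $k_j>0$ for $j\in u$---is exactly the intended reading (i.e.\ $u=\{j:k_j>0\}$), and the paper's proof relies on the same identification.
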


\begin{proof} Indeed, from the definition of $c_{\bk,\bs}(f)$ and Lemma \ref{lemma[Q_bk^even&odd]} we have
for every $\bk \in \ZZdp(u)$ and $\bs \in I(\bk)$,
\begin{equation} \nonumber
c_{\bk,\bs}(f) 
 \ = \ 
\biggl(\prod_{j=1}^d T_{h^{(k)}_j}^{[P_{k_j,s_j}]}\biggl)(f)(\bs\bh^{(\bk)}), 
 \ = \ 
T_{\bh^{(\bk)}}^{[P_{\bk,\bs}]}(f)(\bs\bh^{(\bk)}),
\end{equation}
where
\begin{equation} \nonumber
P_{k_j,s_j}(z_j) 
 \ = \
\begin{cases}
P_\Lambda(z_j), \ & k_j=0, \\[1.5ex] 
P_{\operatorname{even}}^*(z_j) D_{\ell}(z_j), \ & k_j > 0, \ s \ \text{even} \\[1.5ex]
P_{\operatorname{odd}}^*(z_j) D_{\ell}(z_j), \ & k_j > 0,\ s \ \text{odd}.
\end{cases} 
\end{equation}
\end{proof}

\subsection{Examples}
\label{Examples}

The operator $Q$ is induced by the sequence $\Lambda$  as in \eqref{def:Lambda} which can be uniquely characterized by the univariate symmetric Laurent polynomial
$P_\Lambda$. In this subsection, we give some examples of the univariate symmetric Laurent polynomial $P_\Lambda$ characterizing quasi-interpolation operator $Q$ of the form  \eqref{def:Q}. For a given $P_\Lambda$, the Laurent polynomials $P_{\operatorname{even}}^*$ and  $P_{\operatorname{odd}}^*$ can be computed from 
\eqref{P'_even}--\eqref{P_even,P_odd}.

Let us consider the case $\ell = 2$ when $M(x)\ = \ (1 - |x-1|)_+$ is the piece-wise linear cardinal B-spine with knot at $0,1,2$. Let $\Lambda = \{\lambda(s)\}_{j=0}$ $(\mu=0)$ be a given by $\lambda(0)= 1$. 
If $N_k$ is the periodic extension of $M(2^{k+1}\cdot)$,  then 
\begin{equation*}
N_{k,s}(x):= \ N_k(x - s), \ k \in {\ZZ}_+, \ s \in I(k), 
\end{equation*}
where $I(k) := \{0,1,..., 2^{k+1} - 1\}$.
Consider the related periodic nodal quasi-interpolation operator for functions $f$ on $\TT$ and $k \in \ZZ_+$,
\begin{equation} \nonumber
Q_k(f,x)= \ \sum_{s \in I(k)} f(2^{-(k+1)}(s+1)) N_{k,s}(x) 
\end{equation}
We have
\begin{equation} \nonumber
\begin{aligned}
P_\Lambda(z)
\ &= \ 
z, \\[1.5ex]
P_{\operatorname{even}}(z)
\ &= \
- \frac{1}{2} (z-1)^2, \quad 
P_{\operatorname{even}}^*(z)
\ = \
- \frac{1}{2},\\[1.5ex]
P_{\operatorname{odd}}(z)
&= \ 
P_{\operatorname{odd}}^*(z)
\ = \ 
0,
\end{aligned}
\end{equation}
and
\begin{equation} \nonumber
\|L_\Lambda\|
\ = \ 
1, \quad
\|P_{\operatorname{even}}^*\|
\ = \ 
\frac{1}{2}, \quad
\|P_{\operatorname{odd}}^*\|
\ = \ 
0.
\end{equation}
Hence,
\begin{equation} \nonumber
q_0(f) 
 \ = \ 
\sum_{s=0}^1 T_{2^{-1}}^{[P_\Lambda]}(f)(2^{-1}s) N_{0,s}
\ = \
f(0), 
 \end{equation}
 and for $k>0$,
\begin{equation} \nonumber
q_k(f) 
 \ = \ 
q_k^{\operatorname{even}}(f) 
\ = \
\sum_{s =0}^{2^k-1} 
\biggl\{- \frac {1}{2} \Delta_{2^{-(k+1)}}^2 f(2^{-k}s)\biggl\} N_{k,2s}.
\end{equation}
We show that after redefining $N_{k,2s}$ as $\varphi_{k,s}$, the quasi-interpolation representation 
\eqref{eq:B-splineRepresentation} becomes the classical periodic Faber series.
We introduce the univariate hat functions $\varphi_{k,s}$ by
\begin{equation*} 
\varphi_{0,0}:= 1, 
\quad
\varphi_{k,s}:=  N_{k,2s}, \ k > 0 \ s \in Z(k),
\end{equation*}
where $Z(0) := \{0\}$ and  $Z(k) := \{0,1,..., 2^{k-1} - 1\}$.
Put $Z(\bk):= \prod_{i=1}^d Z(k_i)$. 
For $\bk \in {\ZZ}^d_+$, $\bs \in Z(\bk)$, define the $d$-variate hat functions
\begin{equation*} 
\varphi_{\bk,\bs}(\bx)
\ := \
\prod_{i=1}^d \varphi_{k_i,s_i}(x_i),
\end{equation*}
and the $d$-variate periodic Faber system $\Ff_d$  by 
\begin{equation*} 
\Ff_d := \{\varphi_{\bk,\bs}: \bs \in Z(\bk),\ \bk \in \ZZdp\}. 
\end{equation*}
For functions $f$ on $\TT$, we define the univariate linear functionals $\lambda_{k,s}$ by 
\begin{equation*} 
\lambda_{k,s}(f) \ := \
- \frac {1}{2} \Delta_{2^{-k}}^2 (f,2^{-k + 1}s),   \, k > 0, \ 
\text{and} \ \lambda_{0,0}(f) \ := \ f(0). 
\end{equation*}
Let the $d$-variate linear functionals
$\lambda_{k,s}$ be defined as
\begin{equation*}   
\lambda_{\bk,\bs}(f) 
\ := \   
\lambda_{k_1,s_1}(\lambda_{k_2,s_2}(... \lambda_{k_d,s_d}(f))),
\end{equation*}
where the univariate functional
$\lambda_{k_i,s_i}$ is applied to the univariate function $f$ by considering $f$ as a 
function of  variable $x_i$ with the other variables held fixed. 
It is well known that the $d$-variate periodic Faber system $\Ff_d$ is a basis in $C(\TTd)$, and a function 
$f \in C(\TTd)$ can be represented by the Faber series 
\begin{equation} \label{eq[FaberRepresentation]}
f
\ = \
\sum_{\bk \in \ZZdp} q_\bk(f) 
\ = \
\sum_{\bk \in \ZZdp} \sum_{\bs \in I(\bk)} \lambda_{\bk,\bs}(f)\varphi_{\bk,\bs}, 
\end{equation}
converging in the norm of $C(\TTd)$. 


Let us consider the case $\ell = 4$ when $M(x)$ is the cubic cardinal B-spine with knot at $0,1,2,3,4$. 
We define a sequence $\Lambda$ of the form  \eqref{def:Lambda}  inducing a quasi-interpolation $Q$ 
via the polynomial $P_\Lambda$ as in \eqref{P_Lambda} which uniquely defines $\Lambda$.
One of possible choices  is
\begin{equation} \nonumber
P_\Lambda(z)
\ = \ 
\frac{z^2}{6}(- z + 8 - z^{-1})
\ = \ 
- \frac{1}{6} z^3 + \frac{8}{6} z^2 - \frac{1}{6} z.
\end{equation}
Then, we have
\begin{equation} \nonumber
\begin{aligned}
P_{\operatorname{even}}(z)
\ &= \
(z-1)^4 P_{\operatorname{even}}^*(z), \quad 
P_{\operatorname{even}}^*(z)
\ = \
\frac{1}{48}z^{-2}\biggl( z^4 + 4z^3 + 8z^2 + 4z + 1 \biggl),\\[1ex]
P_{\operatorname{odd}}(z)
&:= \ 
(z-1)^4 P_{\operatorname{odd}}^*(z), \quad
P_{\operatorname{odd}}^*(z)
:= \ 
\frac{1}{12} \biggl( z^2 + 4z + 1 \biggl),
\end{aligned}
\end{equation}
and 
\begin{equation} \nonumber
\|L_\Lambda\|
\ = \ 
\frac{11}{9}, \quad
\|P_{\operatorname{even}}^*\|
\ = \ 
\frac{3}{8}, \quad
\|P_{\operatorname{odd}}^*\|
\ = \ 
\frac{1}{2}.
\end{equation}

\section{Direct and inverse theorems of sampling representation}
\label{Littlewood-Paley-type theorems}

\subsection{Function spaces of  mixed smoothness} 
\label{Function space}

We define the univariate Bernoulli kernel
\[ 
F_r(x)
:= \
1 + 2\sum_{k=1}^\infty k^{-r}\cos (kx - r \pi/2), \quad x \in \TT,
\]
 and the multivariate Bernoulli kernels as the corresponding tensor products 
\begin{equation} \nonumber
F_r(\bx)
:= \
\prod_{j=1}^d F_r(x_j),\quad \bx \in
\TTd.
\end{equation}
Let $r > 0$ and $0< p \le \infty$. 
Denote by  
$L_p = L_p(\TTd)$ the quasi-normed space 
of functions on $\TTd$ with the $p$th integral quasi-norm 
$\|\cdot\|_p$ for $0 < p < \infty,$ and 
the ess sup-norm $\|\cdot\|_p$ for $p = \infty$.
If $r > 0$ and $1 \le p \le \infty$, we define the Sobolev space $\Wrp$ of mixed smoothness $r$ by
\begin{equation} \label{def[Wrp]}
\Wrp
:= \
\Big\{f \in L_p:\, 
f \, = \, F_r\ast \varphi:= \, \int_{\TTd}F_r(\bx-\by)\varphi(\by)\operatorname{d}\by ,\quad
\|\varphi\|_p < \infty \Big\},
\end{equation}
and $\|f\|_{\Wrp}:= \|\varphi\|_p$ for $f$ represented as in \eqref{def[Wrp]}.
For $1 < p < \infty$, the  space $\Wrp$ coincides with the
set of all $f\in L_p$ such that the norm
\[
\|f\|_{\Wrp} 
:= \
\Big\| \sum_{\bs \in \ZZd} \, \hat{f}(\bs) (1+|s_1|^2)^{r/2} \, \ldots \,
(1+|s_d|^2)^{r/2}
\, e^{\pi i(\bs,\cdot)} \Big\|_p
\]
is finite, where $\hat f(\bs)$ denotes the usual $\bs$th Fourier coefficient of $f$. There are some different equivalent definitions of $\Wrp$, for instance, in terms of Weil fractional derivatives (see, e.g, \cite{Di00}).

We use the notations:
$A_n(f) \ll B_n(f)$ if $A_n(f) \le CB_n(f)$ with 
$C$ an absolute constant not depending on $n$ and/or $f \in W,$ and 
$A_n(f) \asymp B_n(f)$ if $A_n(f) \ll B_n(f)$ and $B_n(f) \ll A_n(f).$
Denote by  $\lfloor y \rfloor$ the integer part of  $y \in \RR$. For a function $f \in L_p$ and a vector
$\bk \in \ZZdp$ we define the set
\[
\Pi(\bk) =\bigl\{\bs \in \ZZd: \lfloor 2^{k_i - 1} \rfloor \le |s_i| < 2^{k_i}, i \in [d] \bigr\},
\]
and the function
\begin{equation} \nonumber
\delta_\bk(f,\bx) :=
\sum_{\bs \in \Pi(\bk)}
\hat f(\bs)e^{\pi i(\bs,\bx)}.
\end{equation}

For the following lemma see \cite{NiN75} (also \cite[Chapter III, 15.2]{BeIlNi78}).

\begin{lemma} \label{lemma[LPtheorem]}
Let $1 < p < \infty$ and $r > 0$. Then we have the following norm equivalence
\begin{equation*} 
\|f\|_{\Wrp}
\ \asymp \ 
\biggl\| \biggl(\sum_{\bk \in \ZZdp} 
 \big|2^{r|\bk|_1}\delta_\bk(f) \big|^2 \biggl)^{1/2}\biggl\|_p,
\quad  \forall f \in \Wrp. 
\end{equation*}
\end{lemma}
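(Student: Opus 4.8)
The plan is to derive this multi-parameter Littlewood--Paley equivalence from the one-dimensional dyadic Littlewood--Paley inequality by a tensor-product iteration, together with the Marcinkiewicz multiplier theorem for multiple Fourier series; this is essentially the argument of \cite{NiN75} (see also \cite{BeIlNi78}).

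First I would reduce to the case $r=0$. For $1<p<\infty$ the norm of $\Wrp$ is equivalent to $\|G_rf\|_p$, where $G_r$ is the Fourier multiplier operator on $\TTd$ with symbol $g_r(\bs):=\prod_{j=1}^d(1+|s_j|^2)^{r/2}$ (this is the second description of $\Wrp$ recalled in Subsection \ref{Function space}). The boxes $\{\Pi(\bk)\}_{\bk\in\ZZdp}$ form a disjoint partition of $\ZZd$ (one checks $\bigcup_{k\ge0}\{s:\lfloor 2^{k-1}\rfloor\le|s|<2^k\}=\ZZ$ coordinatewise), so $\delta_\bk(G_rf)=G_r(\delta_\bk f)$, and on $\Pi(\bk)$ one has $g_r(\bs)\asymp 2^{r|\bk|_1}$ with constants independent of $\bk$. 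Writing $\delta_\bk(G_rf)=2^{r|\bk|_1}T_\bk(\delta_\bk f)$, where $T_\bk$ has symbol $2^{-r|\bk|_1}g_r(\bs)$ on $\Pi(\bk)$ extended to a symbol on $\ZZd$ with uniformly bounded Marcinkiewicz norm, the $\ell^2$-valued version of the Marcinkiewicz multiplier theorem gives
\[
\biggl\| \biggl(\sum_{\bk\in\ZZdp}\big|\delta_\bk(G_rf)\big|^2\biggr)^{1/2}\biggr\|_p
\ \asymp \
\biggl\| \biggl(\sum_{\bk\in\ZZdp}\big|2^{r|\bk|_1}\delta_\bk(f)\big|^2\biggr)^{1/2}\biggr\|_p ,
\]
so it remains to prove $\|h\|_p\asymp\big\|\big(\sum_{\bk}|\delta_\bk(h)|^2\big)^{1/2}\big\|_p$ for $h\in L_p$, $1<p<\infty$, which one then applies to $h=G_rf$ using $\|G_rf\|_p\asymp\|f\|_{\Wrp}$.

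For the remaining $r=0$ estimate I would argue as follows. Given a sign sequence $\varepsilon=(\varepsilon_\bk)_{\bk\in\ZZdp}$ with $\varepsilon_\bk\in\{-1,1\}$, the operator $\sum_\bk\varepsilon_\bk\delta_\bk$ is a Fourier multiplier whose symbol is constant equal to $\varepsilon_\bk$ on each $\Pi(\bk)$, hence satisfies the Marcinkiewicz condition for multiple Fourier series with a bound independent of $\varepsilon$; therefore $\|\sum_\bk\varepsilon_\bk\delta_\bk h\|_p\ll\|h\|_p$ uniformly in $\varepsilon$. Replacing $\varepsilon$ by Rademacher functions, raising to the $p$-th power, integrating in the sign variables, and invoking Khintchine's inequality in $L_p(\TTd)$ yields $\big\|\big(\sum_\bk|\delta_\bk h|^2\big)^{1/2}\big\|_p\ll\|h\|_p$. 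The converse follows by duality: since $h=\sum_\bk\delta_\bk(h)$ in $L_p$ (first for trigonometric polynomials, then by density, using the uniform bound just obtained), for $g$ with $\|g\|_{p'}\le1$ one has $|\langle h,g\rangle|=\big|\sum_\bk\langle\delta_\bk h,\delta_\bk g\rangle\big|\le\big\|\big(\sum_\bk|\delta_\bk h|^2\big)^{1/2}\big\|_p\,\big\|\big(\sum_\bk|\delta_\bk g|^2\big)^{1/2}\big\|_{p'}\ll\big\|\big(\sum_\bk|\delta_\bk h|^2\big)^{1/2}\big\|_p$ by the already-proved inequality for $p'$.

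The main technical point --- and the only real obstacle --- is the verification of the Marcinkiewicz multiplier hypotheses uniformly in $\bk$ (respectively in $\varepsilon$): one has to check the mixed finite-difference bounds of the one-variable factors $2^{-rk_j}(1+|s_j|^2)^{r/2}$ and of the piecewise-constant sign symbols over the dyadic blocks, with the minor nuisance of the floors $\lfloor 2^{k_i-1}\rfloor$ at $k_i\in\{0,1\}$, and then to pass from the scalar Marcinkiewicz theorem to its $\ell^2$-valued form. All of this is classical, so I would merely record the statement with the reference \cite{NiN75,BeIlNi78} rather than reproduce the computation.
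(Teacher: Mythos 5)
Your sketch is sound and is exactly the classical argument (iterated dyadic Littlewood--Paley via randomization/Khintchine and duality, plus a uniform vector-valued Marcinkiewicz multiplier step to absorb the weight $\prod_j(1+|s_j|^2)^{r/2}\asymp 2^{r|\bk|_1}$ on $\Pi(\bk)$) that underlies the sources the paper cites: the paper itself gives no proof of Lemma \ref{lemma[LPtheorem]} and simply quotes it from \cite{NiN75} and \cite[Chapter III, 15.2]{BeIlNi78}. So there is nothing to compare beyond noting that your reduction and the technical point you flag (uniform verification of the multiplier hypotheses and its $\ell^2$-valued form) are precisely what those references carry out, and deferring them to the literature is consistent with how the paper treats this lemma.
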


 Put $P_\bk:= \{\bh \in \RRd:\, |h_i| \le 2^{-k_i}, \ i \in [d]\}$. 
The following lemma has been proven in \cite{U06}.

\begin{lemma} \label{normequivalence[Delta]}
Let $1 < p < \infty$ and $r < \ell$. Then we have the following norm equivalence
\begin{equation*} 
\|f\|_{\Wrp}
\ \asymp \ 
\sum_{e \subset [d]}\, \biggl\| \biggl(\sum_{\bk \in \ZZdpe} 
 \biggl(2^{(r+1)|\bk|_1} \int_{P_{\bk}}\big|\Delta_\bh^{\ell,e}(f)\big|\, \rd \bh \biggl)^2 \biggl)^{1/2}\biggl\|_p,
\quad  \forall f \in \Wrp. 
\end{equation*}
\end{lemma}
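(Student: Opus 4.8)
The plan is to obtain the claimed norm equivalence for $\|f\|_{\Wrp}$ in terms of the mixed differences $\Delta_\bh^{\ell,e}(f)$ by interpolating between Lemma~\ref{lemma[LPtheorem]} and standard estimates relating Fourier blocks $\delta_\bk(f)$ to local averages of mixed differences. More precisely, I would first recall that for $1<p<\infty$ the Littlewood--Paley characterization of Lemma~\ref{lemma[LPtheorem]} reduces the problem to comparing the dyadic square function $\bigl(\sum_{\bk}|2^{r|\bk|_1}\delta_\bk(f)|^2\bigr)^{1/2}$ with the quantity
\begin{equation*}
\sum_{e\subset[d]}\biggl\|\biggl(\sum_{\bk\in\ZZdpe}\biggl(2^{(r+1)|\bk|_1}\int_{P_\bk}\bigl|\Delta_\bh^{\ell,e}(f)\bigr|\,\rd\bh\biggr)^2\biggr)^{1/2}\biggr\|_p .
\end{equation*}
Here the factor $2^{(r+1)|\bk|_1}$ absorbs the normalization $\int_{P_\bk}\rd\bh \asymp 2^{-|\bk|_1}$, so that $2^{(r+1)|\bk|_1}\int_{P_\bk}|\Delta_\bh^{\ell,e}(f)|\,\rd\bh$ is comparable to $2^{r|\bk|_1}$ times the average modulus of the $\ell$th mixed difference at scale $2^{-\bk}$ in the directions of $e$.

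The key technical ingredient, which I would import from \cite{U06} (or reprove by a Fourier-multiplier argument), is the two-sided comparison on the Fourier side: for each fixed $e$, the operator $f\mapsto \Delta_\bh^{\ell,e}(f)$ acts on the block $\delta_{\bk'}(f)$ essentially as multiplication by $\prod_{i\in e}(e^{\pi i s_i h_i}-1)^\ell$, whose size on $\Pi(\bk')$ is $\asymp \prod_{i\in e}\min(1,|s_ih_i|)^\ell$. Averaging over $\bh\in P_\bk$ and summing the geometric-type series over $\bk$ in the directions of $e$ then yields, for a function whose Fourier support lies in $\Pi(\bk')$,
\begin{equation*}
\sum_{e\subset[d]}\biggl(\sum_{\bk\in\ZZdpe}\biggl(2^{(r+1)|\bk|_1}\int_{P_\bk}\bigl|\Delta_\bh^{\ell,e}(\delta_{\bk'}(f))\bigr|\,\rd\bh\biggr)^2\biggr)^{1/2}\asymp 2^{r|\bk'|_1}|\delta_{\bk'}(f)|,
\end{equation*}
using the hypothesis $r<\ell$ to guarantee convergence of the sum over large $\bk$ in each active coordinate (this is exactly where $r<\ell$ enters) and the trivial bound in the inactive coordinates. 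Summing an abstract square function of blocks is controlled by the square function of the full sum via vector-valued Littlewood--Paley/Marcinkiewicz multiplier estimates in $L_p$, $1<p<\infty$, giving both directions of the equivalence when one combines over all $\bk'$.

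The main obstacle I anticipate is the bookkeeping of the mixed structure: one must handle the coordinates in $e$ and those outside $e$ asymmetrically, and the passage from pointwise Fourier-side bounds to $L_p$ norms requires a vector-valued (Fefferman--Stein / Littlewood--Paley) inequality applied coordinate-by-coordinate, with the sum over $e\subset[d]$ producing only a dimensional constant. Care is also needed near the low-frequency end $\bk=\mathbf 0$ (or coordinates $k_i=0$), where $\lfloor 2^{k_i-1}\rfloor=0$ and the difference operator must be treated as the identity on that block, which is consistent with the term $e=\varnothing$ (and, more generally, with restricting $\bk$ to $\ZZdpe$). Once these are in place, chaining the equivalence through Lemma~\ref{lemma[LPtheorem]} delivers the statement; since this is precisely the content attributed to \cite{U06}, I would in fact present it as a citation with a sketch of the multiplier argument rather than a full reproof.
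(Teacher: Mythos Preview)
Your proposal is correct and matches the paper's treatment: the paper does not prove this lemma at all but simply states it as a result taken from \cite{U06}, so your plan to cite \cite{U06} (with an optional multiplier-argument sketch) is exactly what the paper does, only slightly more detailed.
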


For $u \subset [d]$, let
\begin{equation*}
\omega_\ell^u(f,\bt)_p:= \sup_{h_i < t_i, i \in u}\|\Delta_\bh^{\ell,u}(f)\|_p, \ \bt \in {\II}^d,
\end{equation*} 
be the mixed $(\ell,u)$th modulus of smoothness of $f$ (in particular,
 $\omega_l^{\varnothing}(f,t)_p = \|f\|_p$).

If $0 <  p, \theta \le \infty$, 
$r> 0$ and $\ell > r$, 
we introduce the quasi-semi-norm 
$|f|_{B_{p, \theta}^{r,u}}$ for functions $f \in L_p$ by
\begin{equation*} \label{BesovSeminorm}
|f|_{B_{p, \theta}^{r,u} }:= 
\begin{cases}
 \ \left(\int_{{\II}^d} \{ \prod_{i \in u} t_i^{- r}
\omega_\ell^u(f,\bt)_p \}^ \theta \prod_{i \in u} t_i^{-1} \operatorname{d} \bt \right)^{1/\theta}, 
& \theta < \infty, \\
  \sup_{\bt \in {\II}^d} \ \prod_{i \in u} t_i^{-r}\omega_\ell^u(f,\bt)_p,  & \theta = \infty
\end{cases}
\end{equation*}
(in particular, $|f|_{B_{p, \theta}^{\alpha,\emptyset}} = \|f\|_p$).

For $0 <  p, \theta \le \infty$ and $0 < r< l,$ the Besov space 
$B_{p, \theta}^r$ is defined as the set of  functions $f \in L_p$ 
for which the Besov quasi-norm $\|f\|_{B_{p, \theta}^r}$ is finite. 
The  Besov quasi-norm is defined by
\begin{equation*} 
\|f\|_{B_{p, \theta}^r}
:= \ 
 \sum_{u \subset [d]} |f|_{B_{p, \theta}^{r,u} }.
\end{equation*}



\subsection{Maximal functions}
\label{Maximal functions}

For a locally integrable function $f$ on $\TT$,  the Hardy-Littlewood maximal function is defined as
\[
M(f,x)
:= \
\sup_{h > 0} \frac{1}{2h} \int_{x-h}^{x+h} |f(y)| \, \rd y.
\]
For $i \in [d]$ a locally integrable function $f$ on $\TTd$,  the partial Hardy-Littlewood maximal function 
$M_i(f)$ is defined as the univariate maximal function in variable $x_i$ by considering $f$ as a univariate
function of $x_i$ with the other variables held fixed. The mixed Hardy-Littlewood maximal function is defined as 
\[
\bM(f)
:= \
M_d(M_{d-1}( \cdots M_1(f)\cdots )).
\]

From a result in \cite{FS71} follows 

\begin{lemma} \label{Hardy-LittlewoodM}
Let $1 < p < \infty$ and $\big(f_\bk\big)_{\bk \in \ZZdp}$ be a sequence of locally integrable functions on $\TTd$.
The we have
\[
\biggl\|\biggl(\sum_{\bk \in \ZZdp} |\bM(f_\bk)|^2\biggl)^{1/2}\biggl\|_p
\ \ll
\biggl\|\biggl(\sum_{\bk \in \ZZdp} |f_\bk|^2\biggl)^{1/2}\biggl\|_p.
\] 
\end{lemma}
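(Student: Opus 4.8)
The plan is to deduce the inequality from the one-dimensional Fefferman--Stein vector-valued maximal inequality of \cite{FS71}, applied successively in each coordinate direction. Recall that this result provides, for $1 < p < \infty$, a constant $C = C(p)$ such that for every countable family $(g_j)_j$ of locally integrable functions on $\TT$,
\[
\biggl\| \biggl( \sum_j |M(g_j)|^2 \biggr)^{1/2} \biggr\|_{L_p(\TT)}
\ \le \
C \, \biggl\| \biggl( \sum_j |g_j|^2 \biggr)^{1/2} \biggr\|_{L_p(\TT)};
\]
the periodic version follows from the classical one on $\RR$ by periodization, and we may index the family by $\ZZdp$.

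First I would promote this to a bound on $\TTd$ for a single partial maximal operator $M_i$. Fix $i \in [d]$ and write $\bx = (x_i, \bx')$, where $\bx'$ collects the remaining $d-1$ variables. Applying the displayed inequality to the family $\bigl(f_\bk(\cdot,\bx')\bigr)_{\bk \in \ZZdp}$ in the variable $x_i$ with $\bx'$ frozen, then raising both sides to the power $p$ and integrating in $\bx'$ --- which is legitimate by Tonelli's theorem, since the integrands are nonnegative and jointly measurable and the constant $C(p)$ is independent of $\bx'$ --- I obtain
\[
\biggl\| \biggl( \sum_{\bk \in \ZZdp} |M_i(f_\bk)|^2 \biggr)^{1/2} \biggr\|_p
\ \le \
C \, \biggl\| \biggl( \sum_{\bk \in \ZZdp} |f_\bk|^2 \biggr)^{1/2} \biggr\|_p,
\qquad i \in [d].
\]
Since $\bM = M_d \circ M_{d-1} \circ \cdots \circ M_1$, I then iterate: apply this bound with $M_1$ to the family $(f_\bk)$, next with $M_2$ to $(M_1 f_\bk)$, and so on down to $M_d$. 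After $d$ steps this yields the stated inequality with implied constant $C(p)^d$, depending only on $p$ and $d$.

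The only step requiring care is the promotion of the one-dimensional inequality to $\TTd$ for $M_i$: one has to verify that $M_i(f_\bk)$ is jointly measurable in all $d$ variables, so that Tonelli's theorem applies, and that the Fefferman--Stein constant is uniform in the frozen variables $\bx'$ --- both of which are immediate. Beyond that the argument is purely formal, the point being that the $\ell^2$-valued maximal bound is stable under composition of operators and $\bM$ is precisely such a composition of one-dimensional operators; treating the variables one at a time in this way also sidesteps the absence of good endpoint behaviour for the mixed maximal function itself.
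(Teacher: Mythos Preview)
Your argument is correct and is precisely the standard derivation the paper has in mind: the paper does not give a proof at all but merely states that the lemma ``follows from a result in \cite{FS71}'', i.e., from the Fefferman--Stein vector-valued maximal inequality. Your coordinate-by-coordinate iteration of that one-dimensional bound, with Tonelli to integrate out the frozen variables, is exactly how one fills in this citation, so there is nothing to compare.
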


Let $\bm, \bn \in \RRd$ with positive components. 
For a continuous function $f$ on $\TTd$,  the Peetre maximal function is defined as
\[
P_{\bm,\bn}(f,\bx)
:= \
\sup_{\bh \in \TTd} \frac{|f(\bx + \bh)|}{(1 + m_1|h_1|)^{n_1} \cdots  (1 + m_d|h_d|)^{n_d}}.
\]
If $\bn = (n,...,n)$, we write $P_{\bm,\bn}(f,\bx):= P_{\bm,n}(f,\bx)$.

For the proof of the following lemma see \cite[Lemma 2.3.3]{ST87} and \cite[Lemma 3.3.1]{U06}.
 
\begin{lemma} \label{Delta<P}
For the univariate trigonometric polynomial $f$ of degree $\le m$, we have 
\[
|\Delta^\ell_h(f,x)|
\ \le \
C\, \min(1,|mh|^\ell)\max(1,|mh|^n)\,P_{m,n}(f,x),
\]
where $C > 0$ is a constant independent of $f,m,h$.
\end{lemma}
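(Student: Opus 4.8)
The statement to prove is Lemma~\ref{Delta<P}, the Peetre-type estimate
\[
|\Delta^\ell_h(f,x)| \le C\,\min(1,|mh|^\ell)\max(1,|mh|^n)\,P_{m,n}(f,x)
\]
for univariate trigonometric polynomials $f$ of degree at most $m$. Let me sketch how I would prove this.

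\medskip

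\textbf{Plan.} The proof splits according to whether $|mh|$ is small or large, since $\min(1,|mh|^\ell)$ and $\max(1,|mh|^n)$ behave differently in the two regimes.

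First, the easy regime $|mh| \ge 1$. Here $\min(1,|mh|^\ell) = 1$ and $\max(1,|mh|^n) = |mh|^n$, so it suffices to show $|\Delta^\ell_h(f,x)| \ll |mh|^n P_{m,n}(f,x)$. But from the definition \eqref{def[Delta_h^ell]}, $\Delta^\ell_h(f,x)$ is a fixed linear combination (with binomial coefficients, total absolute weight $2^\ell$) of the values $f(x+jh)$, $j=0,\dots,\ell$. For each such term, the definition of the Peetre maximal function gives
\[
|f(x+jh)| \le (1+m|jh|)^n P_{m,n}(f,x) \le (1 + \ell m|h|)^n P_{m,n}(f,x) \ll |mh|^n P_{m,n}(f,x),
\]
using $|mh| \ge 1$. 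Summing over $j$ gives the claim with a constant depending only on $\ell$ and $n$.

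\medskip

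Second, the harder regime $|mh| < 1$, where we must gain the factor $|mh|^\ell$ (and $\max(1,|mh|^n)=1$), i.e.\ show $|\Delta^\ell_h(f,x)| \ll |mh|^\ell P_{m,n}(f,x)$. This is where the polynomial structure of $f$ enters, and I expect it to be the main obstacle. The standard route: express the $\ell$-th difference via an $\ell$-fold integral of the $\ell$-th derivative,
\[
\Delta^\ell_h(f,x) = \int_0^h \!\!\cdots \int_0^h f^{(\ell)}(x + u_1 + \cdots + u_\ell)\, du_1 \cdots du_\ell,
\]
so that $|\Delta^\ell_h(f,x)| \le |h|^\ell \sup_{|\xi| \le \ell|h|} |f^{(\ell)}(x+\xi)|$. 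Now I must (i) control the derivative $f^{(\ell)}$ by $f$ itself using Bernstein's inequality for trigonometric polynomials, which supplies a factor $m^\ell$, turning $|h|^\ell \sup|f^{(\ell)}|$ into $\ll |mh|^\ell \sup|f|$ over a slightly enlarged neighborhood; and (ii) replace that local supremum of $|f|$ by $P_{m,n}(f,x)$, paying only a bounded factor since the shift $\xi$ satisfies $|\xi| \le \ell|h| \le \ell/m$, hence $(1+m|\xi|)^n \le (1+\ell)^n = O(1)$. Carefully combining, $|\Delta^\ell_h(f,x)| \le |h|^\ell \sup_{|\xi|\le\ell|h|}|f^{(\ell)}(x+\xi)| \ll |h|^\ell m^\ell \sup_{|\xi|\le\ell|h|}|f(x+\xi)| \ll |mh|^\ell P_{m,n}(f,x)$.

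\medskip

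\textbf{Main obstacle.} The delicate point is step (i): applying Bernstein's inequality correctly. A clean way is to note $\Delta^\ell_h f$ is itself a trigonometric polynomial of degree $\le m$, apply an $\ell$-fold Bernstein bound directly, or — more in the spirit of the Peetre-maximal-function machinery — use that $f = f \ast V_m$ for a de la Vallée Poussin-type kernel $V_m$ with $\|V_m'\|_1 \ll m$ (and $\|V_m\|_1 = O(1)$), so that $f^{(\ell)} = f \ast V_m^{(\ell)}$ and one estimates $|\Delta^\ell_h f(x)| = |(\Delta^\ell_h V_m) \ast f (x)|$ by $\int |\Delta^\ell_h V_m(y)|\,|f(x-y)|\,dy$, bounding $|f(x-y)| \le (1+m|y|)^n P_{m,n}(f,x)$ and checking $\int |\Delta^\ell_h V_m(y)|(1+m|y|)^n\,dy \ll \min(1,|mh|^\ell)\max(1,|mh|^n)$ by the usual kernel estimates (splitting the integral into $|y| \le \ell|h|$ and $|y| > \ell|h|$, and using $|\Delta^\ell_h V_m(y)| \le |h|^\ell \sup |V_m^{(\ell)}|$ near the origin and a tail bound from smoothness of $V_m$). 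I would follow the references cited (\cite[Lemma 2.3.3]{ST87}, \cite[Lemma 3.3.1]{U06}); the argument is standard and the only real care needed is tracking how the $\max(1,|mh|^n)$ factor arises from the polynomial growth weight $(1+m|y|)^n$ in the convolution when $|mh|$ is large.
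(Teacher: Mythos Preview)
The paper does not give its own proof of this lemma; it simply refers to \cite[Lemma~2.3.3]{ST87} and \cite[Lemma~3.3.1]{U06}. Your convolution approach via a de~la~Vall\'ee~Poussin--type kernel is exactly the argument carried out in those references, so in that sense your proposal matches the paper.

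One point deserves flagging. In your first pass at the regime $|mh|<1$ you write that Bernstein's inequality ``turns $|h|^\ell\sup|f^{(\ell)}|$ into $\ll|mh|^\ell\sup|f|$ over a slightly enlarged neighborhood'', and then plan to bound that \emph{local} supremum by $P_{m,n}(f,x)$. As written this step does not work: the classical Bernstein inequality $\|f^{(\ell)}\|_\infty\le m^\ell\|f\|_\infty$ is global, and there is no local version of the form $\sup_{|\xi|\le\ell|h|}|f^{(\ell)}(x+\xi)|\ll m^\ell\sup_{|\xi|\le C|h|}|f(x+\xi)|$ with a fixed $C$ (take $f(t)=\sin(mt)$ at $x=0$ with $\ell$ odd and $|mh|\to 0$). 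What is true---and what actually drives the proof---is the \emph{Peetre--Bernstein} inequality $P_{m,n}(f^{(\ell)},x)\ll m^\ell P_{m,n}(f,x)$, which is itself proved via the convolution representation $f=f\ast V_m$. You correctly identify this as the ``main obstacle'' and supply the right remedy in your last paragraph: bound $|\Delta^\ell_h f(x)|\le\int|\Delta^\ell_h V_m(y)|\,|f(x-y)|\,\rd y$, insert $|f(x-y)|\le(1+m|y|)^nP_{m,n}(f,x)$, and verify the kernel bound $\int|\Delta^\ell_h V_m(y)|(1+m|y|)^n\,\rd y\ll\min(1,|mh|^\ell)\max(1,|mh|^n)$. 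With that correction in place your argument is complete and coincides with the standard one.
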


For the proof of the following lemma see \cite{U06}.

\begin{lemma} \label{PeetreM}
Let $1 < p < \infty$ and $\big(f_\bk\big)_{\bk \in \ZZdp}$ be a sequence of trigonometric polynomials $f_\bk$ of 
degree $\bm^{(\bk)}$, and $\bn^{(\bk)}$ be such that $n^{(\bk)}_i > \max(\frac{1}{p},\frac{1}{2})$, $i \in [d]$. Then we have
The we have
\[
\biggl\|\biggl(\sum_{\bk \in \ZZdp} |P_{\bm^{(\bk)},\bn^{(\bk)}}(f_\bk)|^2\biggl)^{1/2}\biggl\|_p
\ \le
C\, \biggl\|\biggl(\sum_{\bk \in \ZZdp} |f_\bk|^2\biggl)^{1/2}\biggl\|_p,
\]
where $C > 0$ is a constant independent of $f$ and $\big(\bm^{(\bk)}\big)_{\bk \in \ZZdp}$. 
\end{lemma}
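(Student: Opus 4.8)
The plan is to dominate the Peetre maximal function pointwise by the mixed Hardy--Littlewood maximal function $\bM$ and then to apply the vector-valued Fefferman--Stein inequality from \cite{FS71}, of which Lemma~\ref{Hardy-LittlewoodM} is the $(s,t)=(p,2)$ instance. The first step is to fix an auxiliary exponent $a$. Since $n^{(\bk)}_i>\max(\tfrac1p,\tfrac12)=1/\min(p,2)$ for all $\bk$ and $i$, I would choose $a$ with
\[
\sup_{\bk,\,i}\frac{1}{n^{(\bk)}_i}\ <\ a\ <\ \min(p,2),
\]
so that simultaneously $a\,n^{(\bk)}_i>1$ for every $\bk,i$ and both $p/a>1$ and $2/a>1$. (Such an $a$ exists as soon as the $n^{(\bk)}_i$ are bounded below away from $\max(\tfrac1p,\tfrac12)$, which is the case in the applications of the lemma; the resulting constant is then allowed to depend on that lower bound but not on $\bm^{(\bk)}$ or $f$.)

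The heart of the matter is the pointwise estimate
\[
P_{\bm,\bn}(g,\bx)\ \le\ C\,\bigl(\bM(|g|^{a})(\bx)\bigr)^{1/a},\qquad \bx\in\TTd,
\]
for a trigonometric polynomial $g$ of degree $\le m_i$ in each variable $x_i$. I would derive it from the sharper bound $|g(\bx+\bh)|^{a}\le C\prod_{i=1}^d(1+m_i|h_i|)\,\bM(|g|^{a})(\bx)$: dividing by $\prod_i(1+m_i|h_i|)^{a n_i}$ and using $a n_i>1$ so that each factor $(1+m_i|h_i|)^{1-a n_i}\le1$, then taking the supremum over $\bh$ and the $1/a$-th power, yields the displayed estimate. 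To obtain the sharper bound I would write $g=g\ast(V_{m_1}\otimes\cdots\otimes V_{m_d})$ with a tensor-product de la Vallée Poussin-type reproducing kernel satisfying $|V_{m_i}(t)|\le C\,m_i(1+m_i|t|)^{-\nu}$ for a sufficiently large $\nu$, estimate $|g(\bx+\bh)|$ by the absolute value of this convolution (applying Hölder's inequality when $a\ge1$, or subadditivity of $t\mapsto t^{a}$ when $a\le1$), and then dominate the resulting tensor convolution of $|g|^{a}$ against the kernel tails by the iterated maximal function $\bM(|g|^{a})$, the shift by $\bh$ producing exactly the factor $\prod_i(1+m_i|h_i|)$. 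This mixed pointwise step is the main obstacle: a naive iteration of the one-dimensional Peetre-to-maximal estimate fails because $|g|^{a}$, and hence $M_1(|g|^{a})$, is no longer band-limited in the remaining variables, so one must use the \emph{tensor} reproducing kernel rather than successive one-variable reductions; the requisite one-dimensional kernel estimates are exactly those underlying Lemma~\ref{Delta<P} (see \cite{ST87,U06}).

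Finally I would apply the pointwise estimate to each $f_\bk$ and carry out the exponent bookkeeping. Setting $g_\bk:=|f_\bk|^{a}$ gives
\[
\biggl\|\biggl(\sum_{\bk\in\ZZdp}\bigl|P_{\bm^{(\bk)},\bn^{(\bk)}}(f_\bk)\bigr|^2\biggr)^{1/2}\biggr\|_p\ \le\ C\,\biggl\|\biggl(\sum_{\bk\in\ZZdp}\bigl(\bM(g_\bk)\bigr)^{2/a}\biggr)^{1/2}\biggr\|_p.
\]
Raising the right-hand side to the $a$-th power rewrites it as $\bigl\|(\sum_\bk(\bM g_\bk)^{2/a})^{a/2}\bigr\|_{p/a}$, to which the Fefferman--Stein inequality with exponents $s=p/a>1$ and $t=2/a>1$ applies and gives the bound $C\bigl\|(\sum_\bk|g_\bk|^{2/a})^{a/2}\bigr\|_{p/a}$. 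Since $|g_\bk|^{2/a}=|f_\bk|^2$, this equals $\bigl(\bigl\|(\sum_\bk|f_\bk|^2)^{1/2}\bigr\|_p\bigr)^{a}$; taking the $1/a$-th power undoes the initial raising and produces
\[
\biggl\|\biggl(\sum_{\bk\in\ZZdp}\bigl|P_{\bm^{(\bk)},\bn^{(\bk)}}(f_\bk)\bigr|^2\biggr)^{1/2}\biggr\|_p\ \le\ C\,\biggl\|\biggl(\sum_{\bk\in\ZZdp}|f_\bk|^2\biggr)^{1/2}\biggr\|_p,
\]
which is the assertion, with a constant depending only on $p$, $d$ and $a$ but not on the degrees $\bm^{(\bk)}$ or on $f$.
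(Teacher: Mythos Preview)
The paper does not prove this lemma at all; it merely refers the reader to \cite{U06}. Your proposal---dominate the Peetre maximal function pointwise by $\bigl(\bM(|g|^{a})\bigr)^{1/a}$ for a trigonometric polynomial $g$ of degree $\le m_i$ in each variable, with $a$ chosen so that $a\,n^{(\bk)}_i>1$ and $a<\min(p,2)$, and then apply the vector-valued Fefferman--Stein inequality with exponents $(p/a,2/a)$---is exactly the standard argument that the cited references \cite{ST87,U06} carry out, so your proof is in line with what the paper defers to.

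Two minor comments. First, as you yourself flag, the argument requires a \emph{uniform} lower bound $\inf_{\bk,i} n^{(\bk)}_i>\max(\tfrac1p,\tfrac12)$ rather than the merely pointwise hypothesis stated in the lemma; the constant $C$ then depends on this infimum. In the paper's sole application (the proof of Theorem~\ref{DirectThm}) all $n^{(\bk)}_i$ equal a fixed $\nu$, so this causes no trouble. Second, your diagnosis of the ``main obstacle'' in the mixed case is correct: one cannot iterate the one-dimensional Peetre-to-maximal bound naively because $|g|^{a}$ loses band-limitedness, and the remedy via a tensor-product reproducing kernel with good pointwise decay is the right one; this is precisely how \cite{U06} handles it.
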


\subsection{Direct theorem of sampling representation}
In this subsection we prove the following direct theorem of B-spline sampling representation in Sobolev space of mixed  smoothness.
\begin{theorem} \label{DirectThm}
Let $1 < p < \infty$ and $\max(\frac{1}{p},\frac{1}{2}) < r < \ell$. Then every function $f \in \Wrp$ can be represented as the series \eqref{eq:B-splineRepresentation} converging in the norm of $\Wrp$, and there holds
the inequality
\begin{equation} \label{DirectIneq}
\biggl\| \biggl(\sum_{\bk \in \ZZdp} 
\Big|2^{r|\bk|_1} q_\bk(f)\Big|^2 \biggl)^{1/2}\biggl\|_p
\ \ll \
\|f\|_{\Wrp}. 
\end{equation}
\end{theorem}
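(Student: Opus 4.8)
The plan is to combine three ingredients already at hand: the explicit formula for the coefficients in Theorem~\ref{theorem[c_{k,s}]}, which expresses $c_{\bk,\bs}(f)$ (for $\bk \in \ZZdp(u)$) as a fixed linear combination of shifted $\ell$th differences $\Delta_{\bh^{(\bk)}}^{\ell,u}(f)$ evaluated at the grid points $\bs\bh^{(\bk)}$; the maximal-function machinery of Subsection~\ref{Maximal functions} (Lemmas~\ref{Hardy-LittlewoodM}, \ref{Delta<P}, \ref{PeetreM}); and the difference-type characterization of $\|f\|_{\Wrp}$ in Lemma~\ref{normequivalence[Delta]}. First I would decompose $q_\bk(f) = \sum_{u \subset [d]} \sum_{\bs} c_{\bk,\bs}^{(u)}(f)\,N_{\bk,\bs}$ according to which coordinates carry a positive index (so a given $\bk$ contributes to the $u$ with $u = \{i : k_i > 0\}$), and observe that, thanks to the $u$-sum in Lemma~\ref{normequivalence[Delta]}, it suffices to bound each $u$-piece separately. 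Fixing $u$, I would pointwise estimate $|q_\bk(f,\bx)|$: since $N_{\bk,\bs}$ are nonnegative, locally supported, form a partition of unity, and $|c_{\bk,\bs}(f)|$ is controlled by a bounded number of values of $|\Delta_{\bh^{(\bk)}}^{\ell,u}(f)|$ near $\bx$, one gets $|q_\bk(f,\bx)| \ll \sup\{|\Delta_{\bh}^{\ell,u}(f,\by)| : |h_i| \le C2^{-k_i},\ \by \text{ near } \bx\}$, and this supremum is in turn dominated by a mixed Hardy--Littlewood-type maximal function of $\by \mapsto \sup_{|h_i|\le C2^{-k_i}} |\Delta_\bh^{\ell,u}(f,\by)|$ evaluated at $\bx$ (replacing the pointwise sup over a small box by an average, then by $\bM$).

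Next I would pass from the sup over $\bh$ in the small box to the integral average $2^{|\bk|_1}\int_{P_\bk} |\Delta_\bh^{\ell,u}(f,\by)|\,\rd\bh$ that appears in Lemma~\ref{normequivalence[Delta]}. The natural way to do this is via a Peetre-maximal-function argument: first reduce to trigonometric polynomials (the $q_\bk(f)$, or suitable frequency-truncated pieces of $f$, are effectively of dyadic degree $\asymp 2^\bk$), then use Lemma~\ref{Delta<P} to bound $|\Delta_h^\ell|$ by $\min(1,|mh|^\ell)\,P_{m,n}(f)$, which makes the sup over the small box comparable to the average over it up to the harmless factor absorbed by the $2^{|\bk|_1}$ normalization. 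Then Lemma~\ref{PeetreM} (valid precisely under the hypothesis $r > \max(1/p,1/2)$, which is where that lower bound on $r$ enters) converts the $\ell^2(\bk)$-sum of Peetre maximal functions back into the $\ell^2(\bk)$-sum of the functions themselves, with an $L_p$ bound. Assembling:
\begin{equation*}
\biggl\| \biggl(\sum_{\bk \in \ZZdpe}\Big|2^{r|\bk|_1} q_\bk(f)\Big|^2\biggl)^{1/2}\biggl\|_p
\ \ll \
\biggl\| \biggl(\sum_{\bk \in \ZZdpe}\Bigl(2^{(r+1)|\bk|_1}\!\int_{P_\bk}|\Delta_\bh^{\ell,e}(f)|\,\rd\bh\Bigr)^{2}\biggr)^{1/2}\biggr\|_p,
\end{equation*}
and summing over $e$ and invoking Lemma~\ref{normequivalence[Delta]} gives \eqref{DirectIneq}.

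Finally, the convergence of the series \eqref{eq:B-splineRepresentation} in the $\Wrp$-norm would follow from the established inequality: \eqref{DirectIneq} shows the square-function sum is finite, hence the tails $\sum_{|\bk|_1 > N} q_\bk(f)$ have $\Wrp$-norm (estimated again through Lemmas~\ref{normequivalence[Delta]}/\ref{PeetreM} applied to the tail) tending to $0$; combined with Lemma~\ref{lemma[representation]} (convergence in $C(\TTd)$, hence identification of the limit with $f$) this yields convergence in $\Wrp$. The main obstacle I anticipate is the passage from the pointwise/sup estimate for $|q_\bk(f)|$ to the integral-average form matching Lemma~\ref{normequivalence[Delta]}: one must handle the mixed (tensor-product) structure of the difference operators coordinate by coordinate, keep track of which coordinates are ``active'' (the set $u$), and justify the reduction to trigonometric polynomials so that Lemmas~\ref{Delta<P} and \ref{PeetreM} apply — the bookkeeping of the mixed maximal functions $\bM$ and the Peetre functions $P_{\bm^{(\bk)},\bn^{(\bk)}}$ acting on the right dyadic blocks is where the technical care is needed. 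Everything else is a routine application of the partition-of-unity and local-support properties of the B-splines $N_{\bk,\bs}$.
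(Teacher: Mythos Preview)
Your plan correctly identifies the coefficient formula (Theorem~\ref{theorem[c_{k,s}]}) and the maximal-function lemmas as the essential tools, but the route you propose has a real gap at the step you yourself flag as the ``main obstacle''. You want to bound $|q_\bk(f,\bx)|$, which is controlled by a \emph{supremum} of $|\Delta_{\bh}^{\ell,u}(f,\by)|$ over $\bh$ in a box of scale $2^{-\bk}$ and $\by$ near $\bx$, by the \emph{integral average} $2^{|\bk|_1}\int_{P_\bk}|\Delta_\bh^{\ell,u}(f,\cdot)|\,\rd\bh$ that appears in Lemma~\ref{normequivalence[Delta]}. For a general $f\in\Wrp$ this inequality goes the wrong way (sup dominates average, not conversely), and your suggestion to ``reduce to trigonometric polynomials'' is not made precise: neither $f$ nor $q_\bk(f)$ is a trigonometric polynomial of degree $\asymp 2^{\bk}$, so Lemma~\ref{Delta<P} does not apply directly to them. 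Without a concrete frequency decomposition at this point, the argument stalls.

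The paper's proof takes a different route and avoids Lemma~\ref{normequivalence[Delta]} altogether. It decomposes $f=\sum_{\bm\in\ZZd}\delta_{\bk+\bm}(f)$ into dyadic Littlewood--Paley blocks (each a genuine trigonometric polynomial of degree $\le 2^{\bk+\bm}$), feeds $f_{\bk+\bm}:=\delta_{\bk+\bm}(f)$ through the coefficient formula and Lemma~\ref{Delta<P} to obtain the pointwise bound
\[
|q_\bk^*(f_{\bk+\bm})(\bx)|\ \ll\ P_{2^{\bk+\bm},\nu}(f_{\bk+\bm},\bx)\,\prod_{i=1}^d \min(1,2^{\ell m_i})\max(1,2^{\nu m_i}),
\]
with $\max(1/p,1/2)<\nu<r$, then applies Lemma~\ref{PeetreM} and the trigonometric Littlewood--Paley equivalence (Lemma~\ref{lemma[LPtheorem]}, not Lemma~\ref{normequivalence[Delta]}) to each $\bm$-term, and finally sums the geometric series $\sum_{\bm}\prod_i 2^{-(r-\nu)m_i^+ -(\ell-r)m_i^-}<\infty$. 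The key idea you are missing is this explicit $\bm$-shift decomposition of $f$ relative to the scale $\bk$, which converts the sup-versus-average problem into a summable off-diagonal estimate.
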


\begin{proof} Let $f \in \Wrp$.
Put 
\begin{equation} \label{[I^*(bk)]}
I^*(\bk):= \{\bs \in I(\bk): s_i = 0,\ell, 2\ell,...,(2^k - 1)\ell,\ i \in [d]\},
\end{equation}
 and
$I_\bn(\bk):= \bn + I^*(\bk)$ for $\bn \in \{0,..,\ell - 1\}^d$.  Then we can split $q_\bk(f)$ into a sum as
\[
q_\bk(f)
\ = \
\sum_{\bn \in \{0,..,\ell - 1\}^d}q_\bk^\bn(f)
\ = \
\sum_{\bn \in \{0,..,\ell - 1\}^d}
\sum_{\bs \in I_\bn(\bk)} c_{\bk,\bs}(f)\, N_{\bk,\bs}.
\] 
Hence,
\begin{equation*} 
\biggl\| \biggl(\sum_{\bk \in \ZZdp} 
 \Big|2^{r|\bk|_1} q_\bk(f) \Big|^2 \biggl)^{1/2}\biggl\|_p
\ \le \
\sum_{\bn \in \{0,..,\ell - 1\}^d}
\biggl\| \biggl(\sum_{\bk \in \ZZdp} 
\Big|2^{r|\bk|_1} q_\bk^\bn(f)\Big|^2 \biggl)^{1/2}\biggl\|_p
\end{equation*}
and consequently, to prove \eqref{DirectIneq} it is sufficient to show that each term in sum in the right hand side
$\ll \|f\|_{\Wrp}$. We prove this for instance, for the term corresponding to $\bn = {\bf 0}$. Let us rewrite the inequality to be proven in the following more convenient form
\begin{equation} 
A(f)
:= \
\biggl\| \biggl(\sum_{\bk \in \ZZdp} 
\Big|2^{r|\bk|_1} q_\bk^*(f)\Big|^2 \biggl)^{1/2}\biggl\|_p
\ = \ 
\biggl\| \biggl(\sum_{\bk \in \ZZdp} 
\biggl|2^{r|\bk|_1}\sum_{\bs \in I^*(\bk)} c_{\bk,\bs}(f)\, N_{\bk,\bs} \biggl|^2 \biggl)^{1/2}\biggl\|_p
\ \ll \
\|f\|_{\Wrp}.
\end{equation}
For a vector
$\bk \in \ZZdp$ we define the function
\begin{equation}
f_\bk :=
\begin{cases}
\delta_\bk(f), \ & \bk \in \ZZdp,
\\[1.5ex]
0, \ & \text{otherwise}.
\end{cases}
\end{equation}
Notice that since $f \in \Wrp$ with $r > \frac{1}{p}$, we can write for every $\bk \in \ZZdp$,
\[
f(\bx)
\ = \
\sum_{\bm \in \ZZd} f_{\bk + \bm}(\bx), \quad \forall \bx \in \TTd,
\] 
which yields the inequality
\begin{equation} \label{DirectIneq-q^*(1)}
\biggl\| \biggl(\sum_{\bk \in \ZZdp} 
 \biggl|2^{r|\bk|_1}\sum_{\bs \in I^*(\bk)} c_{\bk,\bs}(f)\, N_{\bk,\bs} \biggl|^2 \biggl)^{1/2}\biggl\|_p
\ \le \
\sum_{\bm \in \ZZd}
\biggl\|\biggl(\sum_{\bk \in \ZZdp} \biggl|2^{r|\bk|_1}\sum_{\bs \in I^*(\bk)} c_{\bk,\bs}(f_{\bk + \bm})\, 
N_{\bk,\bs} \biggl|^2 \biggl)^{1/2}\biggl\|_p.
\end{equation} 
We give a preliminary estimate for the terms in the right hand side
\begin{equation} \label{r}
 A_\bm(f)
 := \
\biggl\|\biggl(\sum_{\bk \in \ZZdp}\Big|2^{r|\bk|_1} q_\bk^*(f_{\bk + \bm})\, 
\Big|^2 \biggl)^{1/2}\biggl\|_p
\ = \
\biggl\|\biggl(\sum_{\bk \in \ZZdp} \biggl|2^{r|\bk|_1}\sum_{\bs \in I^*(\bk)} c_{\bk,\bs}(f_{\bk + \bm})\, 
N_{\bk,\bs} \biggl|^2 \biggl)^{1/2}\biggl\|_p.
\end{equation}
In the next step, we establish an estimate for $q_\bk^*(f_{\bk + \bm})$. 
Denote by $\sigma(\bk,\bs)$ the support of the B-spline $N_{\bk,\bs}$. Then by the construction, 
for every $\bk \in \ZZdp$, the intersection of the interiors of  $\sigma(\bk,\bs)$ and  $\sigma(\bk,\bs')$ 
is empty for different $\bs, \bs' \in I^*(\bk)$, and 
\begin{equation} \label{TTd=cup}
\TTd 
\ = \ 
\cup_{\bs \in I^*(\bk)} \sigma(\bk,\bs).
\end{equation} 
Hence, we derive that
\begin{equation} \label{q_bk^*<(1)}
|q_\bk^*(f_{\bk + \bm})(\bx)|
\ \le \ 
|c_{\bk,\bs}(f_{\bk + \bm})|
\max_{\by \in \sigma(\bk,\bs)} N_{\bk,\bs}(\by)
\ \ll \
|c_{\bk,\bs}(f_{\bk + \bm})|, \quad \forall \bx \in \sigma(\bk,\bs).
\end{equation} 
Fix a number $\nu > 0$ such that $\max(\frac{1}{p},\frac{1}{2}) < \nu < r$. From the last inequality we want to  the following inequality 
\begin{equation} \label{q_bk^*<(2)}
q_\bk^*(f_{\bk + \bm})(\bx)
\ \ll \
P_{2^{\bk + \bm}, \nu}(f_{\bk + \bm},\bx) \prod_{i=1}^{d} \min(1,2^{\ell m_i}) \max(1,2^{\nu m_i}), \quad
\forall \bx \in \TTd, \ \forall \bk \in \ZZdp,  \ \forall \bm \in \ZZd.
\end{equation}
We first obtain the univariate case of this inequality which is of the form 
\begin{equation} \label{q_k^*<(1)}
q_k^*(f_{k + m})(x)
\ \ll \
P_{2^{k + m}, \nu}(f_{k + m},x) \min(1,2^{\ell m}) \max(1,2^{\nu m}), \quad
\forall x \in \TT, \ \forall k \in \ZZ_+,  \ \forall m \in \ZZ.
\end{equation}
from the inequality for every $k \in \ZZ_+$,
\begin{equation} \label{q_k^*<(2)}
|q_k^*(f_{k + m})(x)|
\ \ll \
|c_{k,s}(f_{k + m})|, \quad \forall x \in \sigma(k,s), \ \forall s \in I^*(k).
\end{equation}
Let $x \in \TT$ and $k \in \ZZ_+$ be given.  Then by \eqref{TTd=cup} there is a $s \in I^*(k)$ such that 
$x \in \sigma(k,s)$. Notice that $|x - sh^{(k)}| \le 2^{-k}$.
If $k=0$, then by Lemma \ref{lemma[Q_bk^even&odd]} we have 
\[
c_{0,s}(f_{k + m})  = T_{h^{(0)}}^{[P_\Lambda]}(f_{k + m})(sh^{(0)}),
\] and therefore,
\[
|c_{k,s}(f_{k + m})|
\ \ll \
|f_{k + m}(sh^{(0)})|
\ \le \
\sup_{|y| \le 1}|f_{k + m}(x + y)|
\ \le \
\sup_{|y| \le 1}\frac{|f_{k + m}(x + y)|}{\big(1 + 2^k|y|\big)^\nu}
\ \le \
P_{2^k,\nu}(x).
\] 
If $k>0$, by Lemma \ref{lemma[Q_bk^even&odd]} we have 
\begin{equation} \nonumber
c_{k,s}(f) 
 \ = \
\begin{cases}
T_{h^{(k)}}^{[P_{\operatorname{even}}^*]}\circ \Delta_{h^{(k)}}^{\ell} (f)(sh^{(k)}),  & s \ \text{even} \\[1.5ex]
T_{h^{(k)}}^{[P_{\operatorname{odd}}^*]}\circ \Delta_{h^{(k)}}^{\ell} (f)(sh^{(k)}), & s \ \text{odd}.
\end{cases} 
\end{equation}
and therefore, 
\[
\begin{split}
|c_{k,s}(f_{k + m})|
\ &\ll \
|\Delta_{h^{(k)}}^{\ell} (f_{k + m})(sh^{(k)})|
\ \le \
2^\ell \sup_{|y| \le 2^{-k}}|f_{k + m}(x + y)|
\\[1ex]
\ &\ll \
\sup_{|y| \le 2^{-k}}\frac{|f_{k + m}(x + y)|}{\big(1 + 2^k|y|\big)^\nu}
\ \le \
P_{2^k,\nu}(x).
\end{split}
\]
Notice that for a continuous function $g$ and $a \ge 0$,
\begin{equation} \label{P<}
P_{2^k,\nu}(g,x)
\ \le \
2^{\nu a} P_{2^{k+a},\nu}(g,x).
\end{equation} 
All these together give 
\begin{equation} \label{c_{k,s}(m>0)}
|c_{k,s}(f_{k + m})|
\ \ll \
P_{2^k,\nu}(f_{k + m},x)
\ \le \
2^{\nu m} P_{2^{k+m},\nu}(f_{k + m},x), \quad \forall m \ge 0.
\end{equation}  
On the other hand, if $m \ge -k$ we have  by Lemma \ref{Delta<P} another estimate the trigonometric polynomial 
$f_{k + m}$ of degree $2^{k + m}$,
\begin{equation} \label{|c_{k,s}(f_{k + m})|<}
\begin{split}
|c_{k,s}(f_{k + m})|
\ &\ll \
|\Delta_{h^{(k)}}^{\ell} (f_{k + m})(sh^{(k)})|
\\[1ex]
\ &\ll \
\min(1,|2^{k + m}h^{(k)}|^\ell)\max(1,|2^{k + m}h^{(k)}|^\nu)\,P_{2^{k+m},\nu}(f_{k + m},sh^{(k)}).
\end{split}
\end{equation} 
Hence, there holds the inequality
\begin{equation} \label{c_{k,s}<(m<0)}
|c_{k,s}(f_{k + m})|
\ \ll \
2^{\ell m}\,P_{2^{k+m},\nu}(f_{k + m},x), \quad  \forall m <0.
\end{equation} 
Indeed, if  $m < 0$ and $m+k \ge 0$,
\[
P_{2^{k+m},\nu}(f_{k + m},sh^{(k)})
\ = \ 
\sup_{y \in \TT} \frac{|f_{k + m}(sh^{(k)} + y)|}{(1 + 2^{k+m}|y|)^\nu}
\ = \ 
\sup_{y \in \TT} \frac{|f_{k + m}(x + y)|}{(1 + 2^{k+m}|sh^{(k)} - x + y|)^\nu}.
\]
Since $|x - sh^{(k)}| \le 2^{-k}$, we have  for all $m < 0$,
\[
1 + 2^{k+m}|sh^{(k)} - x + y| 
\ \ge \
1 + 2^{k+m}(|y| - |sh^{(k)} - x|)
\ \ge \frac{1}{2}(1 + 2^{k+m+1}(|y|),
\]
and consequently, by \eqref{P<},
\[
P_{2^{k+m},\nu}(f_{k + m},sh^{(k)})
\ \le \
2 \sup_{y \in \TT} \frac{|f_{k + m}(x + y)|}{(1 + 2^{k+m+1}|y|)^\nu}
\ \le \
4 P_{2^{k+m},\nu}(f_{k + m},x)
\]
which together with the equation $h^{(k)} = \ell^{-1}2^{-k}$ and 
\eqref{|c_{k,s}(f_{k + m})|<} proves \eqref{c_{k,s}<(m<0)} $m < 0$ and $m+k \ge 0$. 
In the  case $m < 0$ and $m+k < 0$,  \eqref{c_{k,s}<(m<0)} is trivial because by definition $f_{k + m}= 0$. By combining 
\eqref{q_k^*<(2)}, \eqref{c_{k,s}(m>0)} and \eqref{c_{k,s}(m>0)} we prove \eqref{q_k^*<(1)}.
The $d$-variate inequality \eqref{q_bk^*<(1)} can be easily derived from the univariate inequality 
\eqref{q_k^*<(1)} by a tensor product argument.

We are now in position to estimate $A_\bm(f)$. 
Indeed, putting for $\bm \in \ZZd$,
\begin{equation} \label{b_bm}
b_\bm:= \ \prod_{i=1}^d b_{m_i}, \quad 
b_{m_i}
:= \
\begin{cases}
2^{(\ell - r)m_i}, \ & \text{if} \ m_i < 0; \\
2^{(\nu - r)m_i}, \ & \text{if} \ m_i \ge 0,
\end{cases}
\end{equation}
from  \eqref{DirectIneq-q^*(1)} and \eqref{q_k^*<(1)} and 
Lemmas  \ref{PeetreM} and \ref{lemma[LPtheorem]} it follows that
\begin{equation} \label{}
\begin{split}
 A_\bm(f)
 \ &\ll \
\biggl\|\biggl(\sum_{\bk \in \ZZdp}\Big|2^{r|\bk + \bm|_1} b_\bm P_{2^{\bk + \bm}, \nu}(f_{\bk + \bm}) 
 \Big|^2 \biggl)^{1/2}\biggl\|_p
 \\[1ex]
\ &\ll \
b_\bm \biggl\|\biggl(\sum_{\bk \in \ZZdp}\Big|2^{r|\bk + \bm|_1} P_{2^{\bk + \bm}, \nu}(f_{\bk + \bm}) 
 \Big|^2 \biggl)^{1/2}\biggl\|_p
  \\[1ex]
\ &\ll \
b_\bm \biggl\|\biggl(\sum_{\bk \in \ZZdp}\Big|2^{r|\bk + \bm|_1} (f_{\bk + \bm}) 
 \Big|^2 \biggl)^{1/2}\biggl\|_p
  \\[1ex]
\ &\le \
b_\bm \biggl\|\biggl(\sum_{\bk \in \ZZdp}\Big|2^{r|\bk|_1} f_{\bk} 
 \Big|^2 \biggl)^{1/2}\biggl\|_p
 \ \asymp \ b_\bm \|f\|_{\Wrp}.
\end{split}
\end{equation}
Hence, by  \eqref{DirectIneq-q^*(1)}, \eqref{b_bm} and the inequalities $\ell - r > 0$ and $\nu - r < 0$, we have
\begin{equation} \label{}
\begin{split}
A(f)
\ \le \
\sum_{\bm \in \ZZd} A_\bm(f)
 \ \ll \ 
\|f\|_{\Wrp} \sum_{\bm \in \ZZd} b_\bm 
 \ \ll \ 
\|f\|_{\Wrp}.
\end{split}
\end{equation}
The proof is complete.
\end{proof}

Theorem \ref{DirectThm} has been proven in \cite{ByUl16} for the case $\ell =2$ in terms of Faber series (see Subsection \ref{Examples}).

\subsection{Inverse theorem of sampling representation}

\begin{theorem} \label{InverseThm}
Let $1 < p < \infty$ and $0 < r < \ell - 1$.
Then for every function $f$ on $\TTd$  represented as a B-spline series 
\begin{equation} \label{InverseThm:Representation}
f \ = \sum_{\bk \in {\ZZ}^d_+} \ q_\bk = 
\sum_{\bk \in {\ZZ}^d_+} \sum_{\bs \in I(\bk)} c_{\bk,\bs}N_{\bk,\bs}, 
\end{equation}  
we have $f \in \Wrp$ and
\begin{equation} \label{InverseIneq}
\|f\|_{\Wrp}
\ \ll \
\biggl\| \biggl(\sum_{\bk \in \ZZdp} 
\Big|2^{r|\bk|_1} q_\bk\Big|^2 \biggl)^{1/2}\biggl\|_p,
\end{equation}
whenever the right hand side is finite.
\end{theorem}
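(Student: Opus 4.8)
The plan is to derive the inverse inequality from the difference-based norm equivalence of Lemma~\ref{normequivalence[Delta]}, which is at our disposal since $r<\ell-1<\ell$, and which (like all such equivalences) also yields $f\in\Wrp$ as soon as the difference expression on its right-hand side is finite. Thus for each $e\subset[d]$ I would estimate the weighted averaged mixed differences
\[
2^{(r+1)|\bk|_1}\int_{P_\bk}\bigl|\Delta_\bh^{\ell,e}(f)(\bx)\bigr|\,\rd\bh,\qquad \bk\in\ZZdpe,
\]
term by term along the representation $f=\sum_{\bk'\in\ZZdp}q_{\bk'}$. Using linearity of $\Delta_\bh^{\ell,e}$ and the fact that $\Delta_\bh^{\ell,e}N_{\bk',\bs}$ is again a tensor product (a differenced B-spline in the coordinates $i\in e$, an undifferenced one in the coordinates $i\notin e$), everything comes down to a single local estimate: for fixed $\bk'$ and every $\bk,\bx$,
\[
\int_{P_\bk}\bigl|\Delta_\bh^{\ell,e}(q_{\bk'})(\bx)\bigr|\,\rd\bh
\ \ll\
\Bigl(\prod_{i\notin e}2^{-k_i}\Bigr)\prod_{i\in e}\Bigl(\int_{-2^{-k_i}}^{2^{-k_i}}\min\bigl(1,(2^{k'_i}|h_i|)^{\ell-1}\bigr)\,\rd h_i\Bigr)\,\bM(q_{\bk'})(\bx),
\]
after which the Fefferman--Stein inequality (Lemma~\ref{Hardy-LittlewoodM}) does the rest.

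The local estimate rests on three elementary facts about the tensor-product B-splines $N_{\bk',\bs}$. First, since $M_\ell\in C^{\ell-2}$ has piecewise-constant $(\ell-1)$st derivative, $\|\Delta_\eta^\ell M_\ell\|_{C(\RR)}\ll\min(1,|\eta|^{\ell-1})$, whence $\|\Delta_{h_i}^\ell N_{k'_i,s_i}\|_\infty\ll\min(1,(2^{k'_i}|h_i|)^{\ell-1})$. Second, the supports $\sigma(\bk',\bs)$, $\bs\in I(\bk')$, are boxes of measure $\asymp 2^{-|\bk'|_1}$ with bounded overlap, so only $O(1)$ of them meet a fixed dilate of a given point $\bx$. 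Third, the $L_\infty$-stability of the B-spline basis together with the equivalence of norms on the space of polynomials of coordinatewise degree $\le\ell-1$ gives $|c_{\bk',\bs}|\ll\|q_{\bk'}\|_{L_\infty(\sigma(\bk',\bs))}\ll\bM(q_{\bk'})(\bx)$ whenever $\bx$ lies near $\sigma(\bk',\bs)$. Combining these with $\Delta_\bh^{\ell,e}q_{\bk'}=\sum_\bs c_{\bk',\bs}\Delta_\bh^{\ell,e}N_{\bk',\bs}$ and integrating over $\bh\in P_\bk$ yields the displayed bound; I would prove the univariate version first and pass to $d$ variables in the usual way, which is exactly why the \emph{mixed} maximal function $\bM$ enters.

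Evaluating the elementary integrals, each factor over $i\in e$ is $\ll 2^{(\ell-1)k'_i-\ell k_i}$ when $k_i\ge k'_i$ and $\ll 2^{-k_i}$ when $k_i<k'_i$; multiplying by $2^{(r+1)|\bk|_1}$ and using that $k_i=0$ for $i\notin e$, and writing $g_{\bk'}:=2^{r|\bk'|_1}q_{\bk'}$, coordinate $i$ contributes a factor $\ll\beta_{k_i-k'_i}$ for $i\in e$ and $\ll 2^{-rk'_i}$ for $i\notin e$, where $\beta_j:=2^{(r+1-\ell)j}$ for $j\ge0$ and $\beta_j:=2^{rj}$ for $j<0$. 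The decisive point is $\sum_{j\in\ZZ}\beta_j<\infty$: the negative tail converges because $r>0$, and the positive tail converges exactly because $r<\ell-1$. A Cauchy--Schwarz step in $\bk'$ against the weights $\bigl(\prod_{i\in e}|\beta_{k_i-k'_i}|\bigr)\bigl(\prod_{i\notin e}2^{-rk'_i}\bigr)$, which are summable in $\bk'$ uniformly in $\bk$, followed by Fubini gives, pointwise in $\bx$,
\[
\sum_{\bk\in\ZZdpe}\Bigl(2^{(r+1)|\bk|_1}\int_{P_\bk}|\Delta_\bh^{\ell,e}(f)(\bx)|\,\rd\bh\Bigr)^2
\ \ll\
\sum_{\bk'\in\ZZdp}\bigl|\bM(g_{\bk'})(\bx)\bigr|^2 .
\]
Taking $L_p$-norms, applying Lemma~\ref{Hardy-LittlewoodM}, and summing over the finitely many $e\subset[d]$, Lemma~\ref{normequivalence[Delta]} yields $\|f\|_{\Wrp}\ll\bigl\|(\sum_\bk|2^{r|\bk|_1}q_\bk|^2)^{1/2}\bigr\|_p$; in particular finiteness of the right-hand side forces $f\in\Wrp$, and convergence of the series in $\Wrp$ follows by applying the same bound to the tails of the series. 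The main obstacle is the local estimate on $\Delta_\bh^{\ell,e}(q_{\bk'})$ — specifically, extracting the sharp exponent $\ell-1$ from the limited smoothness of $M_\ell$ and turning the B-spline coefficients into a pointwise maximal-function bound; the rest is bookkeeping with geometric sums.
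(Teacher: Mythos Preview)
Your proposal is correct and follows essentially the same route as the paper's proof: both reduce to Lemma~\ref{normequivalence[Delta]}, both extract the decisive exponent $\ell-1$ from the piecewise-polynomial structure of $M_\ell$ in the regime $k_i\ge k'_i$ and use averaging in the regime $k_i<k'_i$, both pass to the mixed Hardy--Littlewood maximal function and invoke Lemma~\ref{Hardy-LittlewoodM}, and both close by summing geometric weights using $0<r<\ell-1$. The paper differs only organizationally --- it works with $\Delta^{\ell-1,e}$ rather than $\Delta^{\ell,e}$, first passes to disjoint-support pieces $q_\bk^*$, indexes the double sum by the shift $\bm=\bk'-\bk$ instead of applying Cauchy--Schwarz directly, and obtains the maximal-function control via a localized auxiliary function $G^\bx_{\bk'}$ (with $\supp G^\bx_{\bk'}$ of the right size) rather than through B-spline coefficient stability; the latter device is in fact what makes the step $|c_{\bk',\bs}|\ll\bM(q_{\bk'})(\bx)$ rigorous in the case $k_i<k'_i$, where many $\bs$ contribute and not all of them lie ``near $\bx$'' at scale $2^{-k'_i}$.
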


\begin{proof} 
For $\bk \in \ZZdp$, let $I^*(\bk)$ be the subset in $I(\bk)$ defined in the proof of Theorem \ref{DirectThm}.
By the same argument as in the proof of Theorem \ref{DirectThm} it is sufficient to prove that for a 
function $f$ on $\TTd$  represented as a B-spline series 
\begin{equation} \label{InverseThm:Representation^*}
f \ = \sum_{\bk \in {\ZZ}^d_+} \ q_\bk^* = 
\sum_{\bk \in {\ZZ}^d_+} \sum_{\bs \in I^*(\bk)} c_{\bk,\bs}N_{\bk,\bs}, 
\end{equation}  
we have
\begin{equation} \label{InverseIneq^*(1)}
\|f\|_{\Wrp}
\ \ll \
\biggl\| \biggl(\sum_{\bk \in \ZZdp} 
\big|2^{r|\bk|_1} q_\bk^*\big|^2 \biggl)^{1/2}\biggl\|_p,
\end{equation}
whenever the right hand side is finite. Due to Lemma \ref{normequivalence[Delta]} 
the last inequality is equivalent to
\begin{equation} \label{InverseIneq-q^*(2)}
\biggl\| \biggl(\sum_{\bk \in \ZZdpe} 
 \biggl(2^{(r+1)|\bk|_1} \int_{P_{\bk}}\big|\Delta_\bh^{\ell - 1,e}(f)\big|\, \rd \bh \biggl)^2 \biggl)^{1/2}\biggl\|_p
\ \ll \
\biggl\| \biggl(\sum_{\bk \in \ZZdp} 
\big|2^{r|\bk|_1} q_\bk^*\big|^2 \biggl)^{1/2}\biggl\|_p,
\quad
\forall e \subset [d].
\end{equation}
Let us verify this inequality for the case $e = [d]$  what is 
\begin{equation} \label{InverseIneq-q^*()}
B(f)
:=  \
\biggl\| \biggl(\sum_{\bk \in \ZZdp} 
 \biggl(2^{(r+1)|\bk|_1} \int_{P_{\bk}}\big|\Delta_\bh^{\ell - 1}(f)\big|\, \rd \bh \biggl)^2 \biggl)^{1/2}\biggl\|_p
\ \ll \
\biggl\| \biggl(\sum_{\bk \in \ZZdp} 
\big|2^{r|\bk|_1} q_\bk^*\big|^2 \biggl)^{1/2}\biggl\|_p.
\end{equation}
The case  $e \not= [d]$ can be proven similarly with a slight modification.
For $\bk \in \ZZd \setminus \ZZdp$, we introduce the convention: $I^*(\bk):= \varnothing$, $q_\bk^*:=0$,
$c_{\bk,\bs}:=0$, $N_{\bk,\bs}:=0$ for $\bs \in I^*(\bk)$. With this convention we can write 
\begin{equation} \nonumber
q_\bk^* 
\ = \ 
\sum_{\bs \in I^*(\bk)} c_{\bk,\bs}N_{\bk,\bs}, \quad \forall \bk \in \ZZd.
\end{equation}  
Put $\ZZdu:= \{\bm \in \ZZd:\, m_i > 0, \ i \in u, \ m_i \le 0, \ i \not\in u \}$ for $u \subset [d]$.
Notice that we have for every $\bk \in \ZZdp$,
\[
f(\bx)
\ = \
\sum_{u \subset [d]} \sum_{\bm \in \ZZdu} q_{\bk + \bm}^*(\bx), \quad \forall \bx \in \TTd,
\] 
which yields the inequality
\begin{equation} \label{InverseIneq-q^*(1)}
B(f)
\ \le \
\sum_{u \subset [d]}\sum_{\bm \in \ZZdu} B_\bm(f),
\end{equation}
where 
\begin{equation*}
B_\bm(f)
:= \
\biggl\| \biggl(\sum_{\bk \in \ZZdp} 
 \biggl(2^{(r+1)|\bk|_1} \int_{P_{\bk}}\big|\Delta_\bh^{\ell - 1}(q_{\bk+\bm}^*)\big|\, \rd \bh \biggl)^2 \biggl)^{1/2}\biggl\|_p.
\end{equation*} 
For a given $x \in \TT$, we preliminarily estimate the univariate integral  
\[
2^k\int_{P_{k}}\big|\Delta_h^{\ell - 1}(q_{k+m}^*,x)\big|\,\rd h.
\]
Let $I^*(k+m;x)$ be the subset in $I^*(k+m)$ of all $s$ such that $|x - s2^{-k-m}| \le \ell 2^{-k}$ if $k+m \ge 0$, and  $I^*(k+m;x) = \varnothing$ if $k+m < 0$. Then from the equation
\begin{equation} \nonumber
\Delta_h^{\ell - 1}(q_{k+m}^*,x) 
\ = \
\sum_{s \in I^*(k+m;x)} c_{k+m,s}\,\Delta_h^{\ell - 1}(N_{k+m,s},x), \quad |h| \le 2^{-k},
\end{equation}
we have
\begin{equation} \nonumber
2^k\int_{P_{k}}\big|\Delta_h^{\ell - 1}(q_{k+m}^*,x)\big|\,\rd h 
\ \le \
\sum_{s \in I^*(k+m;x)} |c_{k+m,s}|\,2^k \int_{|h| \le 2^{-k}}\big|\Delta_h^{\ell - 1}(N_{k+m,s},x)\big|\,\rd h 
\end{equation}
If $m > 0$, from the definition \eqref{def[Delta_h^ell]} we derive that
\begin{equation} \nonumber
2^k \int_{|h| \le 2^{-k}}\big|\Delta_h^{\ell - 1}(N_{k+m,s},x)\big|\,\rd h 
\ \le \
\sum_{j=0}^{\ell - 1} \binom{\ell - 1}{j} 2^k \int_{|h| \le 2^{-k}}\big|N_{k+m,s}(x + jh)\big|\,\rd h 
\ \ll \
N_{k+m,s}(x) + 2^{-m}.
\end{equation} 
Notice that for $k+m \ge 0$, the B-splines $N_{k+m,s}$ have the $\ell - 1$ derivative uniformly bounded by 
$C2^{(\ell - 1)(k+m)}$ with an absolute constant $C$. Hence, we get for $|h| \le 2^{-k}$,
\begin{equation} \nonumber
\big|\Delta_h^{\ell - 1}(N_{k+m,s},x)\big|
\ \ll \ 
|h|^{\ell - 1} 2^{(\ell - 1)(k+m)}
\ \ll \
2^{(\ell - 1)m},
\end{equation}
and consequently,
\begin{equation} \nonumber
2^k \int_{|h| \le 2^{-k}}\big|\Delta_h^{\ell - 1}(N_{k+m,s},x)\big|\,\rd h 
\ \ll \
2^{(\ell - 1)m}.
\end{equation}  
Taking account that $q_{k+m}^* = 0$ for $k+m < 0$, and summing up we arrive at the estimate
\begin{equation} \label{ineq[intDq]}
2^k\int_{P_{k}}\big|\Delta_h^{\ell - 1}(q_{k+m}^*,x)\big|\,\rd h 
\ \ll \
\begin{cases}
\sum_{s \in I^*(k+m;x)} |c_{k+m,s}| \big(N_{k+m,s}(x) + 2^{-m}\big), \ &  m \ge 0; \\[1ex] 
\sum_{s \in I^*(k+m;x)} |c_{k+m,s}| 2^{(\ell - 1)m}, \ &  m < 0.
\end{cases} 
\end{equation}
We introduce some notations: for $u \subset [d]$ and $\bx \in \RRd$
 $\bar{u}:= [d] \setminus u$, $\bx(u)$ is a the element in $\RRd$ such that  $x(u)_i = x_i$ if $i \in u$ and
$x(u)_i = 0$ otherwise.  Then for a given $\bx \in \TTd$, from  \eqref{ineq[intDq]} by a tensor product argument we obtain for every $\bk \in \ZZdp$ and every $\bm \in \ZZdpu$,
\begin{equation} \label{ineq[g_{bk+bm}(bx)]}
\begin{split}
g_{\bk+\bm}(\bx)
&:= \
2^{|\bk|_1}\int_{P_{\bk}}\big|\Delta_{\bh}^{\ell - 1}(q_{\bk+\bm}^*,\bx)\big|\,\rd \bh
\\[1ex] 
\ &\ll \
\sum_{\bs \in I^*(\bk;\bx)} |c_{\bk,\bs}| 2^{(\ell - 1)|\bm(\bar{u})|_1} \prod_{i \in u}N_{k_i,s_i}(x_i) 
+ \sum_{\bs \in I^*(\bk;\bx)} |c_{\bk,\bs}| 2^{(\ell - 1)|\bm(\bar{u})|_1}  2^{-|\bm(u)|_1}, 
\end{split}
\end{equation}
where $I^*(\bk;\bx):= \prod_{i=1}^d I^*(k_i;x_i)$.
By using the inequality for the univariate periodic B-splines 
\begin{equation} \nonumber
2^k \int_{\TT} N_{k,s}(y)\,\rd y  
\ \ge \
C
\end{equation}
with an absolute constant $C > 0$,
we can continue the estimation \eqref{ineq[g_{bk+bm}(bx)]} for every $\bk \in \ZZdp$ and every $\bm \in \ZZdpu$ as
\begin{equation} \label{ineq[g_{bk+bm}(bx)](2)}
\begin{split}
g_{\bk+\bm}(\bx)
\ &\ll \
\sum_{\bs \in I^*(\bk;\bx)} |c_{\bk,\bs}| 2^{|\bk(\bar{u})|_1 + \ell|\bm(\bar{u})|_1} 
\biggl(\prod_{i \in u}N_{k_i,s_i}(x_i) \biggl)
\biggl(\prod_{i \in \bar{u}}\int_{\TT} N_{k_i,s_i}(x_i + h_i)\,\rd h_i\biggl)  
\\[1ex] 
&+ \sum_{\bs \in I^*(\bk;\bx)} |c_{\bk,\bs}| 
2^{|\bk(u)|_1} 
\biggl(\prod_{i \in u}\int_{\TT} N_{k_i,s_i}(x_i + h_i)\,\rd h_i\biggl) 
2^{|\bk(\bar{u})|_1 + \ell|\bm(\bar{u})|_1} 
\biggl(\prod_{i \in \bar{u}}\int_{\TT} N_{k_i,s_i}(x_i + h_i)\,\rd h_i\biggl)  
\\[1ex] 
\ &= \
 2^{|\bk(\bar{u})|_1 + \ell|\bm(\bar{u})|_1} 
\int_{\TTd} \biggl|\sum_{\bs \in I^*(\bk;\bx)} c_{\bk,\bs}N_{\bk+\bm,\bs}(\bx + \bh(\bar{u}))\biggl|\,\rd \bh
\\[1ex] 
&+ 2^{|\bk(u)|_1}2^{|\bk(\bar{u})|_1 + \ell|\bm(\bar{u})|_1} 
\int_{\TTd} \biggl|\sum_{\bs \in I^*(\bk;\bx)} c_{\bk,\bs}N_{\bk+\bm,\bs}(\bx + \bh)\biggl|\,\rd \bh.
\end{split}
\end{equation}
For a fixed $x \in \TT$ consider the univariate function on $\TT$ in variable $y$
\begin{equation} \nonumber
G^x_{k+m}(y)
:= \
\sum_{s \in I^*(k+m;x)} c_{k+m,s}N_{k+m,s}(y), \quad k \in \ZZ_+, \ m \in \ZZ.
\end{equation}
By the construction of the set $I^*(k+m;x)$ and the inequality  $|\supp(N_{k+m,s})| \le 2^{-k-m}$, we have
\begin{equation} \label{supportG}
|\supp(G^x_{k+m})|
\ \ll \
\begin{cases}
2^{-k}, \ &  m \ge 0; \\[1ex] 
2^{-k-m}, \ &  m < 0.
\end{cases} 
\end{equation}
Hence, by the definition of the Hardy-Littlewood maximal function we receive for every $y \in \TT$,
\begin{equation} \label{intG}
\int_{\TT} |G^x_{k+m}(y + h)| \,\rd h
\ \ll \
\begin{cases}
2^{-k} M(G^x_{k+m}(y)), \ &  m \ge 0; \\[1ex] 
2^{-k-m} M(G^x_{k+m}(y)), \ &  m < 0.
\end{cases}  
\end{equation}
Let $u \subset [d]$ and $\bx \in \TTd$ be given. We consider the  function on $\TTd$ in variable $\by$
\begin{equation} \nonumber
G^\bx_{\bk+\bm}(\by)
:= \
\sum_{\bs \in I^*(\bk+\bm;\bx)} c_{\bk+\bm,s}N_{\bk+\bm,\bs}(\by), \quad \bk \in \ZZdp, \ \bm \in \ZZdpu.
\end{equation}
From \eqref{intG} by a tensor product argument we can show that for every $\by \in \TTd$, every $\bk \in \ZZdp$ and every $\bm \in \ZZdpu$,
\begin{equation} \label{intGb}
\int_{\TTd} |G^\bx_{\bk+\bm}(\by + \bh)| \,\rd \bh
\ \ll \
2^{-|\bk(u)|_1 - |\bk(\bar{u})- |\bm(\bar{u})|_1} \bM(G^\bx_{\bk+\bm}(\by)). 
\end{equation}
and 
\begin{equation} \label{intGbu}
\int_{\TTd} |G^\bx_{\bk+\bm}(\by + \bh(\bar{u}))| \,\rd \bh
\ \ll \
2^{- |\bk(\bar{u})- |\bm(\bar{u})|_1} \bM(G^\bx_{\bk+\bm}(\by)). 
\end{equation}
Applying these inequalities for $\by = \bx$ to the right hand side in \eqref{ineq[g_{bk+bm}(bx)](2)}, 
by the equation $G^\bx_{\bk+\bm}(\bx) = q_{\bk+\bm}^*(\bx)$ we arrive at
\begin{equation} \label{ineq[g_{bk+bm}(bx)](3)}
g_{\bk+\bm}(\bx)
\ \ll \
2^{-(\ell - 1)|\bm(\bar{u})|_1} \bM(G^\bx_{\bk+\bm}(\bx))
\ = \ 
2^{-(\ell - 1)|\bm(\bar{u})|_1} \bM(q_{\bk+\bm}^*(\bx))
\end{equation}
which by Lemma \ref{Hardy-LittlewoodM} yields for every $\bm \in \ZZdpu$,
\begin{equation*}
\begin{split}
B_\bm(f)
\ &= \
\biggl\| \biggl(\sum_{\bk \in \ZZdp} 
 \Big(2^{r|\bk|_1} g_{\bk+\bm}\Big)^2 \biggl)^{1/2}\biggl\|_p
\\[1ex] 
\ &\ll \
2^{(\ell - 1 - r)|\bm(\bar{u})|_1 - |\bm(u)|_1} 
\biggl\| \biggl(\sum_{\bk \in \ZZdp} 
\Big(\bM\big(2^{r|\bk + \bm|_1} q_{\bk+\bm}^*\big)\Big)^2 \biggl)^{1/2}\biggl\|_p
\\[1ex] 
\ &\ll \
2^{(\ell - 1 - r)|\bm(\bar{u})|_1 - |\bm(u)|_1} 
\biggl\| \biggl(\sum_{\bk \in \ZZdp} 
\Big(\big|2^{r|\bk + \bm|_1} q_{\bk+\bm}^*\big|^2 \biggl)^{1/2}\biggl\|_p
\\[1ex] 
\ &\le \
2^{(\ell - 1 - r)|\bm(\bar{u})|_1 - |\bm(u)|_1} 
\biggl\| \biggl(\sum_{\bk \in \ZZdp} 
\big|2^{r|\bk|_1} q_{\bk}^*\big|^2 \biggl)^{1/2}\biggl\|_p.
\end{split}  
\end{equation*}
From the last inequality 
taking account of the
inequality $\ell - 1 - r > 0$, we obtain
\begin{equation} \nonumber
B(f)
\ \ll \
\sum_{u \subset [d]}\sum_{\bm \in \ZZdu} 2^{(\ell - 1 - r)|\bm(\bar{u})|_1 - |\bm(u)|_1} 
\biggl\| \biggl(\sum_{\bk \in \ZZdp} 
\big|2^{r|\bk|_1} q_{\bk}^*\big|^2 \biggl)^{1/2}\biggl\|_p
\ \ll \
\biggl\| \biggl(\sum_{\bk \in \ZZdp} \big|2^{r|\bk|_1} q_{\bk}^*\big|^2 \biggl)^{1/2}\biggl\|_p
\end{equation} 
which proves \eqref{InverseIneq-q^*()} and therefore, the theorem.
\end{proof}

Theorem \ref{InverseThm} has been proven in \cite{ByUl16} for the case $\ell =2$ in terms of Faber series (see Subsection \ref{Examples}).

\begin{corollary}
Let $1 < p < \infty$ and $\max(\frac{1}{p},\frac{1}{2}) < r < \ell - 1$.
Then we have
\begin{equation} \nonumber
\biggl\| \biggl(\sum_{\bk \in \ZZdp} 
\Big|2^{r|\bk|_1} q_\bk(f)\Big|^2 \biggl)^{1/2}\biggl\|_p
\ \asymp \
\|f\|_{\Wrp},
\quad  \forall f \in \Wrp. 
\end{equation}
\end{corollary}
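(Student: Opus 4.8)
The plan is to obtain the stated norm equivalence by simply combining Theorem \ref{DirectThm} and Theorem \ref{InverseThm}, after checking that the hypotheses of both are satisfied on the range $\max(\frac1p,\frac12) < r < \ell - 1$ assumed here. No new estimates are needed; the only point requiring a little care is the bookkeeping of which theorem supplies which ingredient.

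First I would note that $r < \ell - 1$ forces $r < \ell$, so the hypotheses of Theorem \ref{DirectThm} hold. Applying it to a given $f \in \Wrp$ yields two things at once: that $f$ is represented by the B-spline series \eqref{eq:B-splineRepresentation} with component functions $q_\bk(f)$ and coefficient functionals $c_{\bk,\bs}(f)$, the series converging in the norm of $\Wrp$; and the upper bound
\[
\biggl\| \biggl(\sum_{\bk \in \ZZdp} \Big|2^{r|\bk|_1} q_\bk(f)\Big|^2 \biggr)^{1/2}\biggr\|_p \ \ll \ \|f\|_{\Wrp}.
\]
In particular the left-hand side is finite, which is exactly the proviso needed to invoke the inverse theorem.

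Next I would apply Theorem \ref{InverseThm}, whose hypotheses $1<p<\infty$ and $0<r<\ell-1$ hold under the present assumptions, to the very representation \eqref{eq:B-splineRepresentation} of $f$ produced above (that is, with $q_\bk := q_\bk(f)$ and $c_{\bk,\bs} := c_{\bk,\bs}(f)$). Since the right-hand side of \eqref{InverseIneq} has just been shown to be finite, the theorem gives $f \in \Wrp$ (automatic here) together with
\[
\|f\|_{\Wrp} \ \ll \ \biggl\| \biggl(\sum_{\bk \in \ZZdp} \Big|2^{r|\bk|_1} q_\bk(f)\Big|^2 \biggr)^{1/2}\biggr\|_p .
\]
Combining the two displayed inequalities yields the asserted equivalence $\asymp$ for every $f \in \Wrp$, which completes the proof. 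There is essentially no genuine obstacle: the one subtlety is that Theorem \ref{InverseThm} is stated conditionally, so one must feed it the output of Theorem \ref{DirectThm}, which supplies both the required representation of $f$ and the finiteness of the right-hand side. (Alternatively one could note that $f \in \Wrp$ with $r > \frac1p$ lies in $C(\TTd)$, so Lemma \ref{lemma[representation]} already provides the representation, but routing through Theorem \ref{DirectThm} is cleaner since it also delivers the finiteness for free.)
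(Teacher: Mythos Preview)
Your proposal is correct and matches the paper's intent exactly: the corollary is stated without proof in the paper precisely because it follows immediately by combining Theorem~\ref{DirectThm} (for the $\ll$ direction) with Theorem~\ref{InverseThm} (for the $\gg$ direction), after observing that $r<\ell-1$ implies $r<\ell$. Your care in checking that the finiteness proviso of Theorem~\ref{InverseThm} is supplied by Theorem~\ref{DirectThm} is appropriate and completes the argument.
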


\subsection{Theorems of sampling representation in Besov spaces}

Theorems on B-spline quasi-interpolation sampling representations with discrete equivalent quasi-norm in terms of coefficient functionals have been proved in \cite{Di09}--\cite{Di13}, \cite{DU14} for various non-periodic  Besov spaces. Let us now state direct and inverse theorems on a quasi-interpolation representation  in periodic spaces $B_{p, \theta}^r$ by the B-splines series \eqref{eq:B-splineRepresentation}, which can be proven in the same way as for
non-periodic  Besov spaces.

\begin{theorem}
Let $0< p,\theta \leq \infty$ and $1/p<r<2\ell$. Then every function $f \in B_{p, \theta}^r$ can be represented as the series \eqref{eq:B-splineRepresentation} converging in the norm of $B_{p, \theta}^r$, and there holds
the inequality
\begin{equation}\nonumber
\biggl( \sum_{\bk \in \ZZdp} \
2^{r|\bk|_1\theta}\|q_\bk(f)\|^{\theta}_p\biggl)^{1/\theta} 
\ \ll \
\|f\|_{B_{p, \theta}^r}
\end{equation}
for all $f\in B^r_{p,\theta}$, with the sum over $\bk$ changing to the supremum when $\theta = \infty$.
\end{theorem}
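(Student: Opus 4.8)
\emph{Proof plan.} The plan is to transfer to the periodic torus the scheme used for non-periodic Besov spaces in \cite{Di09}--\cite{Di13}, \cite{DU14}, the only genuinely new ingredient being the explicit coefficient formula of Theorem \ref{theorem[c_{k,s}]}. Since $r>1/p$, one has the continuous embedding $B_{p,\theta}^r \hookrightarrow C(\TTd)$, so every $f\in B_{p,\theta}^r$ is continuous and Lemma \ref{lemma[representation]} already gives the expansion $f=\sum_{\bk\in\ZZdp} q_\bk(f)$ with convergence in $C(\TTd)$. Granting the quantitative block estimate below, the asserted inequality follows by summation; convergence of the series in the $B_{p,\theta}^r$-norm (for $\theta<\infty$) then follows by applying that inequality to the tails together with the companion Bernstein-type estimate $\bigl\|\sum_{\bk\in F} q_\bk(f)\bigr\|_{B_{p,\theta}^r} \ll \bigl(\sum_{\bk\in F} 2^{r|\bk|_1\theta}\|q_\bk(f)\|_p^\theta\bigr)^{1/\theta}$ (the inverse theorem, proved in the same way as in the cited references, and valid because each $q_\bk(f)$ is a spline at dyadic level $\bk$); the limit coincides with $f$ because it already does in $C(\TTd)$.

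The core is the estimate
\[
\|q_\bk(f)\|_p \ \ll \ \omega_\ell^{u(\bk)}\!\bigl(f,2^{-\bk}\bigr)_p, \qquad \bk\in\ZZdp,
\]
where $u(\bk):=\{j\in[d]:k_j>0\}$, $2^{-\bk}$ is the vector with coordinates $2^{-k_j}$, and $\omega_\ell^\varnothing(\cdot)_p:=\|\cdot\|_p$ (if necessary with the right-hand side replaced by a finite sum of $\omega_\ell^{u(\bk)}$ at comparable scales, which is immaterial after summation). To prove it, fix $\bk$ and put $u:=u(\bk)$. By Theorem \ref{theorem[c_{k,s}]}, for $\bs\in I(\bk)$ the symbol $P_{\bk,\bs}$ of the coefficient functional factors as $\bigl(\prod_{j\notin u}P_\Lambda(z_j)\prod_{j\in u}P^*_{k_j,s_j}(z_j)\bigr)\cdot\prod_{j\in u}(z_j-1)^\ell$, so $c_{\bk,\bs}(f)=\bigl(T_{\bh^{(\bk)}}^{[Q^*_{\bk,\bs}]}\circ\Delta_{\bh^{(\bk)}}^{\ell,u}\bigr)(f)(\bs\bh^{(\bk)})$ with a bounded, fixed-support shift operator $T^{[Q^*_{\bk,\bs}]}$; in particular $q_\bk$ annihilates every polynomial of coordinate degree $<\ell$ in the variables $j\in u$. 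Working patchwise on the B-splines $N_{\bk,\bs}$, using their finite overlap, the bound $\|N_{\bk,\bs}\|_\infty\le 1$, and the $L_p$-stability $\bigl\|\sum_{\bs} c_\bs N_{\bk,\bs}\bigr\|_p \asymp 2^{-|\bk|_1/p}\|(c_\bs)\|_{\ell_p}$, one reduces to an $\ell_p$-sum of $|c_{\bk,\bs}(f-P_\bs)|$, where $P_\bs$ is a near-best local polynomial of the above type on an $O(2^{-\bk})$-enlargement $\widehat\sigma$ of $\supp N_{\bk,\bs}$. Since $c_{\bk,\bs}$ is built from $\ell$-th mixed differences (in the directions $j\in u$) of the small function $f-P_\bs$, combining a local embedding inequality --- valid precisely because $r>1/p$ --- that bounds point values and differences of $f-P_\bs$ by its local $L_p$-norm, the Whitney-type inequality $\inf_P\|f-P\|_{L_p(\widehat\sigma)} \ll \omega_\ell^u(f,2^{-\bk};\widehat\sigma)_p$, and summation over $\bs$ yields the stated bound.

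Finally, summation. Splitting $\ZZdp$ by the value of $u(\bk)$ and using (quasi-)subadditivity of the $\ell_\theta$-norm,
\[
\Bigl(\sum_{\bk\in\ZZdp} 2^{r|\bk|_1\theta}\|q_\bk(f)\|_p^\theta\Bigr)^{1/\theta}
\ \ll \
\sum_{u\subset[d]} \Bigl(\sum_{\substack{\bk\in\ZZdp \\ u(\bk)=u}} 2^{r|\bk|_1\theta}\,\omega_\ell^{u}(f,2^{-\bk})_p^\theta\Bigr)^{1/\theta};
\]
for each fixed $u$, the modulus $\omega_\ell^u(f,\bt)_p$ depends only on the coordinates $t_j$, $j\in u$, is monotone and satisfies $\omega_\ell^u(f,2\bt)_p \ll \omega_\ell^u(f,\bt)_p$, so the standard dyadic comparison of a geometric-weighted sum with an integral identifies the inner quantity, up to constants, with the Besov semi-norm $|f|_{B_{p,\theta}^{r,u}}$; summing over $u\subset[d]$ gives $\|f\|_{B_{p,\theta}^r}$. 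For $\theta=\infty$ one replaces every $\ell_\theta$-sum by a supremum. The step I expect to be the principal obstacle is the block estimate $\|q_\bk(f)\|_p \ll \omega_\ell^{u(\bk)}(f,2^{-\bk})_p$ in the quasi-Banach range $0<p<1$ (and $0<\theta<1$): there the linear and triangle-inequality manipulations must be recast through $p$-convexity, the passage from the point-value structure of the coefficient functionals to an $L_p$-bound forces the use of local polynomial approximation together with the local embedding that consumes the hypothesis $r>1/p$, and the ``mixed'' blocks with some $k_j=0$ --- where $q_\bk$ acts in those directions by the non-annihilating operator $Q_0$ --- must be tracked separately. The remaining steps are routine once this estimate is available.
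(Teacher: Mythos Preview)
The paper does not actually give a proof of this theorem: it states the result and says only that it ``can be proven in the same way as for non-periodic Besov spaces'' in \cite{Di09}--\cite{Di13}, \cite{DU14}. Your proposal is therefore \emph{more} detailed than the paper's treatment, and in overall strategy it matches exactly what the paper indicates: transfer the non-periodic scheme, using that $r>1/p$ gives $B_{p,\theta}^r\hookrightarrow C(\TTd)$ so Lemma~\ref{lemma[representation]} applies, establish a block estimate on $\|q_\bk(f)\|_p$ in terms of a mixed modulus of smoothness, and sum. Your identification of the quasi-Banach range $0<p<1$ as the delicate part is also accurate and consistent with how the cited references handle it.

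There is, however, one genuine quantitative gap. Your core block estimate reads $\|q_\bk(f)\|_p \ll \omega_\ell^{u(\bk)}(f,2^{-\bk})_p$, i.e.\ with the $\ell$-th mixed modulus; this is what Theorem~\ref{theorem[c_{k,s}]} delivers directly, since $P_{\bk,\bs}$ contains exactly one factor $D_\ell$ in each active coordinate. But an $\ell$-th modulus only controls the Besov semi-norm for $r<\ell$ (with the usual $p$-dependent correction), not for the full stated range $r<2\ell$. In the cited non-periodic works the sharper estimate $\|q_\bk(f)\|_p \ll \omega_{2\ell}^{u(\bk)}(f,2^{-\bk})_p$ is obtained, essentially by combining the $\ell$-th-difference structure of the coefficient functionals with the additional $\ell$-th-order polynomial reproduction of $Q_{k-1}$ (equivalently, writing $q_k(f)=q_k\bigl(f-Q_{k-1}(f)\bigr)$ and exploiting that the inner error is already of size $\omega_\ell$, so the outer $\ell$-th difference doubles the order). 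Your Whitney-subtraction step as written only extracts one factor of $\ell$. To cover $1/p<r<2\ell$ you must replace $\omega_\ell$ by $\omega_{2\ell}$ and insert this doubling argument; once that is done, the rest of your plan goes through unchanged.
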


\begin{theorem} 
Let $0 < p,\theta \leq \infty$ and $0 < r < \min\{2\ell, 2\ell - 1 + 1/p\}$.
Then for every function $f$ on $\TTd$  represented as a B-spline series 
\eqref{InverseThm:Representation}
belongs to $B_{p, \theta}^r$ and 
\begin{equation}\nonumber
    \|f\|_{B_{p, \theta}^r} 
\ \ll \
 \biggl( \sum_{\bk \in \ZZdp} \
2^{r|\bk|_1\theta}\|q_\bk\|^{\theta}_p\biggl)^{1/\theta}
\end{equation}
with the sum over $\bk$ changing to the supremum when $\theta = \infty$, whenever the right hand side is finite.
\end{theorem}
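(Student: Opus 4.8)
The plan is to mirror the proofs for non-periodic Besov spaces in \cite{Di09}--\cite{Di13}, \cite{DU14}; I describe the structure and indicate where the hypothesis on $r$ enters. Write
\[
A:=\Big(\sum_{\bk\in\ZZdp}2^{r|\bk|_1\theta}\|q_\bk\|_p^\theta\Big)^{1/\theta}
\]
(the supremum over $\bk$ when $\theta=\infty$), set $b_\bk:=2^{r|\bk|_1}\|q_\bk\|_p$, and assume $A<\infty$. Since $r>0$, writing $\|q_\bk\|_p=2^{-r|\bk|_1}b_\bk$ and using H\"older's inequality together with the summability of the geometric weights $2^{-r|\bk|_1}$ (for $p<1$ or $\theta<1$ one invokes instead the $\min(p,1)$-, resp.\ $\theta$-subadditivity of the relevant quasi-norms) yields $\sum_\bk\|q_\bk\|_p\ll A$. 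Hence the series converges in $L_p$ to the given $f$ and $\|f\|_p\ll A$; this already disposes of the summand $u=\varnothing$ in $\|f\|_{B_{p,\theta}^r}=\sum_{u\subset[d]}|f|_{B_{p,\theta}^{r,u}}$, so it remains to bound $|f|_{B_{p,\theta}^{r,u}}$ for $\varnothing\neq u\subset[d]$.

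The crucial ingredient is a Bernstein-type bound for a single B-spline layer: for every $\bk\in\ZZdp$, every finite combination $g=\sum_{\bs\in I(\bk)}a_{\bk,\bs}N_{\bk,\bs}$, every $u\subset[d]$ and every $\bh\in\RRd$,
\begin{equation*}
\big\|\Delta_\bh^{\ell,u}(g)\big\|_p\ \ll\ \prod_{i\in u}\min\!\big(1,(2^{k_i}|h_i|)^{\lambda}\big)\,\|g\|_p,\qquad \lambda:=\ell-1+\tfrac1p .
\end{equation*}
By the tensor-product structure of $\Delta_\bh^{\ell,u}$ and of $N_{\bk,\bs}$ this reduces to the one-variable statement. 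There, when $2^k|h|\gtrsim1$ one uses only the trivial bound $\|\Delta_h^\ell g\|_p\le2^\ell\|g\|_p$. When $2^k|h|$ is small one uses that $g$ is a piecewise polynomial of degree $\le\ell-1$ with breakpoints on a mesh of size $\asymp2^{-k}$ and that $\Delta_h^\ell$ annihilates $\Pp_{\ell-1}$: thus $\Delta_h^\ell g$ is supported in the $O(|h|)$-neighbourhoods of the breakpoints, and there its size is $\ll|h|^{\ell-1}$ times the jump of $g^{(\ell-1)}$, which by Markov's inequality on an adjacent cell is $\ll2^{(\ell-1)k}\|g\|_{L_\infty(\text{cell})}\ll2^{(\ell-1)k}2^{k/p}\|g\|_{L_p(\text{cell})}$ (Nikol'skii's inequality on a polynomial over an interval of length $\asymp2^{-k}$). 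Raising to the $p$th power and summing over breakpoints produces the factor $(2^k|h|)^{(\ell-1)p+1}$, i.e.\ the gain $(2^k|h|)^{\lambda}$.

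Now fix $u\subset[d]$. Discretizing the integral in the definition of $|f|_{B_{p,\theta}^{r,u}}$ over dyadic boxes in the variables $t_i$, $i\in u$, and using monotonicity of $\omega_\ell^u(f,\cdot)_p$ gives
\[
|f|_{B_{p,\theta}^{r,u}}\ \asymp\ \Big(\sum_{\bn}2^{r\theta\sum_{i\in u}n_i}\,\omega_\ell^u\big(f,(2^{-n_i})_{i\in u}\big)_p^\theta\Big)^{1/\theta},
\]
the sum over $\bn$ with $n_i\ge0$ for $i\in u$. Applying $\Delta_\bh^{\ell,u}$ term-by-term to $f=\sum_\bk q_\bk$ and inserting the layer estimate gives, with $\phi(m):=2^{rm}\min(1,2^{-\lambda m})$,
\begin{equation*}
2^{r\sum_{i\in u}n_i}\,\omega_\ell^u\big(f,(2^{-n_i})_{i\in u}\big)_p\ \ll\ \sum_{\bk\in\ZZdp}\Big(\prod_{i\in u}\phi(n_i-k_i)\Big)\Big(\prod_{i\notin u}2^{-rk_i}\Big)\,b_\bk .
\end{equation*}
The hypothesis on $r$ is used exactly here: it gives $0<r<\lambda$ (and $r<\ell$, needed for $B_{p,\theta}^r$ to be defined), so $\phi$ and $\phi^\theta$ are summable on $\ZZ$. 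Summing out the coordinates $i\notin u$ by H\"older's inequality (possible since $r>0$), and then convolving in the coordinates $i\in u$ against the kernels $\phi$, a standard convolution inequality for $\theta$-summable sequences yields $|f|_{B_{p,\theta}^{r,u}}\ll A$, with the evident modifications when $\theta=\infty$ or $\theta<1$. Summing over the finitely many $u\subset[d]$ gives $\|f\|_{B_{p,\theta}^r}\ll A$. I expect the main obstacle to be the Bernstein-type layer estimate, and in particular pinning down the exponent: a mixed $\ell$th difference of an order-$\ell$ B-spline sum gains only $(2^{k_i}|h_i|)^{\ell-1+1/p}$ per coordinate, not $(2^{k_i}|h_i|)^{\ell}$, because $g^{(\ell-1)}$ is merely bounded and $\Delta_h^\ell g$ therefore concentrates on $O(|h|)$-neighbourhoods of the breakpoints; this is precisely what caps the admissible smoothness. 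The remainder is bookkeeping — carrying the one-dimensional cell decomposition through the tensor product uniformly in the coordinates of $u$, and separating the two regimes of $2^{k_i}|h_i|$ coordinatewise so that the resulting multiparameter sum is a genuine convolution.
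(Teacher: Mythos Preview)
Your approach is correct and is precisely what the paper prescribes: the paper gives no proof of this theorem but states that it ``can be proven in the same way as for non-periodic Besov spaces'' in \cite{Di09}--\cite{Di13}, \cite{DU14}, and your sketch (single-layer Bernstein estimate $\|\Delta_\bh^{\ell,u}g\|_p\ll\prod_{i\in u}\min(1,(2^{k_i}|h_i|)^{\lambda})\|g\|_p$ followed by dyadic discretization and a discrete convolution/Hardy-type summation) is exactly that argument. One caveat on the exponent: your computation $\lambda=\ell-1+1/p$ is what the argument actually delivers for B-splines of order~$\ell$, yielding the range $0<r<\min\{\ell,\ell-1+1/p\}$; the printed hypothesis $r<\min\{2\ell,2\ell-1+1/p\}$ is a carry-over from the cited papers, in which the B-spline has order~$2\ell$ (there written $2r$), so do not try to squeeze the extra factor of~$2$ out of your estimate.
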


\begin{corollary} 
Let $0 < p, \theta \le \infty$ and  $1/p < r < \min\{2\ell, 2\ell - 1 + 1/p\}$. 
Then a  periodic function $f \in B_{p, \theta}^r$ can be represented
by the B-spline series \eqref{eq:B-splineRepresentation}
satisfying the relation
\begin{equation} \nonumber
\biggl( \sum_{\bk \in \ZZdp} \
2^{r|\bk|_1\theta}\|q_\bk(f)\|^{\theta}_p\biggl)^{1/\theta} 
\ \asymp \ 
\|f\|_{B_{p, \theta}^r}
\end{equation}
with the sum over $\bk$ changing to the supremum when $\theta = \infty$.
\end{corollary}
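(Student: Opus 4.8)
The plan is to deduce the corollary directly by combining the two immediately preceding theorems — the direct and the inverse theorem on B-spline quasi-interpolation sampling representation in the Besov spaces $B_{p,\theta}^r$. The first step is purely bookkeeping: the hypothesis $1/p < r < \min\{2\ell,\, 2\ell - 1 + 1/p\}$ is exactly the intersection of the range $1/p < r < 2\ell$ required by the direct theorem and the range $0 < r < \min\{2\ell,\, 2\ell - 1 + 1/p\}$ required by the inverse theorem (the binding upper constraint being the smaller one, $r < \min\{2\ell,\,2\ell-1+1/p\}$), so both theorems are simultaneously applicable under the stated assumptions.

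Fix $f \in B_{p,\theta}^r$. By the direct theorem, $f$ is represented by the B-spline series \eqref{eq:B-splineRepresentation} with the coefficient functionals $c_{\bk,\bs}(f)$ and component functions $q_\bk(f)$ furnished by the quasi-interpolation construction, this series converges to $f$ in the norm of $B_{p,\theta}^r$, and
\begin{equation*}
\biggl( \sum_{\bk \in \ZZdp} 2^{r|\bk|_1\theta}\,\|q_\bk(f)\|^{\theta}_p\biggr)^{1/\theta}
\ \ll \
\|f\|_{B_{p,\theta}^r},
\end{equation*}
with the sum replaced by the supremum when $\theta = \infty$. In particular, the left-hand side here is finite. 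This representation of $f$ is of the admissible input form \eqref{InverseThm:Representation} (take $c_{\bk,\bs} = c_{\bk,\bs}(f)$ and $q_\bk = q_\bk(f)$), and the quantity on the right-hand side of the inverse estimate is finite by the bound just displayed; hence the inverse theorem applies to $f$ and gives
\begin{equation*}
\|f\|_{B_{p,\theta}^r}
\ \ll \
\biggl( \sum_{\bk \in \ZZdp} 2^{r|\bk|_1\theta}\,\|q_\bk(f)\|^{\theta}_p\biggr)^{1/\theta},
\end{equation*}
again with the supremum replacing the sum when $\theta = \infty$. Combining the two displayed inequalities yields the asserted equivalence $\asymp$, and the corollary follows.

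I do not expect any genuine obstacle here; the corollary is a formal consequence of the two theorems. The only points deserving a moment's attention are that the B-spline series delivered by the direct theorem is literally of the form \eqref{InverseThm:Representation} accepted by the inverse theorem — which is immediate — and that the finiteness hypothesis required to invoke the inverse theorem is automatically met, precisely because the direct theorem already bounds the relevant quasi-norm by $\|f\|_{B_{p,\theta}^r} < \infty$.
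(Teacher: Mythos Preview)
Your proposal is correct and matches the paper's intent: the corollary is stated there without proof, as the immediate consequence of combining the direct and inverse Besov-space theorems exactly as you do. Your check that the hypothesis $1/p < r < \min\{2\ell,\,2\ell-1+1/p\}$ is the intersection of the two theorems' ranges, and that the finiteness needed for the inverse theorem is supplied by the direct one, is exactly right.
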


\section{Sampling recovery}
\label{Sampling recovery}

For $m \in {\ZZ}_+$, we define the operator $R_m$ by 
\begin{equation*}
R_m(f) 
:= \ 
\sum_{|\bk|_1 \le m} q_\bk(f)
\ = \
\sum_{|\bk|_1 \le m} \ \sum_{\bs \in I(\bk)} c_{\bk,\bs}(f)\, N_{\bk,\bs}.
\end{equation*} 
For functions $f$ on $\TTd$, $R_m$ defines the linear sampling algorithm 
on the Smolyak grid $G^d(m)$ 
\begin{equation*} 
R_m(f) 
\ = \ 
S_n(\bY_n,\Phi_n,f) 
\ = \ 
\sum_{\by \in G^d(m)} f(\by) \psi_{\by}, 
\end{equation*} 
where $n := \ |G^d(m)|$, $\bY_n:= \{\by \in G^d(m)\}$, $\Phi_n:= \{\varphi_{\by}\}_{\by \in G^d(m)}$  and for 
$\by =  2^{-\bk} \bs$,  $\varphi_\by$ are explicitly constructed as linear combinations of at most 
at most $m_0$ B-splines $N_{\bk,\bj}$ for some $m_0 \in \NN$ which is independent of $\bk,\bs,m$ and $f$.  

\begin{theorem} \label{thm[|f -  R_m(f)|_p<]}
Let $1 < p,q < \infty$ and $\max(\frac{1}{p},\frac{1}{2}) < r < \ell$.
Then we have
\begin{equation} 
\big\|f -  R_m(f) \big\|_q
\ \ll \
\|f\|_{\Wrp} \times
\begin{cases}
2^{-rm} m^{(d-1)/2}, \ & p \ge q, \\
2^{-(r - 1/p + 1/q)m} \, \ & p < q,
\end{cases}
\quad  \forall f \in \Wrp. 
\end{equation}
\end{theorem}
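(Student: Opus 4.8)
The plan is to estimate $\|f - R_m(f)\|_q$ by writing the tail $f - R_m(f) = \sum_{|\bk|_1 > m} q_\bk(f)$ and controlling its $L_q$-norm through the Littlewood-Paley-type norm equivalence \eqref{DirectIneq}. First I would recall that for a B-spline block $q_\bk(f) = \sum_{\bs \in I(\bk)} c_{\bk,\bs}(f) N_{\bk,\bs}$, the functions $N_{\bk,\bs}$ have pairwise (essentially) disjoint supports for fixed $\bk$, so one has the one-sided inequality $\|q_\bk(f)\|_q \ll 2^{(1/p-1/q)|\bk|_1} \|q_\bk(f)\|_p$ when $p < q$ by a standard Nikol'skii-type inequality for B-spline sums on dyadic grids (gaining a factor $2^{|\bk|_1 (1/p - 1/q)}$), while for $p \ge q$ one just has $\|q_\bk(f)\|_q \ll \|q_\bk(f)\|_p$ on the torus. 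These elementary block estimates are the workhorse.

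Next, for the case $p \ge q$, I would use the triangle inequality in $L_q$ together with Hölder/Cauchy-Schwarz over the index set $\{|\bk|_1 = j\}$ (which has cardinality $\asymp j^{d-1}$): writing $\|f - R_m(f)\|_q \le \sum_{j > m} \|\sum_{|\bk|_1 = j} q_\bk(f)\|_q$, then using that the $q_\bk(f)$ with $|\bk|_1 = j$ are "almost orthogonal" / overlap boundedly, bound $\|\sum_{|\bk|_1 = j} q_\bk(f)\|_q \ll \big\|\big(\sum_{|\bk|_1 = j} |q_\bk(f)|^2\big)^{1/2}\big\|_q \ll \big\|\big(\sum_{|\bk|_1 = j} |q_\bk(f)|^2\big)^{1/2}\big\|_p$. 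Inserting the factor $2^{-r|\bk|_1} = 2^{-rj}$ and applying Cauchy-Schwarz in $\bk$ over the $\asymp j^{d-1}$ terms, this is $\ll 2^{-rj} j^{(d-1)/2} \big\|\big(\sum_{|\bk|_1=j} |2^{r|\bk|_1} q_\bk(f)|^2\big)^{1/2}\big\|_p$. Summing the geometric-times-polynomial series $\sum_{j > m} 2^{-rj} j^{(d-1)/2}$ against $\|f\|_{\Wrp}$ (via \eqref{DirectIneq}) yields the bound $2^{-rm} m^{(d-1)/2} \|f\|_{\Wrp}$.

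For the case $p < q$, the argument is parallel but cleaner: the Nikol'skii gain $2^{(1/p - 1/q)j}$ on each block, combined with $2^{-rj}$, produces the effective rate $2^{-(r - 1/p + 1/q)j}$, and since $r - 1/p + 1/q > 0$ (because $r > 1/p \ge 1/p - 1/q$... more precisely $r > \max(1/p,1/2)$ ensures positivity after noting $1/p - 1/q < 1/p < r$), the series $\sum_{j > m} 2^{-(r-1/p+1/q)j} j^{(d-1)/2}$ is again dominated by its first term up to constants, giving $2^{-(r-1/p+1/q)m}\|f\|_{\Wrp}$; here the polynomial factor $j^{(d-1)/2}$ from the Cauchy-Schwarz over the level set gets absorbed into the constant because the geometric decay is strict, which is why no $m^{(d-1)/2}$ appears in the $p<q$ case.

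The main obstacle I expect is justifying the "almost orthogonality" step $\|\sum_{|\bk|_1 = j} q_\bk(f)\|_q \ll \big\|\big(\sum_{|\bk|_1=j}|q_\bk(f)|^2\big)^{1/2}\big\|_q$ for B-spline (rather than Fourier) blocks — one cannot invoke Littlewood-Paley directly for the $q_\bk$ since they are not frequency-localized in the classical sense. The cleanest route is probably to pass through the Fourier blocks $\delta_\bk$: relate $q_\bk(f)$ to nearby $\delta_{\bk'}(f)$ (the B-spline projection is, up to rapidly decaying tails in $|\bk - \bk'|$, a smoothed Fourier projector), then apply Lemma \ref{lemma[LPtheorem]} and sum the off-diagonal contributions geometrically — exactly the kind of shifting-and-summing device already used in the proofs of Theorems \ref{DirectThm} and \ref{InverseThm}. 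Alternatively, and perhaps more in the spirit of this paper, one combines \eqref{DirectIneq} (to pass from $f$ to the square function in $L_p$) with the maximal-function Lemmas \ref{Hardy-LittlewoodM} and \ref{PeetreM} to go back up to $L_q$; I would try that second approach first since all the needed machinery is already in place.
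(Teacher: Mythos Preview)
Your starting point --- the tail expansion $f - R_m(f) = \sum_{|\bk|_1 > m} q_\bk(f)$ together with Theorem~\ref{DirectThm} --- is exactly right, but you have manufactured a difficulty that the paper avoids entirely. The ``almost orthogonality'' step
\[
\Big\|\sum_{|\bk|_1=j} q_\bk(f)\Big\|_q \ \ll \ \Big\|\Big(\sum_{|\bk|_1=j}|q_\bk(f)|^2\Big)^{1/2}\Big\|_q
\]
is indeed not obvious for B-spline blocks, and your proposed fixes (relating $q_\bk$ to $\delta_{\bk'}$, or invoking maximal functions) would amount to reproving a large part of Theorems~\ref{DirectThm}--\ref{InverseThm}. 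The paper sidesteps this completely.

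For $p \ge q$ the paper reduces to $p=q$ and applies Cauchy--Schwarz \emph{pointwise} with weights $2^{-r|\bk|_1}$:
\[
\Big|\sum_{|\bk|_1>m} q_\bk(f)(\bx)\Big| \ \le \ \Big(\sum_{|\bk|_1>m} 2^{-2r|\bk|_1}\Big)^{1/2}\Big(\sum_{|\bk|_1>m}\big|2^{r|\bk|_1}q_\bk(f)(\bx)\big|^2\Big)^{1/2},
\]
and then takes the $L_p$-norm of both sides. The scalar factor is $\asymp 2^{-rm}m^{(d-1)/2}$ and the remaining square function is bounded by $\|f\|_{\Wrp}$ via \eqref{DirectIneq}. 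No level-by-level triangle inequality, no orthogonality, no summation in $j$ is needed; the $m^{(d-1)/2}$ comes purely from the scalar sum $\sum_{|\bk|_1>m}2^{-2r|\bk|_1}$.

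For $p<q$ the paper does \emph{not} use a blockwise Nikol'skii inequality. Instead it applies the Sobolev-type embedding $\|g\|_q \ll \|g\|_{W^{1/p-1/q}_p}$ (inequality~\eqref{ineq[for p<q]}, from \cite{Be74}) to $g = f - R_m(f)$, and then invokes the \emph{inverse} theorem, Theorem~\ref{InverseThm}, at smoothness $1/p-1/q$ to pass to the square function:
\[
\|f - R_m(f)\|_q \ \ll \ \Big\|\Big(\sum_{|\bk|_1>m}\big|2^{(1/p-1/q)|\bk|_1}q_\bk(f)\big|^2\Big)^{1/2}\Big\|_p \ \le \ 2^{-(r-1/p+1/q)m}\Big\|\Big(\sum_{\bk}\big|2^{r|\bk|_1}q_\bk(f)\big|^2\Big)^{1/2}\Big\|_p.
\]
This is why no polynomial factor appears: the decay $2^{-(r-1/p+1/q)m}$ is extracted directly inside the $\ell_2$-sum, not from a geometric series in $j$. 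Your route via $\|q_\bk\|_q \ll 2^{(1/p-1/q)|\bk|_1}\|q_\bk\|_p$ would require controlling $\big(\sum_\bk 2^{2r|\bk|_1}\|q_\bk(f)\|_p^2\big)^{1/2}$, an $\ell_2(L_p)$ quantity, which does \emph{not} follow from \eqref{DirectIneq} (an $L_p(\ell_2)$ bound) when $p<2$.
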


\begin{proof} Let $f$ be a function in $\Wrp$, since $r > \frac{1}{p}$, $f$ is continuous on $\TTd$ and consequently, we obtain by Lemma~\ref{lemma[representation]}
\begin{equation} \label{remainder}
f -  R_m(f) \ = \sum_{|\bk|_1 > m} \ q_\bk(f) 
\end{equation}
with uniform convergence.

We first consider the case $p \ge q$. Due to the inequality  $\|f\|_q  \le \|f\|_p $, it is sufficient prove this case of the theorem for $p=q$. From \eqref{remainder} and the H\"older inequality and Theorem  \ref{DirectThm} we have
\begin{equation*}
\begin{split}
\big\|f -  R_m(f) \big\|_p
\ &= \
\biggl\|\sum_{|\bk|_1 > m} \ q_\bk(f) \biggl\|_p
\ \le \
\biggl\|\biggl(\sum_{|\bk|_1 > m} 2^{-2r|\bk|_1} \biggl)^{1/2} 
\biggl(\sum_{|\bk|_1 > m} \big|2^{r|\bk|_1}  q_\bk(f) \big|^2 \biggl)^{1/2}\biggl\|_p
\\[1ex] 
\ &\le \
\biggl(\sum_{|\bk|_1 > m} 2^{-2r|\bk|_1} \biggl)^{1/2} 
\biggl\|\biggl(\sum_{\bk \in \ZZdp} \big|2^{r|\bk|_1}  q_\bk(f) \big|^2 \biggl)^{1/2}\biggl\|_p
\ \ll \
2^{-rm} m^{(d-1)/2}\,\|f\|_{\Wrp}.
\end{split}  
\end{equation*}
We next consider the case $p < q$. From \cite[Lemma 3]{Be74} one can prove the inequality
\begin{equation} \label{ineq[for p<q]}
\|f\|_q
\ \ll \
\|f\|_{W^{1/p - 1/q}_p}, \quad \forall f \in W^{1/p - 1/q}_p.
\end{equation}
Hence, by \eqref{remainder}, Theorems  \ref{InverseThm} and \ref{DirectThm} we derive that
\begin{equation*}
\begin{split}
\big\|f -  R_m(f) \big\|_q
\ &\ll \
\biggl\|\sum_{|\bk|_1 > m} \ q_\bk(f) \biggl\|_{W^{1/p - 1/q}_p}
\ \ll \
\biggl\|\biggl(\sum_{|\bk|_1 > m} \big|2^{(1/p - 1/q)|\bk|_1}  q_\bk(f) \big|^2 \biggl)^{1/2}\biggl\|_p
\\[1ex] 
\ &\le \
2^{-(r - 1/p + 1/q)m}
\biggl\|\biggl(\sum_{\bk \in \ZZdp} \big|2^{r|\bk|_1}  q_\bk(f) \big|^2 \biggl)^{1/2}\biggl\|_p
\ \ll \
2^{-(r - 1/p + 1/q)m}\,\|f\|_{\Wrp}.
\end{split}  
\end{equation*}
The theorem is completely proven.
\end{proof}

Denote by $\Urp$ the unit ball in the space $\Wrp$.

\begin{corollary} \label{corollary[r_n<,p<q]}
Let $1 < p,q < \infty$ and $r > \max(\frac{1}{p},\frac{1}{2})$.
Then we have
\begin{equation} \label{r_n<]}
r_n(\Urp,L_q) 
\ \ll \
\begin{cases}
\biggl(\frac{(\log n)^{d-1}}{n}\biggl)^{r} (\log n)^{(d-1)/2}, \ & p \ge q, \\[1.5ex]
\biggl(\frac{(\log n)^{d-1}}{n}\biggl)^{(r-1/p+1/q)},  \, \ & p < q.
\end{cases}
\end{equation}
\end{corollary}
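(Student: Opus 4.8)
The plan is to deduce Corollary \ref{corollary[r_n<,p<q]} directly from Theorem \ref{thm[|f - R_m(f)|_p<]} by a counting argument on the Smolyak grid. First I would recall that the linear sampling algorithm $R_m(\cdot)$ uses exactly the function values $f(\by)$ for $\by \in G^d(m)$, so it is of the form \eqref{def[L_n]} with $n = |G^d(m)|$. Hence, for any $m$, taking $\bX_n$ to be the Smolyak grid $G^d(m)$ and $\Phi_n$ the corresponding family of B-spline combinations $\{\psi_\by\}$, we get from the definition of $r_n(\Urp,L_q)$ the estimate
\begin{equation*}
r_{n}(\Urp,L_q) \ \le \ \sup_{f \in \Urp} \|f - R_m(f)\|_q \ \ll \
\begin{cases}
2^{-rm} m^{(d-1)/2}, & p \ge q, \\
2^{-(r-1/p+1/q)m}, & p < q,
\end{cases}
\end{equation*}
where $n = |G^d(m)|$. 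The right-hand side comes verbatim from Theorem \ref{thm[|f - R_m(f)|_p<]} since $\|f\|_{\Wrp} \le 1$ on $\Urp$.

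Next I would insert the standard cardinality estimate for the Smolyak grid, namely $|G^d(m)| \asymp 2^m m^{d-1}$ as $m \to \infty$, which is classical (it counts the lattice points $2^{-\bk}\bs$ with $|\bk|_1 = m$, $\bs \in I(\bk)$, giving $\sum_{|\bk|_1 = m} 2^{|\bk|_1} = 2^m \binom{m+d-1}{d-1} \asymp 2^m m^{d-1}$). From $n \asymp 2^m m^{d-1}$ one solves for $2^{-m}$: taking logarithms gives $m \asymp \log n$, and hence $2^{-m} \asymp m^{d-1}/n \asymp (\log n)^{d-1}/n$. Substituting this into the two cases of the displayed bound and replacing the remaining factor $m^{(d-1)/2} \asymp (\log n)^{(d-1)/2}$ in the case $p \ge q$ yields exactly \eqref{r_n<]}.

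The only genuine subtlety is that $r_n(\Urp,L_q)$ is defined for \emph{every} natural number $n$, whereas $|G^d(m)|$ only takes the discrete values $n_m := |G^d(m)| \asymp 2^m m^{d-1}$. I would handle this by a monotonicity-in-$n$ argument: $r_n$ is nonincreasing in $n$, so for an arbitrary $n$ choose the largest $m$ with $n_m \le n$; then $n_{m+1} > n$, but $n_{m+1} \asymp n_m$ (consecutive Smolyak cardinalities differ only by a bounded factor, since $2^{m+1}(m+1)^{d-1} / (2^m m^{d-1}) \to 2$), so $n \asymp n_m \asymp 2^m m^{d-1}$, and the substitution above goes through with the implied constants only depending on $d,p,q,r$. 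This step, though routine, is the one place where care is needed; everything else is a direct appeal to Theorem \ref{thm[|f - R_m(f)|_p<]} and the definition of $r_n(\Urp,L_q)$.
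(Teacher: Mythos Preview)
Your proposal is correct and follows exactly the paper's approach: the paper's proof is a single sentence stating that the corollary is ``directly derived from'' Theorem~\ref{thm[|f -  R_m(f)|_p<]} ``by considering special values of $n = |G^d(m)| \asymp 2^m m^{d-1}$,'' which is precisely the counting-and-substitution argument you spell out. Your added care about interpolating to arbitrary $n$ via the monotonicity of $r_n$ and the bounded ratio $n_{m+1}/n_m$ is a standard detail the paper leaves implicit.
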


\begin{proof}
 This corollary \eqref{r_n<]} is directly derived from  \eqref{thm[|f -  R_m(f)|_p<]} by considering special values of $n = |G^d(m)| \asymp 2^m m^{d-1}$. 
\end{proof}

\begin{corollary} \label{cor[r_n^s><]}
Let $1 < p,q < \infty$ and $r > \max(\frac{1}{p},\frac{1}{2})$.
Then we have
\begin{equation} \label{r_n^s><]}
r_n^s(\Urp,L_q) 
\ \asymp \
\begin{cases}
\biggl(\frac{(\log n)^{d-1}}{n}\biggl)^{r} (\log n)^{(d-1)/2}, \ & p \ge q, \\[1.5ex]
\biggl(\frac{(\log n)^{d-1}}{n}\biggl)^{(r-1/p+1/q)},  \, \ & p < q.
\end{cases}
\end{equation}
\end{corollary}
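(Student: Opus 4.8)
The plan is to prove the upper and lower bounds separately. The upper bound is immediate: by the definition \eqref{def:r^s_n} of $r^s_n(\Urp,L_q)$, the linear sampling algorithm $R_m$ on the Smolyak grid $G^d(m)$ is an admissible competitor, and by Theorem \ref{thm[|f -  R_m(f)|_p<]} together with the cardinality estimate $n := |G^d(m)| \asymp 2^m m^{d-1}$ one obtains exactly the two right-hand sides of \eqref{r_n^s><]} after solving for $2^{-m}$ in terms of $n$ (as in the proof of Corollary \ref{corollary[r_n<,p<q]}). Since $r_n^s(\Urp,L_q) \le r_n(\Urp,L_q)$ trivially, the upper bound part of \eqref{r_n^s><]} follows from \eqref{r_n<]}. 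Thus the whole content of the corollary beyond Corollary \ref{corollary[r_n<,p<q]} is the matching lower bound.

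For the lower bound I would construct, for each $m$, a family of test functions supported on a single dyadic level and use the inverse theorem (Theorem \ref{InverseThm}) to control their $\Wrp$-norm from above, so that they lie (up to a constant) in $\Urp$. Concretely, fix $m$ and consider functions of the form $g = \sum_{|\bk|_1 = m'} \sum_{\bs \in I(\bk)} c_{\bk,\bs} N_{\bk,\bs}$ for a suitable level $m' \asymp m$ slightly larger than the one determined by the grid $G^d(m)$, with the coefficients $c_{\bk,\bs}$ chosen (using a pigeonhole/linear-algebra argument, or a random-sign argument) so that every linear sampling algorithm $S_m(\Phi_m,\cdot)$ based on the $n$ points of $G^d(m)$ annihilates a nontrivial subfamily; this is the standard device of having more free B-spline coefficients at level $m'$ than there are sample points, so the sampling operator has a large kernel. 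For such $g$ in the kernel, $\|g - S_m(\Phi_m,g)\|_q = \|g\|_q$, and I would lower-bound $\|g\|_q$ by a discrete $\ell_q$-type norm of the coefficients (using the local supports and near-disjointness properties of the $N_{\bk,\bs}$ at a fixed level, together with $2^k \int_\TT N_{k,s} \ge C$), while Theorem \ref{InverseThm} gives $\|g\|_{\Wrp} \ll 2^{rm'} m'^{(d-1)/2}$-type control; balancing the number of levels $|\{\bk : |\bk|_1 = m'\}| \asymp m'^{d-1}$ against the $2^{m'}$ B-splines per level reproduces the factor $(\log n)^{d-1}/n$ and, in the case $p \ge q$, the extra $(\log n)^{(d-1)/2}$ coming from the $\ell_2$-sum over the $\asymp m^{d-1}$ diagonal levels in the Littlewood--Paley square function; in the case $p < q$ one instead uses the embedding \eqref{ineq[for p<q]} in the reverse direction on a single block to produce the exponent $r - 1/p + 1/q$.

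The main obstacle I anticipate is the lower bound's dependence structure: one must choose the test family so that it simultaneously (i) has controlled $\Wrp$-norm via the inverse theorem, which forces the $\ell_2$-over-$\bk$ combination of the blocks, and (ii) is not well-approximated in $L_q$ by \emph{any} $S_m$ on the \emph{given} Smolyak grid, which requires a counting argument comparing $\dim$(span of level-$m'$ B-splines) $\asymp 2^{m'} m'^{d-1}$ with $|G^d(m)| \asymp 2^m m^{d-1}$ and choosing $m'$ just large enough that the former exceeds the latter by a fixed factor. Making (i) and (ii) compatible while keeping the constants independent of $m$ is where the care is needed; all the analytic inputs (near-orthogonality of B-splines at a level, the norm equivalences of Theorems \ref{DirectThm} and \ref{InverseThm}, the maximal-function lemmas) are already available from the earlier sections, so this step is combinatorial-geometric rather than analytic. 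I expect this is essentially the argument indicated in the Introduction, where the lower bound is attributed to "construction of test functions which is based in the inverse theorem of sampling representation."
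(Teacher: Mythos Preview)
Your upper-bound argument via $R_m$ and Theorem \ref{thm[|f -  R_m(f)|_p<]} is correct and matches the paper. (Note, however, that your added sentence ``$r_n^s(\Urp,L_q)\le r_n(\Urp,L_q)$ trivially'' has the inequality backwards: restricting the infimum to Smolyak grids gives $r_n^s\ge r_n$, so the upper bound for $r_n^s$ does \emph{not} follow from \eqref{r_n<]}; it follows, as you correctly said first, because $R_m$ is itself a Smolyak-grid algorithm.)

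For the lower bound the paper's route is considerably more direct than yours and avoids the gap in your sketch. The paper uses the structural fact that for $|\bk|_1=m$ and $\bs\in I^*(\bk)$ (the index set in \eqref{[I^*(bk)]}) the B-splines $N_{\bk,\bs}$ already vanish at every point of $G^d(m)$; hence for \emph{any} $\Phi_m$ one has $S_m(\Phi_m,f)=0$ whenever $f|_{G^d(m)}=0$, and it suffices to exhibit explicit such $f\in\Urp$ with large $\|f\|_q$. For $p\ge q$ the paper takes
\[
g_1 \ = \ C_1\,2^{-rm} m^{-(d-1)/2}\sum_{|\bk|_1=m}\ \sum_{\bs\in I^*(\bk)} N_{\bk,\bs},
\]
checks $g_1\in\Urp$ via Theorem \ref{InverseThm} together with the pointwise bound $\bigl|\sum_{\bs\in I^*(\bk)} N_{\bk,\bs}\bigr|\le 1$, and reads off $\|g_1\|_q\ge\|g_1\|_1\gg 2^{-rm}m^{(d-1)/2}$. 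For $p<q$ it takes a \emph{single} B-spline $g_2 = C_2\,2^{-(r-1/p)m}N_{\bk^*,\bs^*}$ and uses $\|N_{\bk^*,\bs^*}\|_q\asymp 2^{-m/q}$ directly.

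Your proposal to pass to a higher level $m'>m$ and locate kernel elements by a pigeonhole/dimension-count is therefore unnecessary, and as written it has a real gap: the linear-algebra step produces \emph{some} nonzero $g$ in the kernel but gives no control over which coefficients survive, so the claimed lower bound on $\|g\|_q$ ``by a discrete $\ell_q$-type norm of the coefficients'' does not follow without an additional extremal or averaging argument that you have not supplied. Also, for $p<q$ you cannot ``use the embedding \eqref{ineq[for p<q]} in the reverse direction'' --- that inequality goes only one way; the correct device is simply the explicit $L_q$-scaling of a single bump.
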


\begin{proof} We fix an even number $\ell = 2^\nu$ for some $\nu \in \NN$ such that $r < \ell - 1$.
Consider the operator $R_{m+d\nu}$ constructed on the B-splines of order $\ell$. It is a sampling algorithm on the grid $G^d(m)$. The upper bounds of \eqref{r_n^s><]} is directly derived from  \eqref{thm[|f -  R_m(f)|_p<]} and the relations $n \asymp |G^d(m)| \asymp 2^m m^{d-1}$ for the largest $m$ such that $|G^d(m)| \le n$.

To prove the lower bounds, based on the obvious inequality
\begin{equation} \label{[r^s_n>]}
r^s_n(\Urp,L_q) 
\ \ge \ \inf_{|G^d(m)| \le n} \
\sup_{f \in \Urp: \ f(\by) = 0, \ \by \in G^d(m)} \, \|f \|_q,
\end{equation} 
we will construct test functions $g \in \Urp$ with $g(\by) = 0, \ \forall \by \in G^d(m)$, and then estimate from below the norm $\|g\|_q$. Take the index set $I^*(\bk)$ as in \eqref{[I^*(bk)]} and consider the test function
\begin{equation} \nonumber
g_1
\ := \
C_1 2^{- rm} m^{-(d-1)/2}  \sum_{|\bk|_1 = m} \sum_{\bs \in \in I^*(\bk)} N_{\bk,\bs}
\end{equation}
with a constant $C_1$. Here $N_{\bk,\bs}$ are the $d$-variate periodic $B$-splines of $\ell$.
By the construction one can verify that  $g_1(\by) = 0, \ \forall \by \in G^d(m)$.
By applying Theorem \ref{InverseThm} and the inequality 
\begin{equation} \label{g_1}
\biggl|\sum_{\bs \in \in I^*(\bk)} N_{\bk,\bs}(\bx)\biggl|
\ \le \
1, \quad \forall \bx \in \TTd,
\end{equation}
we can see that $g_1 \in \Urp$ for some properly chosen value of $C_1$. Hence, by \eqref{[r^s_n>]}
\begin{equation} \nonumber
\begin{split}
r^s_n(\Urp,L_q) 
\  &\ge \ \|g_1\|_q \ \ge \|g_1\|_1
\ \gg \
  2^{-rm} m^{(d-1)/2} \\
\ &\asymp \
\biggl(\frac{(\log n)^{d-1}}{n}\biggl)^{r} (\log n)^{(d-1)/2}.
\end{split}
\end{equation}
This proves the lower bound of the case $p \ge q$. For the case $p < q$, we take a $\bk^*$ with 
$|\bk^*|_1 = m$ and a $\bs^* \in I^*(\bk)$, and consider the test function
\begin{equation} \nonumber
g_2
\ := \
C_2 2^{- (r-1/p)m} N_{\bk^*,\bs^*}
\end{equation}
with a constant $C_2$. Similarly to the function $g_1$, we have $g_2(\by) = 0, \ \forall \by \in G^d(m)$,
and $g_2 \in \Urp$ for some properly chosen value of $C_2$. Hence, by \eqref{[r^s_n>]} we obtain
\begin{equation*}
r^s_n(\Urp,L_q) 
\  \ge \ \|g_2\|_q
\  \gg \
 2^{-(r - 1/p + 1/q)m}
 \ \asymp \
\biggl(\frac{(\log n)^{d-1}}{n}\biggl)^{(r-1/p+1/q)}
\end{equation*}
which proves the lower bound of the case $p < q$.
\end{proof}

From Theorem~\ref{thm[|f -  R_m(f)|_p<]} and the proof of the lower bounds in Corollary~\ref{cor[r_n^s><]}
we also obtain

\begin{corollary} \label{cor[|f -  R_m(f)|_p><]}
Let $1 < p,q < \infty$ and $\max(\frac{1}{p},\frac{1}{2}) < r < \ell$.
Then we have
\begin{equation} \label{[|f -  R_m(f)|_p<]}
\sup_{f \in \Urp} \big\|f -  R_m(f) \big\|_q
\ \asymp \
\begin{cases}
2^{-rm} m^{(d-1)/2}, \ & p \ge q, \\
2^{-(r - 1/p + 1/q)m} \, \ & p < q.
\end{cases}
\end{equation}
\end{corollary}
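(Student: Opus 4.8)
The plan is to combine the sharp two-sided estimate for the fixed algorithm $R_m$ from Theorem~\ref{thm[|f - R_m(f)|_p<]} with the lower-bound constructions already carried out inside the proof of Corollary~\ref{cor[r_n^s><]}. Since \eqref{[|f - R_m(f)|_p<]} is an $\asymp$ statement, I need both an upper and a lower bound for $\sup_{f \in \Urp}\|f - R_m(f)\|_q$, where now $m$ is the parameter of the algorithm itself rather than being tied to a cardinality budget $n$.

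For the upper bound, I would simply take the supremum over $f \in \Urp$ in the inequality of Theorem~\ref{thm[|f - R_m(f)|_p<]}: this gives
\begin{equation*}
\sup_{f \in \Urp}\big\|f - R_m(f)\big\|_q
\ \ll \
\begin{cases}
2^{-rm} m^{(d-1)/2}, & p \ge q,\\[1ex]
2^{-(r - 1/p + 1/q)m}, & p < q,
\end{cases}
\end{equation*}
since every $f \in \Urp$ has $\|f\|_{\Wrp} \le 1$. This requires the hypothesis $r < \ell$, which is exactly the standing assumption of Corollary~\ref{cor[|f - R_m(f)|_p><]}.

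For the lower bound I would reuse, essentially verbatim, the test functions $g_1$ and $g_2$ built in the proof of Corollary~\ref{cor[r_n^s><]}. The only point to check is that $g_1, g_2$ remain admissible when $\ell < 2\ell-1$ is relaxed back to $r < \ell$ rather than $r < \ell - 1$: here I should pick, as in that proof, an even $\ell' = 2^\nu \ge \ell$ with $r < \ell' - 1$, build $R_{m'}$ and the test functions on B-splines of order $\ell'$ with $m' = m + d\nu$, and observe that $R_{m'}$ is still supported on the grid $G^d(m)$; alternatively one notes that since $g_1(\by)=g_2(\by)=0$ for all $\by \in G^d(m)$ and $R_m$ only uses these sample values, $R_m(g_i)=0$, hence $\|g_i - R_m(g_i)\|_q = \|g_i\|_q$. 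Then the already-established estimates $\|g_1\|_q \ge \|g_1\|_1 \gg 2^{-rm}m^{(d-1)/2}$ (case $p \ge q$) and $\|g_2\|_q \gg 2^{-(r-1/p+1/q)m}$ (case $p < q$), together with $g_i \in \Urp$, give $\sup_{f \in \Urp}\|f - R_m(f)\|_q \ge \|g_i - R_m(g_i)\|_q = \|g_i\|_q \gg$ the desired lower bound.

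The only mild subtlety — and the place where I would be most careful — is the bookkeeping between the order $\ell$ of the B-splines defining $R_m$ in the statement and the order $\ell'$ used to build the test functions: one must make sure the test functions vanish on precisely the grid $G^d(m)$ that $R_m$ samples, so that $R_m$ annihilates them regardless of which B-spline order it is built from. Once that is arranged (and it is, since $g_i$ are finite linear combinations of $N_{\bk,\bs}$ with $|\bk|_1 = m$, which vanish at every point of $G^d(m)$ by the construction recalled in Corollary~\ref{cor[r_n^s><]}), the matching of upper and lower bounds is immediate and the corollary follows.
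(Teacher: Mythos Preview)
Your proposal is correct and follows exactly the approach the paper indicates: the upper bound comes directly from Theorem~\ref{thm[|f -  R_m(f)|_p<]} by taking the supremum over $\Urp$, and the lower bound reuses the test functions $g_1,g_2$ from the proof of Corollary~\ref{cor[r_n^s><]}, which vanish on the sampling grid and hence satisfy $R_m(g_i)=0$. You are in fact more careful than the paper about the discrepancy between the hypothesis $r<\ell$ here and the condition $r<\ell-1$ needed to invoke Theorem~\ref{InverseThm} when verifying $g_1\in\Urp$; your observation that one may build the test functions from B-splines of a larger order $\ell'$ while $R_m$ still annihilates them (since it only sees the zero samples) is the right fix.
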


\begin{theorem} \label{theorem[|f -  R_m(f)|_infty><]}
Let $1 < p < \infty$ and $1/p < r < \ell$.
Then we have
\begin{equation} \label{[|f -  R_m(f)|_infty><]}
\sup_{f \in \Urp} \big\|f -  R_m(f) \big\|_\infty
\ \asymp \
2^{-(r-1/p)m} m^{(d-1)(1-1/p)}.
\end{equation}
\end{theorem}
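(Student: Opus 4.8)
The plan is to prove the two matching bounds in \eqref{[|f -  R_m(f)|_infty><]} separately, using the B-spline quasi-interpolation representation \eqref{eq:B-splineRepresentation} as the common engine, exactly in the spirit of Theorem~\ref{thm[|f -  R_m(f)|_p<]} and Corollary~\ref{cor[r_n^s><]}. For the upper bound, I would start from \eqref{remainder}, namely $f - R_m(f) = \sum_{|\bk|_1 > m} q_\bk(f)$, which is valid since $r > 1/p$ forces $f$ to be continuous. The idea is to pass from the $L_\infty$-norm to an $\ell_1$-type sum over the dyadic blocks: writing $\|f - R_m(f)\|_\infty \le \sum_{|\bk|_1 > m} \|q_\bk(f)\|_\infty$ and then using the Nikol'skii-type embedding $\|q_\bk(f)\|_\infty \ll 2^{|\bk|_1/p}\|q_\bk(f)\|_p$ (the functions $q_\bk(f)$ are, up to the B-spline smoothing, spectrally localized at frequency $\asymp 2^\bk$, so a mixed Nikol'skii inequality applies to the B-spline sums $\sum_{\bs} c_{\bk,\bs}(f) N_{\bk,\bs}$). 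This gives $\|f - R_m(f)\|_\infty \ll \sum_{|\bk|_1 > m} 2^{(1/p - r)|\bk|_1}\bigl(2^{r|\bk|_1}\|q_\bk(f)\|_p\bigr)$, and then Hölder in $\bk$ with exponents $p'$ and $p$ together with Theorem~\ref{DirectThm} yields the factor $\bigl(\sum_{|\bk|_1>m} 2^{-(r-1/p)p'|\bk|_1}\bigr)^{1/p'}\|f\|_{\Wrp}$. Counting the lattice points on each hyperplane $|\bk|_1 = j$ (there are $\asymp j^{d-1}$ of them) and summing the geometric series in $j \ge m$ produces $2^{-(r-1/p)m} m^{(d-1)/p'} = 2^{-(r-1/p)m} m^{(d-1)(1-1/p)}$, as claimed.

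For the lower bound I would imitate the test-function construction in Corollary~\ref{cor[r_n^s><]}, but now tuned to the $L_\infty$ geometry. Take $I^*(\bk)$ as in \eqref{[I^*(bk)]} and set
\[
g \ := \ C\, 2^{-(r - 1/p)m}\, m^{-(d-1)/p} \sum_{|\bk|_1 = m} \varepsilon_\bk \sum_{\bs \in I^*(\bk)} N_{\bk,\bs},
\]
with signs $\varepsilon_\bk = \pm 1$ chosen so that at some fixed point $\bx_0$ (e.g. a point where many of the bump sums $\sum_{\bs} N_{\bk,\bs}$ are simultaneously close to their maximum) all the terms add up constructively, giving $|g(\bx_0)| \gg 2^{-(r-1/p)m} m^{-(d-1)/p} \cdot m^{d-1} = 2^{-(r-1/p)m} m^{(d-1)(1-1/p)}$. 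One checks $g \in \Urp$ by Theorem~\ref{InverseThm}: since $q_\bk(g) = C\,2^{-(r-1/p)m} m^{-(d-1)/p}\varepsilon_\bk \sum_{\bs} N_{\bk,\bs}$ is bounded by $\ll 2^{-(r-1/p)m} m^{-(d-1)/p}$ in $L_\infty$, hence in $L_p$, and is supported only on the $\asymp m^{d-1}$ hyperplane indices, we get $\bigl\|\bigl(\sum_\bk |2^{r|\bk|_1} q_\bk(g)|^2\bigr)^{1/2}\bigr\|_p \ll C\, 2^{-(r-1/p)m} m^{-(d-1)/p}\cdot 2^{rm}\cdot m^{(d-1)/2} \cdot 1$; wait — here one must be careful that the $\ell_2$ sum over the $\asymp m^{d-1}$ nonzero blocks, each with disjoint-support B-spline sums of sup-norm $\le 1$, contributes $m^{(d-1)/2}$ pointwise only if the supports overlap, which they do not for fixed $\bk$ but do across different $\bk$; a direct estimate of the left side of \eqref{InverseIneq} using the local finite overlap of the $N_{\bk,\bs}$ shows it is $\ll C$, so $g \in \Urp$ for $C$ small. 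Finally $g(\by) = 0$ for all $\by \in G^d(m)$ by the same knot-alignment argument as for $g_1$, and then \eqref{[r^s_n>]}-type reasoning — or rather directly $\sup_{f\in\Urp}\|f - R_m(f)\|_\infty \ge \|g - R_m(g)\|_\infty = \|g\|_\infty \ge |g(\bx_0)|$, using $R_m(g) = 0$ since $g$ vanishes on the Smolyak grid — completes the lower bound.

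The main obstacle I anticipate is the mixed Nikol'skii inequality $\|q_\bk(f)\|_\infty \ll 2^{|\bk|_1/p}\|q_\bk(f)\|_p$ for the B-spline sums: it is standard for trigonometric polynomials (and implicit in the Peetre-maximal machinery of Lemmas~\ref{Delta<P}--\ref{PeetreM}), but to apply it to $q_\bk(f) = \sum_{\bs\in I(\bk)} c_{\bk,\bs}(f) N_{\bk,\bs}$ one should either invoke the local structure of the B-splines directly (each $N_{\bk,\bs}$ has $L_\infty$-norm $\asymp 1$ and $L_p$-norm $\asymp 2^{-|\bk|_1/p}$, with bounded overlap, so on each dyadic cell $\|q_\bk(f)\|_{L_\infty(\text{cell})} \ll 2^{|\bk|_1/p}\|q_\bk(f)\|_{L_p(\text{cell})}$) or reduce via the spectral near-localization of $q_\bk(f)$ to the trigonometric case. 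I would carry this out by the first, more elementary route. A secondary technical point is the exact bookkeeping of the power of $m$: one must verify $(d-1)(1 - 1/p) = (d-1)/p'$ and that summing $j^{d-1} 2^{-(r-1/p)p'(j-m)}$ over $j \ge m$ indeed contributes only the polynomial factor $m^{d-1}$ before taking the $1/p'$ power — routine but worth stating cleanly.
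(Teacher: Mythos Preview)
Your upper-bound argument has a real gap for $1 < p < 2$. After the Nikol'skii step you apply H\"older with exponents $p',p$ and then invoke Theorem~\ref{DirectThm}; this amounts to bounding $\bigl(\sum_{\bk}2^{rp|\bk|_1}\|q_\bk(f)\|_p^p\bigr)^{1/p}$ by $\|f\|_{\Wrp}$, i.e.\ using the embedding $\Wrp=F^r_{p,2}\hookrightarrow B^r_{p,p}$. That embedding holds only for $p\ge 2$; for $p<2$ the inclusion goes the other way, and Theorem~\ref{DirectThm} (which controls the $L_p(\ell_2)$ square function, not the $\ell_p(L_p)$ norm) does not give what you need. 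The paper bypasses this by invoking the Jawerth embedding $\Wrp\hookrightarrow B^{r-1/p}_{\infty,p}$ and then the already-known Besov estimate $\sup_{f\in U^{r-1/p}_{\infty,p}}\|f-R_m(f)\|_\infty\ll 2^{-(r-1/p)m}m^{(d-1)(1-1/p)}$ from \cite{Di11}; the Jawerth embedding is exactly the sharp tool that makes the $\ell_p$-summability in $\bk$ work for all $1<p<\infty$.

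Your lower-bound test function is also incorrect as written. With $\Phi_\bk:=\sum_{\bs\in I^*(\bk)}N_{\bk,\bs}$ one has $\|\Phi_\bk\|_p\asymp 1$ (there are $2^{|\bk|_1}$ disjoint bumps each contributing $\asymp 2^{-|\bk|_1}$ to the $p$th-power integral), so
\[
\Bigl\|\Bigl(\sum_{|\bk|_1=m}\bigl|2^{r|\bk|_1}q_\bk(g)\bigr|^2\Bigr)^{1/2}\Bigr\|_p
\;\asymp\;
C\,2^{m/p}m^{-(d-1)/p}\Bigl\|\Bigl(\sum_{|\bk|_1=m}\Phi_\bk^2\Bigr)^{1/2}\Bigr\|_p
\;\ge\;
C\,2^{m/p}m^{-(d-1)/p}\|\Phi_{\bk_0}\|_p
\;\asymp\;C\,2^{m/p}m^{-(d-1)/p},
\]
which blows up with $m$; your ``local finite overlap'' remark cannot save this. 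The standard fix (the construction the paper alludes to via \cite{Te93}) uses a \emph{single} bump $N_{\bk,\bs_\bk}$ per level $\bk$, all centered at a common point $\bx_0$: then $\|N_{\bk,\bs_\bk}\|_p\asymp 2^{-m/p}$, the square-function computation balances correctly, and one recovers the factor $m^{(d-1)(1-1/p)}$ from the $L_p(\ell_2)$ norm of the overlapping single bumps. Your normalization and the constructive-sign idea are right, but the building blocks must be single bumps, not the full tiling sums $\Phi_\bk$.
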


\begin{proof} The upper bound follows from the embedding 
$\Wrp \hookrightarrow B^{r-1/p}_{\infty,p}$ \cite{Ja77} (see also \cite[Lemma 3.7]{DTU16})
and the estimate
\begin{equation}\nonumber
\sup_{f \in  U^{r-1/p}_{\infty,p}} \big\|f -  R_m(f) \big\|_\infty
\ \ll \
2^{-(r-1/p)m} m^{(d-1)(1-1/p)}
\end{equation}
proven in \cite[Theorem 3.1]{Di11}, where $U^{r-1/p}_{\infty,p}$ is the unit ball in 
$B^{r-1/p}_{\infty,p}$.
The lower bound can be proven in a similar way to that of  \cite[Theorem 2.1]{Te93}. 
\end{proof}

\begin{corollary} \label{corollary[r_n,p<q]}
Let $1 < p < q \le 2$  or $2\le p < q < \infty$ and $r > \max(\frac{1}{p},\frac{1}{2})$.
Then we have
\begin{equation}  \label{[r_n,p<q]}
r_n(\Urp,L_q) 
\ \asymp \
\biggl(\frac{(\log n)^{d-1}}{n}\biggl)^{(r-1/p + 1/q)}.
\end{equation}
\end{corollary}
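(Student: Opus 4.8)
The plan is to establish the two matching estimates in \eqref{[r_n,p<q]} separately; the upper bound is already at hand, so the real work is the lower bound, which I would reduce to a known computation of linear widths. For the upper bound, Corollary~\ref{corollary[r_n<,p<q]} in the regime $p<q$ gives, for all $1<p<q<\infty$ and $r>\max(1/p,1/2)$, the estimate $r_n(\Urp,L_q)\ll\bigl((\log n)^{d-1}/n\bigr)^{r-1/p+1/q}$, which covers both ranges $1<p<q\le2$ and $2\le p<q<\infty$; concretely this bound is realized by the Smolyak-grid sampling algorithms $R_m$ of Theorem~\ref{thm[|f -  R_m(f)|_p<]} with $|G^d(m)|\asymp n$.

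For the lower bound, I would introduce the linear $n$-width
\[
\lambda_n(\Urp,L_q)\ :=\ \inf_{T}\ \sup_{f\in\Urp}\|f-Tf\|_q,
\]
where $T$ runs over all bounded linear operators $\Wrp\to L_q$ of rank at most $n$. Since $r>1/p$ we have the continuous embedding $\Wrp\hookrightarrow C(\TTd)$, so every point evaluation $f\mapsto f(\bx)$ is a bounded linear functional on $\Wrp$; hence each linear sampling algorithm $L_n(\bX_n,\Phi_n,f)=\sum_{j=1}^n f(\bx^j)\varphi_j$ is a bounded linear operator $\Wrp\to L_q$ whose range lies in $\operatorname{span}\{\varphi_1,\dots,\varphi_n\}$ and is therefore of rank at most $n$. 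Taking the infimum over $\bX_n$ and $\Phi_n$ yields $r_n(\Urp,L_q)\ge\lambda_n(\Urp,L_q)$. I would then invoke Galeev's computation of the linear widths of Sobolev classes of mixed smoothness \cite{Ga87}: for $1<p<q\le2$ or $2\le p<q<\infty$ and $r>\max(1/p,1/2)$,
\[
\lambda_n(\Urp,L_q)\ \asymp\ \biggl(\frac{(\log n)^{d-1}}{n}\biggr)^{r-1/p+1/q},
\]
and combining this lower estimate with the upper bound above proves \eqref{[r_n,p<q]}.

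The argument carries no essential internal difficulty: once the elementary observation $r_n\ge\lambda_n$ is in place — sampling operators form a subfamily of all linear operators of rank at most $n$ — everything rests on the imported linear-width asymptotics, and no new analytic estimate is needed. The one point that genuinely requires care is understanding \emph{why} one restricts to the two parameter ranges $1<p<q\le2$ and $2\le p<q<\infty$: these are precisely the ranges in which the linear-width lower bound $\bigl((\log n)^{d-1}/n\bigr)^{r-1/p+1/q}$ of \cite{Ga87} matches the $R_m$ upper bound of Theorem~\ref{thm[|f -  R_m(f)|_p<]}; outside them (for instance when $p<2<q$) the linear width, and hence this method, would give a different exponent, so the asserted order is sharp only in these regimes. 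Thus the main (imported) obstacle is Galeev's linear-width lower bound, while the rest is bookkeeping.
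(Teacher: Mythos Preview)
Your proposal is correct and follows essentially the same route as the paper: the upper bound comes from Corollary~\ref{corollary[r_n<,p<q]}, and the lower bound from the inequality $r_n(\Urp,L_q)\ge\lambda_n(\Urp,L_q)$ combined with Galeev's linear-width asymptotics \cite{Ga87}. Your justification of $r_n\ge\lambda_n$ via the embedding $\Wrp\hookrightarrow C(\TTd)$ and your remark on why the parameter ranges $1<p<q\le2$ and $2\le p<q<\infty$ are singled out are welcome elaborations that the paper leaves implicit.
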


\begin{proof}
The upper bound of  \eqref{[r_n,p<q]} already is in Corollary \ref{corollary[r_n<,p<q]}. 
To prove the lower bound we compare the sampling width with the well known linear width which is defined by
\begin{equation} \nonumber 
\lambda_n(\Urp,L_q) 
\ := \ \inf_{\Lambda_n} \  \sup_{f \in W} \, \|f - \Lambda_n(f)\|_q,
\end{equation}
where the infimum is taken over all linear operators $\Lambda_n$ of rank $n$ in the normed space $L_q$.  The lower bound follows from the  inequality
$
r_n(\Urp,L_q)  
\ \ge \
\lambda_n(\Urp,L_q)  
$
and the inequality 
\begin{equation} \label{r_n>}
\lambda_n(\Urp,L_q) 
\ \gg \
\biggl(\frac{(\log n)^{d-1}}{n}\biggl)^{(r-1/p+1/q)}
\end{equation}
proven in \cite{Ga87} (see also \cite{Ga96}).
\end{proof}

Corollary \ref{corollary[r_n,p<q]} has been proven in \cite{ByDuUl14} for the case $2=p < q \le \infty$.

\medskip
\noindent
{\bf Final remarks.}
 All the results in this paper can be in a natural way extended to the Sobolev space 
$W^{\br}_p$ and class  $U^{\br}_p$ of nonuniform mixed smoothness $\br$ with 
$r=r_1 = \cdots = r_\nu < r_{\nu + 1} \le r_{\nu + 2} \le \cdots \le r_d$ by using the same methods and techniques. 
In particular, the direct and inverse Littlewood-Paley-type theorems of B-spline sampling representation for the space  $W^{\br}_p$ hold true, and in the results on asymptotic orders, upper and lower bounds of $r^s_n(U^{\br}_p,L_q)$ and $r_n(U^{\br}_p,L_q)$ the number $d$ is replaced by $\nu$.

The direct and inverse theorems of B-spline sampling representation  for Sobolev spaces of mixed smoothness in 
Section~\ref{Littlewood-Paley-type theorems}
can be also easily extended to Triebel-Lizorkin spaces of mixed smoothness.  With the extended theory of trigonometric sampling representation to Triebel-Lizorkin spaces the authors of \cite{ByUl15} are able to strengthen the results for Sobolev spaces and remove the smoothness restriction $r > \max(\frac{1}{p},\frac{1}{2})$ in the Sobolev context. 

\bigskip
\noindent
{\bf Acknowledgments.}  This work is funded by Vietnam National Foundation for Science and Technology Development (NAFOSTED) under  Grant No. A part of this work was done when the author was working as a research professor at the Vietnam Institute for Advanced Study in Mathematics (VIASM). He  would like to thank  the VIASM  for providing a fruitful research environment and working condition. The author would like to thank Glenn Byrenheid and Tino Ullrich for giving opportunity to read the manuscript~\cite{ByUl16}. He thanks Glenn Byrenheid, Vladimir Temlyakov and Tino Ullrich for useful discussions.

\end{document}